\newcolumntype{M}[1]{>{\centering\arraybackslash}m{#1}}
\newcolumntype{N}{@{}m{0pt}@{}}
\numberwithin{lemma}{section}
\numberwithin{proposition}{section}
\numberwithin{definition}{section}
\numberwithin{example}{section}
\newtheorem{theorem}{Theorem}
\numberwithin{theorem}{section}
\numberwithin{corollary}{section}
\newtheorem{remark}{Remark}
\numberwithin{remark}{section}
\newcommand{\beqn}{\begin{equation} \begin{aligned}}
\newcommand{\eeqn}{\end{aligned}\end{equation}}
\newcommand{\beqnn}{\begin{equation*} \begin{aligned}}
\newcommand{\eeqnn}{\end{aligned}\end{equation*}}
\colorlet{Mycolor1}{green!10!orange!90!}
\colorlet{Mycolor2}{Yellow!80!}
\colorlet{Mycolor3}{green!50!}
\colorlet{Mycolor4}{Red!90!}
\renewcommand*{\@fnsymbol}[1]{\ifcase#1\or *\or $\star$\or$\dagger$\or$\ddagger$\or \else\@arabic{#1}\fi}
\date{}
\title{M-current Induced Bogdanov-Takens Bifurcation and Switching of Neuron Excitability Class\footnote{This research is supported in part by the Natural Sciences and Engineering Research Council of Canada.}}
\newcommand{\specificthanks}[1]{\@fnsymbol{#1}}
\author{Isam Al-Darabsah\footnote{Department of Applied Mathematics, University of Waterloo, Waterloo, ON,  N2L 3G1, Canada.} \textsuperscript{,}\footnote{Email: ialdarabsah@uwaterloo.ca} \and Sue Ann Campbell\textsuperscript{\specificthanks{2}}\textsuperscript{,}\footnote{Email: sacampbell@uwaterloo.ca} }
\begin{document}

\maketitle

\begin{abstract} 
In this work, we consider a general conductance-based neuron model with the inclusion of the acetycholine sensitive, M-current. We study bifurcations in the parameter space consisting of the applied current, $I_{app}$ the maximal conductance of the M-current, $g_M$, and the conductance of the leak current, $g_L$.
We give precise conditions for the model that ensure the existence of a Bogdanov-Takens (BT) point and   show such a point can occur by varying $I_{app}$ and $g_{M}$.
We discuss the case when the BT point becomes a Bogdanov-Takens-Cusp (BTC) point and show that such a point can occur in the three dimensional parameter space.    
The results of the bifurcation analysis are applied to different neuronal models and are verified and supplemented by numerical bifurcation diagrams generated using the
package \textsf{MATCONT}. 
We conclude that there is a transition in the neuronal excitability type organized by the  BT   point and the neuron switches from Class-I to Class-II as conductance of the M-current increases.
\end{abstract}

{\bf Keywords}: Conductance-based models $\cdot$ Bogdanov–Takens bifurcation  $\cdot$ Neuronal excitability $\cdot$ M-current

\section{Introduction}

Neuromodulators are chemicals released by neurons that can alter
the behaviour of individual neurons and large populations of neurons. 
Examples include dopamine, seratonin and acetylcholine.
These chemicals occur widely in the brain and can affect many
types of neurons. The effect of neuromodulators ranges from altering
the membrane properties of individual neurons to altering synaptic
transmission.
 
The M-current is a voltage dependent, noninactivating potassium current, 
which has been shown to occur in many neural types including 
excitatory neurons in the cortex \cite{stiefel2008cholinergic} and inhibitory
neurons in the hippocampus \cite{lawrence2006somatodendritic}. 
Its name arises from the fact that this current is in down-regulated by 
the presence of the neuromodulator acetylcholine through its action on
the muscarinic receptor. At the simplest level, this current reduces firing activity since it is a potassium current \cite{mccormick1993actions,lawrence2006somatodendritic}.
However, this current has been implicated in many aspects of both individual 
cell and network activity.

 Before reviewing the literature on M-current we first recall some terminology. Neurons and neural models are often classified by their membrane excitability class as described by Hodgkin \cite{hodgkin1948} neurons with the Class-I excitability have a continuous 
frequency-current (F/I) curve because they begin repetitive firing with
zero frequency from the resting state.
On the other hand, the frequency-current curve of Class-II neurons is discontinuous because they start firing with non-zero frequency from the resting state 
\cite{izhikevich2007dynamical}.
The phase resetting curve (PRC) describes the effect of stimulation on the phase of an oscillator as a function of the phase at which the stimulus is delivered. At phases where the PRC is positive the phase is advanced, meaning the period of the oscillator is increased by the perturbation. At phases where the PRC is negative the phase is delayed, corresponding to a decrease in the period of the oscillator \cite{canavier2006phase,ermentrout2010mathematical}.  As introduced by Hansel~\cite{hansel1995synchrony} a Type-I PRC is one where an excitatory stimulus produces only phase advances, while in a Type-II PRC either phase advance or phase delay can occur, depending on the phase of the stimulus. For two oscillators with reciprocal excitatory coupling, a Type-I PRC means the coupling cannot synchronize the oscillators, while a Type-I PRC means that the coupling can synchronize the oscillators. For inhibitory coupling the opposite occurs \cite{canavier2006phase,ermentrout2010mathematical}.
 Another important classification of neurons is whether or not they exhibit subthreshold oscillations. Neurons that do exhibit subthreshold oscillations are called resonators, while neurons that do not are called integrators.
 
At the single cell level, the M-current has been shown to affect the neuronal 
excitability \cite{stiefel2008cholinergic,brown2009neural} and resonant properties \cite{hu2002two,prescott2006nonlinear,prescott2008pyramidal}. 
For example,  in  \cite{stiefel2008cholinergic}, the authors recorded from layer II/III pyramidal neurons and determined PRCs. 
Stiefel et al.~\cite{stiefel2008cholinergic} found that down-regulation of slow voltage-dependent potassium currents such as the M-current can switch the PRC from Type-II to Type-I, thus changing the expected synchronization of pairs of coupled neurons.
In a follow-up paper \cite{stiefel2008cholinergic}, they showed for that the M-current could produce the same effect in several
different neural models. The work of \cite{fink2011cellularly} showed that these differences in
PRC type due to M-current modulation translate into differences in synchronization properties 
in networks of model neurons.
The experimental work of \cite{prescott2006nonlinear,prescott2008pyramidal} showed that increased membrane conductance (shunting) could switch a hippocampal pyramidal neuron from an integrator to a resonator. Using a simple model, they attributed this change to the combined effect of shunting (modelled as a leak current) and the M-current. Interpreting the shunt as representing the effect of background synaptic on a neuron, Prescott et al.~\cite{prescott2008pyramidal} concluded that neurons that present as integrators in vitro may act as resonators in vivo.
At the network level the M-current has also been implicated in 
the organization of rhythms in striatal microcircuits. 
In \cite{zhou2018m},  the authors studied an inhibitory neuron model with M-current under forcing from gamma pulses and a sinusoidal current of theta frequency. They found that the M-current expands the phase-locking frequency range of the network, counteracts the slow theta forcing and admits bistability in some parameter range.
In \cite{chartove2020biophysical}, the effects of the M-correct on $\beta$ oscillations  was studied.

In all the studies cited above, the effect of acetylcholine, through
the M-current, was explored in models for specific cells. While this
is important for understanding the behaviour of specific cells and
brain networks, it can be difficult to extract the essential effects of 
the M-current from its interplay with other specific currents in the
models. 
Here we take a different approach and consider the effect
of the M-current in a general conductance-based model. We study
the bifurcations of the model in the parameter space of two parameters
common to any conductance-based model with an M-current: the
applied current and the maximal conductance of the M-current.
We derive necessary and sufficient conditions for the existence of 
two codimension-two bifurcations of the resting equilibrium point:
the Bogdanov-Takens (BT) bifurcation and the Cusp (CP) bifurcation.

The Bogdanov-Takens (BT) bifurcation is associated with 
an equilibrium point that has a zero eigenvalue with algebraic multiplicity 
two and geometric multiplicity one. 
The Cusp bifurcation occurs when three equilibrium points coalesce into one,
and can be thought of as the simultaneous occurrence of two fold bifurcations.
When an equilibrium point simultaneously undergoes a BT and  Cusp bifurcation, 
a Bogdanov-Takens-Cusp (BTC) occurs, which is a codimension three bifurcation.
We show that variation of a third parameter, the leak conductance, can 
lead to a BTC bifurcation point.

In the literature, there are many instances where the presence of 
the BT, Cusp and to a lesser extent the BTC, bifurcations have been shown
to occur in particular conductance-based models. For example, the presence 
of BT and Cusp bifurcations \cite{guckenheimer1993bifurcation} and BTC bifurcation \cite{mohieddine2008chaos} have been shown in the Hodgkin-Huxley model.  In \cite{tsumoto2006bifurcations}, the author showed the existence of BT and Cusp 
bifurcations in Morris-Lecar model \cite{Mor_Lec}.
While in \cite{ermentrout2010mathematical} the BT and Cusp bifurcations 
were shown in the Wang-Buzs\'aki interneuron model \cite{wang1996gamma}. 
The majority of these studies used numerical bifurcation analysis to show 
that these bifurcations
occur as particular parameters are varied with all other parameters fixed
at some specific, biologically relevant values. The prevalence of these
codimension two bifurcations in particular studies, would seem to indicate
that these bifurcations are associated with some underlying structure in 
conductance-based models in general. Indeed, two recent papers give
support to this hypothesis.
The authors in \cite{kirst2015fundamental} considered a general 
conductance-based neuron model and studied the existence of the BTC point 
in the parameter space of the applied current, leak conductance, and 
capacitance.  In \cite{pereira2015bogdanov}, the authors give general 
conditions the existence of the BT bifurcation in any conductance-based
model. Our work builds on these latter two papers and extends them to
the situation where an M-current is present in the model. 

To understand the implications of the co-dimension two bifurcations, we related them to the neural behaviours
described described above. The resonance property of neural models quite simply related to the bifurcation that causes the loss of stability of the resting state when the applied current is increased. If this bifurcation is a Hopf bifurcation the model is a resonator, otherwise it is an integrator. As pointed out by Izhikevich, a Bogdanov-Takens bifurcation can switch the resonator type of a neuron \cite{izhikevich2007dynamical}.
Class I/II excitability was first linked to bifurcations in neural models
by Rinzel and Ermentrout \cite{rinzel1998analysis}.
Rinzel and Ermentrout showed that neuronal models where the  onset of repetitive firing occurs via a saddle-node bifurcation on an invariant circle are Class-I,
while models where the onset occurs via a subcritical Andronov-Hopf bifurcations
 are Class-II. This link can be extended to other types of bifurcations by studying the associated F/I curves.
The excitability class of individual neurons has been linked to the 
synchronization properties of the neuron in a network through the phase resetting curve (PRC). In particular,
it has been shown in certain circumstances that Class-I neurons have
Type-I PRCs \cite{ermentrout1996type}. No conclusive link between Class-II neurons and a particular PRC Type was found in that paper.
More recently, Izhikevich has made a subtly different classification of excitability, based on ramped current inputs as opposed to step current inputs. Izhikevich defines Class I/II excitability based on the bifurcation that causes the loss of stability of the resting state when the current is increased. 
Further, Izhikevich defines Class I/II spiking by the bifurcation that destroys the stable oscillations as the current is decreased \cite{izhikevich2007dynamical}.
A focus for this paper will be on how the presence of a BT point is linked to the emergence
of a Hopf bifurcation and thus could be associated with a change of
oscillation class for a conductance-based neural model.

The paper is organized as follows. 
In the next section, we provide a general conductance-based neuron model
with the inclusion of the $M-$current  and study the existence of the steady-state solutions.
In Sections \ref{sec_BT} and \ref{sec:exist}, we give a complete characterization of the BT bifurcation, provide a condition for the Cusp bifurcation and discuss the existence of Bogdanov-Takens-Cusp (BTC) bifurcation.
In Section \ref{sec_Numerical}, we consider three example models, and
show all three models exhibit the BT, CP and BTC bifurcation points.
We construct bifurcation diagrams using \textsf{MATCONT} to explain possible behaviour of each example and use the numerical solution of each model 
to construct the frequency-current curves.  In Section \ref{sec:Implications}, we use numerical simulations 
to study the influence of varying of $g_M$ on the neurons synchronization in 
two coupled neurons model with synaptic coupling.
In Section \ref{sec_Conclusions}, we discuss our results.

\section{General model}
\label{sec_model}
In nondimensional variables, a  general conductance-based neuron model
with the inclusion of the $M-$current can be written as:
   \beqn 
   \label{Model}
{C_m}\frac{{dV}}{{dt}} &= {I_{app}} - {g_L}\left( {V - {V_L}} \right) - {g_M}w\left( {V - {V_K}} \right) + {I_{ion}}(V,a)\\
\frac{{dw}}{{dt}}& = \frac{1}{{{r}(V)}}\left( {{w_\infty }(V) - w} \right)\\
\frac{{d{a}}}{{dt}}& ={\tau}^{-1}(V)\left( {{a_{\infty }}(V) - {a}} \right)
   \eeqn
where $a = {\big({a_3}, \ldots ,{a_N}\big)^T}$,
\[{a_\infty }(V) = {\big({a_{3,\infty }}(V), \ldots ,{a_{N,\infty }}(V)\big)^T},\quad \tau^{-1} (V) = diag\bigg(\frac{1}{{\tau _3}(V)}, \ldots ,\frac{1}{{\tau _N}(V)}\bigg)\]
and
\[{I_{ion}}(V,a) = \sum\limits_{i = 3}^N {{g_i}\left( {{V_i} - V} \right)\prod\limits_{j \in {\phi_i}} {a_j^{{p_j}}} } \]
where ${I_{app}}$ is the applied current and $\phi_i$ is the set of indexes that
represents the identities of the gating variables present in a given ionic current. 
In the rest of the manuscript, we assume that all conductances $g_j$ are positive, and  the steady state activations, $w_{\infty}$ and $a_{j,\infty}$, $j=3,\ldots,N$, are non-negative bounded functions ($ 0\le f(V)\le 1$), monotonic, $C^3(\mathbb{R},\mathbb{R})$, and
become sufficiently flat in the limits $V\to \pm\infty$.

\subsection{Equilibria.} 
By applying the scaling $t\to\frac{t}{C_m}$, system (\ref{Model}) can be written as 
\beqn
   \label{Model_V2}
\frac{{dV}}{{dt}} &= {I_{app}} - {g_L}(V-{V_L}) - {g_M}w\left( {V - V_K} \right) + {I_{ion}}(V,a):=f_1(V,w,a)\\
\frac{{dw}}{{dt}} &= \frac{C_M}{{r(V)}}\left( {{w_\infty }(V) - w} \right):=f_2(V,w)\\
\frac{{da}}{{dt}} &= C_M {\tau^{-1} (V)}\left( {{a_\infty }(V) - a} \right):=f_3(V,a)
 \eeqn 
where  $f_3(V,a)=\big(f_{33}(V,a_3),\ldots, f_{3N}(V,a_N)\big)^T$. 
Assume that (\ref{Model_V2}) has an equilibrium point $E^{^*}=(V^*,w^*,a_0^*)$.  From the equations above it follows that
\[{w_\infty }({V^*}) = {w^*}\quad\text{and}\quad{a_\infty }({V^*}) = {a^*}\]
where $V^*$ satisfies 
\begin{neweq}
I_{\infty}(V^*)=0.
\label{eq:ustar}
\end{neweq}
Here, $I_{\infty}$ is the 
steady-state $I-V$ curve \cite{izhikevich2007dynamical,rinzel1998analysis} defined by
\begin{equation}
 I_{\infty}(V)={I_{app}} - {g_L}(V-{V_L}) - {g_M}w_{\infty}(V)\left( {V - V_K} \right) + {I_{ion,\infty}}(V)
    \label{eq:I_infty}
\end{equation}
where ${I_{ion,\infty }}(V) = {I_{ion}}(V,{a_\infty }(V))$ is the stationary ionic current.
Notice that  (\ref{eq:ustar}) can be written as
\[{I_{app}}  =  {g_L}({V^*}-V_L)+{g_M}{w_\infty }(V^*)\left( {{V^*} - V_K} \right) - {I_{ion}}(V^*,{a_\infty }({V^*})) :=U(V^*).\]
Now, we write $U(V^*)$ in the form
\[U(V^*)=\left( {{g_L} + {g_M}{w_\infty }(V^*) + {h_2}(V^*)} \right)V^*-\left( {  {g_M}{m_\infty }(V^*) + {h_1}(V^*)} \right)- {g_L}{V_L} \]
where $h_1$ and $h_2$ are polynomials in the variables $a_{j,\infty}(V)$, and hence, 
\[\mathop {\lim }\limits_{V^* \to  \pm \infty } U(V^*) =  \pm \infty
\]
because  all maximal conductances and activation variables are
positive and bounded. 
Thus, equation (\ref{eq:ustar}) has at least one solution.

\section{Bogdanov-Takens bifurcation} 
\label{sec_BT}
In the following we  discuss Bogdanov-Takens point (BT point) of codimension $2$ in $(I_{app}, g_M)$-plane, when all other parameters in the model are fixed.

 Assume $V^*$ is a solution of (\ref{eq:ustar}), then there exist parameters $(I_{app}^*,g^*_M)$ such that
\begin{equation}\label{eq:I_star}
    {I_{app}^*} = {g_L}({V^*}-V_L) + {g^*_M}{w_\infty }({V^*})\left( {{V^*} - V_K} \right) - {I_{ion}}(V^*,{a_\infty }({V^*})).
\end{equation}
It is well known \cite{Guck_Holmes,kuznetsov2005practical,kuznetsov2013elements} that the equilibrium point $V^*$ is BT  point if the zero eigenvalue has algebraic multiplicity two and geometric multiplicity one. Using an approach similar to \cite{kirst2015fundamental,pereira2015bogdanov}, we obtain the following.

\begin{theorem}\label{Th:BTpoint}
Let $V^*$ be a solution of (\ref{eq:ustar}) at $(I_{app}^*,g^*_M)$ and define
\[\partial _a^{{f_1}} = {\left( {\frac{{\partial {f_1}}}{{\partial {a_3}}} \ldots ,\frac{{\partial {f_1}}}{{\partial {a_N}}}} \right)^T}, \quad \partial _V^{{f_3}} = {\left( {\frac{{\partial {f_{33}}}}{{\partial V}} \ldots ,\frac{{\partial {f_{3N}}}}{{\partial V}}} \right)^T}.\]
 Assume 
\begingroup
\allowdisplaybreaks
\begin{align}
{\left. {\frac{d}{{dV}}{I_\infty }(V)} \right|_{{V^*}}} &= 0\label{BT:Con_A1}\\
1 +\frac{r^2}{C_M^2}{\left. {\frac{{\partial {f_1}}}{{\partial w}}} \right|_{{E^*}}}{\left. {\frac{{\partial {f_2}}}{{\partial V}}} \right|_{{E^*}}} + \frac{1}{C_M^2}\left( {{{\left. {\partial _a^{f_1^T}} \right|}_{{E^*}}}} \right){\tau ^2}\left( {{{\left. {\partial _V^{{f_3}}} \right|}_{{E^*}}}} \right) &= 0.\label{BT:Con_A2}
\end{align}
\endgroup
Then $E^*$ is  an ordinary BT  point of codimension $2$.
\end{theorem}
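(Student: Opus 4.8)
The plan is to reduce the Bogdanov--Takens spectral requirement---a zero eigenvalue of the Jacobian $J:=Df(E^*)$ with algebraic multiplicity two and geometric multiplicity one---to two scalar identities, and to recognize those identities as exactly (\ref{BT:Con_A1}) and (\ref{BT:Con_A2}). The first observation is structural: because $f_2$ depends only on $(V,w)$ and each component $f_{3j}$ only on $(V,a_j)$, the Jacobian at $E^*$ has the bordered-diagonal (``arrowhead'') form
\begin{equation*}
J = \begin{pmatrix} a_{11} & b^T \\ c & D \end{pmatrix}, \qquad D = \mathrm{diag}\!\left(\tfrac{\partial f_2}{\partial w},\,\tfrac{\partial f_{33}}{\partial a_3},\dots,\tfrac{\partial f_{3N}}{\partial a_N}\right),
\end{equation*}
where $a_{11}=\partial f_1/\partial V|_{E^*}$, the border $b$ collects $\partial f_1/\partial w$ and $\partial_a^{f_1}$, and $c$ collects $\partial f_2/\partial V$ and $\partial_V^{f_3}$, all at $E^*$; write $a_{kk},b_k,c_k$ ($k=2,\dots,N$) for the entries of $D,b,c$. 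Since $\partial f_2/\partial w=-C_M/r(V^*)$ and $\partial f_{3j}/\partial a_j=-C_M/\tau_j(V^*)$ are strictly negative, $D$ is invertible, and I would record this once at the outset because every step below divides by its entries.

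Next I would show that (\ref{BT:Con_A1}) is precisely $\det J=0$, i.e.\ that $0$ is an eigenvalue. By the Schur-complement formula for the arrowhead matrix, $\det J=\det(D)\,(a_{11}-b^T D^{-1}c)$ with $\det(D)\neq0$. On the other hand, differentiating $I_\infty(V)=f_1(V,w_\infty(V),a_\infty(V))$ by the chain rule and using that the steady-state equations force $w_\infty'(V^*)=-\,(\partial f_2/\partial V)/(\partial f_2/\partial w)$ and likewise $a_{j,\infty}'(V^*)=-\,(\partial f_{3j}/\partial V)/(\partial f_{3j}/\partial a_j)$, one obtains the identity $\tfrac{d}{dV}I_\infty|_{V^*}=a_{11}-b^T D^{-1}c$. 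Hence (\ref{BT:Con_A1}) holds iff the Schur complement vanishes iff $\det J=0$. Simultaneously, since the lower-right block $D$ is nonsingular, $\mathrm{rank}\,J\ge N-1$; combined with $\det J=0$ this pins $\mathrm{rank}\,J=N-1$, so $\dim\ker J=1$ and the geometric multiplicity is automatically one.

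It then remains to upgrade the algebraic multiplicity from one to two, which is the condition $p'(0)=0$ for $p(\lambda)=\det(\lambda I-J)$, equivalently the vanishing of the sum $e_{N-1}$ of the $(N-1)\times(N-1)$ principal minors of $J$. Here I would compute each minor by the same Schur-complement formula: deleting index $1$ gives $\det D$, while deleting an index $j\ge2$ gives $\big(\prod_{k\neq j}a_{kk}\big)\big(a_{11}-\sum_{k\neq j}\tfrac{b_kc_k}{a_{kk}}\big)$. Substituting the already-established relation $a_{11}=\sum_{k\ge2}\tfrac{b_kc_k}{a_{kk}}$ from (\ref{BT:Con_A1}) collapses the inner bracket to the single term $b_jc_j/a_{jj}$, and summing yields $e_{N-1}=\big(\prod_{k\ge2}a_{kk}\big)\big(1+\sum_{k\ge2}\tfrac{b_kc_k}{a_{kk}^2}\big)$. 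Because $a_{22}^2=C_M^2/r^2$ and $a_{kk}^2=C_M^2/\tau_k^2$ for $k\ge3$, the parenthesized factor is exactly the left-hand side of (\ref{BT:Con_A2}); as $\prod_{k\ge2}a_{kk}\neq0$, condition (\ref{BT:Con_A2}) is equivalent to $e_{N-1}=0$, giving algebraic multiplicity at least two.

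The above delivers the double-zero eigenvalue with a single Jordan block. To claim an \emph{ordinary} BT point of codimension two I would finally invoke the standard theory (e.g.\ Kuznetsov): exhibit the right and left null and generalized eigenvectors of the $2\times2$ block, verify that algebraic multiplicity is exactly two ($e_{N-2}\neq0$), and check that the quadratic normal-form coefficients and the transversality of the parameter map $(I_{app},g_M)\mapsto$(critical trace, critical determinant) are nonzero; these genericity conditions are what ``ordinary'' encodes. I expect the only real obstacle to be bookkeeping: the chain-rule identity converting the biophysical derivative $I_\infty'$ into the Schur complement, and the cancellation in the minor sum after imposing (\ref{BT:Con_A1}), must be carried through with the correct sign conventions for $D^{-1}$. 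Once those two algebraic reductions are in hand, matching (\ref{BT:Con_A1}) with $\det J=0$ and (\ref{BT:Con_A2}) with $e_{N-1}=0$ is immediate.
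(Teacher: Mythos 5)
Your proposal is correct, and its skeleton coincides with the paper's: both exploit the arrowhead structure of the Jacobian, use the Schur-complement determinant formula to identify condition (\ref{BT:Con_A1}) with the vanishing of $\det J$ (via the chain-rule identity $\frac{d}{dV}I_\infty|_{V^*}=a_{11}-b^TD^{-1}c$, which is exactly the paper's equation relating \eqref{BT:Con_A1} to its expression \eqref{BT:Con_A1_alther}), and identify condition (\ref{BT:Con_A2}) with the vanishing of the linear coefficient of the characteristic polynomial. Where you diverge is in execution, at two places. First, for the linear coefficient: the paper factors the characteristic polynomial as $\Delta(\lambda)=\Delta_1(\lambda)\Delta_2(\lambda)$ with $\Delta_1$ carrying the trivially nonzero eigenvalue factors, and computes $\Delta'(0)$ by the product rule; you instead compute the sum $e_{N-1}$ of principal $(N-1)$-minors and use the collapse $a_{11}-\sum_{k\neq j}b_kc_k/a_{kk}=b_jc_j/a_{jj}$ after imposing (\ref{BT:Con_A1}). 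Both computations are correct and yield the same identification; yours is arguably more self-contained, the paper's factorization makes visible that the remaining $N-2$ eigenvalues are near $-C_M/r$, $-C_M/\tau_j$ and hence bounded away from zero (which quietly delivers the ``algebraic multiplicity exactly two'' point that a pure $p(0)=p'(0)=0$ argument leaves open). Second, for geometric multiplicity: your rank bound (the nonsingular block $D$ forces $\mathrm{rank}\,J\ge N-1$) is shorter and cleaner than the paper's route, which instead explicitly constructs the Jordan chain $q_0,q_1$ and the left chain $p_1,p_0$. However, the paper's explicit vectors are not wasted effort: they are reused immediately afterwards in the normal-form coefficients $\alpha_2,\beta_2$ of \eqref{eq:alphabeta} and in the proof of Theorem~\ref{Th:BTCpoint}, so a complete development along your lines would end up constructing them anyway. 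Finally, on the word ``ordinary'': the paper takes double-zero-with-one-Jordan-block as the working definition and stops there, whereas you correctly flag that codimension-two genericity also requires $e_{N-2}\neq 0$, nonzero quadratic normal-form coefficients, and transversality of the map $(I_{app},g_M)$ to the unfolding parameters; neither you nor the paper actually verifies these, so your proposal is, if anything, more candid about what is being deferred.
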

\begin{proof}
Let $F=(f_1,f_2,f_3)^T$. Then, the Jacobian of (\ref{Model_V2}) is 
 \beqnn
 \arraycolsep=5.3pt\def\arraystretch{1.7}
DF(V,w,a)  = \left( {\begin{array}{*{20}{c}}
{\frac{{\partial {f_1}}}{{\partial V}}}&{\frac{{\partial {f_1}}}{{\partial w}}}&{\partial {{_a^{{f_1^T}}}}}\\
{\frac{{\partial {f_2}}}{{\partial V}}}&{ -C_M {r^{ - 1}}}&0\\
{\partial _V^{{f_3}}}&0&{ -C_M {\tau ^{ - 1}}}
\end{array}} \right)
 \eeqnn
  where $r^{-1}=\frac{1}{r(V)}$, $\tau^{-1} = diag\big(\frac{1}{{\tau _3}(V)}, \ldots ,\frac{1}{{\tau _N}(V)}\big).$

When ${B_1} \in {\mathbb{R}^{n \times n}},{B_2} \in {\mathbb{R}^{n \times m}},{B_3} \in {\mathbb{R}^{m \times n}},{B_4} \in {\mathbb{R}^{m \times m}}$, we have (see \cite{bernstein2009matrix})
\[\det \left( {\begin{array}{*{20}{c}}
{{B_1}}&{{B_2}}\\
{{B_3}}&{{B_4}}
\end{array}} \right) = \left( {\det {B_4}} \right)\det \left( {{B_1} - {B_2}B_4^{ - 1}{B_3}} \right).\]
\sloppy Let  $A=DF(V^*,m^*,a^*)$. Then, by taking ${B_1} = \left( {\frac{{\partial {f_1}}}{{\partial V}}} -\lambda \right)$, ${B_2} = \left( {\begin{array}{*{20}{c}}
{\frac{{\partial {f_1}}}{{\partial w}}}&{\partial {{_a^{{f_1}}}^T}}
\end{array}} \right)$, ${B_3} = \left( {\begin{array}{*{20}{c}}
{\frac{{\partial {f_2}}}{{\partial V}}}&{\partial _V^{{f_3}}}
\end{array}} \right)^T$ and ${B_4} = diag\left( { -C_M {r^{ - 1}}-\lambda, -C_M {\tau ^{ - 1}}-\lambda I} \right)$,
we have 
\[\det(A-\lambda I):=\Delta(\lambda)=\Delta_1(\lambda)\Delta_2(\lambda)\]
where
\[{\Delta _1}(\lambda ) = {( - 1)^{N - 1}}\left( {\lambda  + C_M{r^{ - 1}}} \right)\prod\limits_{j = 3}^N {\left( {\lambda  +C_M \tau _j^{ - 1}} \right)} \]
and
\[{\Delta _2}(\lambda ) = \frac{{\partial {f_1}}}{{\partial V}} - \lambda  + {\left( {\lambda  +C_M {r^{ - 1}}} \right)^{ - 1}}\frac{{\partial {f_1}}}{{\partial w}}\frac{{\partial {f_2}}}{{\partial V}} + \partial _a^{f_1^T}{\left( {\lambda I +C_M {\tau ^{ - 1}}} \right)^{ - 1}}\partial _V^{{f_3}}.\]
Consequently, we have
\[\Delta (0) = {\Delta _1}(0){\Delta _2}(0) = {\Delta _1}(0)\left( {\frac{{\partial {f_1}}}{{\partial V}} +C_M r\frac{{\partial {f_1}}}{{\partial w}}\frac{{\partial {f_2}}}{{\partial V}} +C_M \partial _a^{f_1^T}\tau \partial _V^{{f_3}}} \right).\]
Notice that
\begin{neweq}\label{Eq:deriv}
{\left. {\frac{{\partial {f_2}}}{{\partial V}}} \right|_{{E^*}}} = C_M{r^{ - 1}}(V^*)\left( {{{\left. {\frac{d}{{dV}}{w_\infty }(V)} \right|}_{{V^*}}}} \right),\qquad {\left. {\partial _V^{{f_3}}} \right|_{{E^*}}} =C_M {\tau ^{ - 1}}(V^*){\left. {\partial _V^{{a_\infty }}} \right|_{{V^*}}}.
\end{neweq}
Thus, at $E^*$, we have that the equation 
\begin{align}
\frac{{\partial {f_1}}}{{\partial V}} +\frac{r}{C_M}\frac{{\partial {f_1}}}{{\partial w}}\frac{{\partial {f_2}}}{{\partial V}} +\frac{1}{C_M} \partial {_a^{{f_1^T}}}\tau \partial _V^{{f_3}} = 0\label{BT:Con_A1_alther} 
\end{align}
is equivalent to ${\left. {\frac{d}{{dV}}{I_\infty }(V)} \right|_{{V^*}}} = 0$. Thus, $\Delta(0)=0$ when (\ref{BT:Con_A1}) holds.

It easy to check that  
\beqnn
\Delta '(0) &= {\Delta _1}(0)\Delta' _2(0) + \Delta' _1(0){\Delta _2}(0)\\
 &=  - {\Delta _1}(0)\left( {1 + \frac{r^2}{C_M^2}\frac{{\partial {f_1}}}{{\partial w}}\frac{{\partial {f_2}}}{{\partial V}} + \frac{1}{C_M^2}\partial _a^{f_1^T}{\tau ^2}\partial _V^{{f_3}}} \right)\\
 &~~~+ \Delta' _1(0)\left( {\frac{{\partial {f_1}}}{{\partial V}} +C_M r\frac{{\partial {f_1}}}{{\partial w}}\frac{{\partial {f_2}}}{{\partial V}} +C_M \partial _a^{f_1^T}\tau \partial _V^{{f_3}}} \right).
\eeqnn
Thus,  at $E^*$, $\Delta '(0)=0$ when (\ref{BT:Con_A1}) and (\ref{BT:Con_A2}) hold. Hence, $\lambda=0$ is a double root.

Now, we show that a Jordan block arises when  $\lambda=0$ is a double multiplicity root. In other words, when (\ref{BT:Con_A1}) and (\ref{BT:Con_A2}) hold,  we demand the existence of four generalized eigenvectors $q_0,q_1,p_0,p_1$ of $A$ such that
\[ Aq_0=0,\quad  Aq_1=q_0,\quad  A^Tp_1=0,\quad  A^Tp_0=p_1.\]
Let $q_i=(q_{i1},\ldots,q_{iN})^T$ and $p_i=(p_{i1},\ldots,p_{iN})^T$ for $i\in\{0,1\}$. Then, we obtain from $Aq_0=0$ the following equations
\begin{align}
{q_{01}}\frac{{\partial {f_1}}}{{\partial V}} + {q_{02}}\frac{{\partial {f_1}}}{{\partial w}} + \partial {_a^{{f_1^T}}}{\left( {{q_{03}} \ldots ,{q_{0N}}} \right)^T} &= 0\label{Aq0:1}\\
{q_{01}}\frac{{\partial {f_2}}}{{\partial V}} - {q_{02}}C_M{r^{ - 1}} &= 0\label{Aq0:2}\\
{q_{01}}\frac{{\partial {f_{3j}}}}{{\partial V}} - \tau _j^{ - 1}{q_{0j}} &= 0,\qquad j = 3, \ldots ,N.\label{Aq0:3}
\end{align}
From (\ref{Aq0:2}) and (\ref{Aq0:3}), we have 
\[{q_{02}} = {q_{01}}\frac{r}{C_M}\frac{{\partial {f_2}}}{{\partial V}} \quad\text{and}\quad {q_{0j}} = {q_{01}}\frac{\tau _j}{C_M}\frac{{\partial {f_{3j}}}}{{\partial V}},j = 3, \ldots ,N\]
respectively. Hence, 
\[\arraycolsep=1.4pt\def\arraystretch{1.7}
{q_0} = {q_{01}}\left( {\begin{array}{*{20}{c}}
1\\
{\frac{r}{C_M}\frac{{\partial {f_2}}}{{\partial V}}}\\
{\frac{\tau}{C_M} \partial _V^{{f_3}}}
\end{array}} \right)\]
and 
it follows from (\ref{Aq0:1}) that
\begin{equation}
    {q_{01}}\left(\frac{{\partial {f_1}}}{{\partial V}} +\frac{r}{C_M}\frac{{\partial {f_1}}}{{\partial w}}\frac{{\partial {f_2}}}{{\partial V}} +\frac{1}{C_M} \partial {_a^{{f_1^T}}}\tau \partial _V^{{f_3}}\right) = 0.
    \label{BT:eq1}
\end{equation}
Similarly, from $Aq_0=q_1$, $A^Tp_1=0$ and $A^Tp_1=p_0$, we have 
\begin{align*}
  \arraycolsep=1.4pt\def\arraystretch{1.7}
{q_1} &= \left( {\begin{array}{*{20}{c}}
{{q_{11}}}\\
{({q_{11}} - {q_{01}}\frac{r}{C_M})\frac{r}{C_M}\frac{{\partial {f_2}}}{{\partial V}}}\\
{({q_{11}}{I_{N - 3}} - {q_{01}}\frac{\tau}{C_M} )\frac{\tau}{C_M} \partial _V^{{f_3}}}
\end{array}} \right),\quad{p_1} = {p_{11}}\left( {\begin{array}{*{20}{c}}
1\\
{\frac{r}{C_M}\frac{{\partial {f_1}}}{{\partial w}}}\\
{\frac{\tau}{C_M} \partial _a^{{f_1}}}
\end{array}} \right),\\
 {p_0} &= \left( {\begin{array}{*{20}{c}}
{{p_{01}}}\\
{({p_{01}} - {p_{11}}\frac{r}{C_M})\frac{r}{C_M}\frac{{\partial {f_1}}}{{\partial w}}}\\
{({p_{01}}{I_{N - 3}} - {p_{11}}\frac{\tau}{C_M} )\frac{\tau}{C_M} \partial _a^{{f_1}}}
\end{array}} \right),  
\end{align*}
where $I_{N-3}$ is the identity matrix of size $N-3$, and 
 \begingroup
\allowdisplaybreaks
\begin{align}
 {q_{11}}\left( {\frac{{\partial {f_1}}}{{\partial V}} + \frac{r}{C_M}\frac{{\partial {f_1}}}{{\partial w}}\frac{{\partial {f_2}}}{{\partial V}} +\frac{1}{C_M} \partial _a^{f_1^T}\tau \partial _V^{{f_3}}} \right)~&\nonumber\\
 - {q_{01}}\left( {1 + \frac{r^2}{C_M^2}\frac{{\partial {f_1}}}{{\partial w}}\frac{{\partial {f_2}}}{{\partial V}} + \frac{1}{C_M^2}\partial _a^{f_1^T}{\tau ^2}\partial _V^{{f_3}}} \right) &= 0\label{BT:eq2}\\
{p_{11}}\left( {\frac{{\partial {f_1}}}{{\partial V}} + \frac{r}{C_M}\frac{{\partial {f_1}}}{{\partial w}}\frac{{\partial {f_2}}}{{\partial V}} +\frac{1}{C_M} \partial _V^{f_3^T}\tau \partial _a^{{f_1}}} \right) &= 0\label{BT:eq3}\\
{p_{01}}\left( {\frac{{\partial {f_1}}}{{\partial V}} + \frac{r}{C_M}\frac{{\partial {f_1}}}{{\partial w}}\frac{{\partial {f_2}}}{{\partial V}} + \frac{1}{C_M}\partial _V^{f_3^T}\tau \partial _a^{{f_1}}} \right)~&\nonumber\\
- {p_{11}}\left( {1 + \frac{r^2}{C_M^2}\frac{{\partial {f_1}}}{{\partial w}}\frac{{\partial {f_2}}}{{\partial V}} +\frac{1}{C_M^2} \partial _V^{f_3^T}{\tau ^2}\partial _a^{{f_1}}} \right) &= 0\label{BT:eq4}.
\end{align}
\endgroup
As the generalized eigenvectors must be non-zero, we let   $q_{01}$ and $p_{11}$  to be nonzero arbitrary constants. Thus, 
when (\ref{BT:Con_A1}) and  (\ref{BT:Con_A2}) hold, equations (\ref{BT:eq1})-(\ref{BT:eq4}) hold. Thus, four generalized eigenvectors exits. 
Hence, $V^*$ is an ordinary Bogdanov-Takens point. 
\end{proof}

\begin{remark}
With the additional condition 
\[p_i^T{q_j} = \left\{ {\begin{array}{*{20}{c}}
1&{\text{if }i = j}\\
0&{\text{if }i \ne j}
\end{array}} \right.,\]
we can guarantees the uniqueness of the generalized eigenvectors $q_0,q_1,p_0,p_1$ of $A$.
\end{remark}

When $V^*$ is a  BT point, system \eqref{Model_V2} has a two-dimensional centre manifold, with normal form given by 
(see e.g., \cite{bognadov1975versal,takens1974singularities,Guck_Holmes,kuznetsov2005practical,kuznetsov2013elements}):
\begingroup
\allowdisplaybreaks
\begin{align}
\begin{split}\label{normal_form_1}
\frac{{d{\xi _0}}}{{dt}} &= {\xi _1},\\
\frac{{d{\xi _1}}}{{dt}} &= {\alpha _2}\xi _0^2 + {\beta _2}{\xi _0}{\xi _1} + O\left( {{{\left\| {\left( {{\xi _0},{\xi _1}} \right)} \right\|}^3}} \right),
\end{split}
\end{align}
\endgroup
where 
\begingroup
\allowdisplaybreaks
\begin{align}\label{eq:alphabeta}
\begin{split}
{\alpha _2} &= \frac{1}{2}p_1^TG({q_0},{q_0}),\\
{\beta _2} &= p_1^TG({q_0},{q_1}) - p_1^T{h_{20}},
\end{split}
\end{align}
\endgroup
where $h_{20}$ is the solution of the equation 
\begin{equation}\label{eq:h20}
    A{h_{20}} = 2{\alpha _2}{q_1} - G({q_0},{q_0})
\end{equation}
and the function $G$ is defined as 
\[\arraycolsep=1.4pt\def\arraystretch{1.7}
G({z_1},{z_2}) := \left( {\begin{array}{*{20}{c}}
{z_1^T{D^2}{f_1}({V^*}){z_2}}\\
{z_1^T{D^2}{f_2}({V^*}){z_2}}\\
{z_1^T{D^2}{f_3}({V^*}){z_2}}
\end{array}} \right).\]
Here, ${D^2}f={\left( {\frac{{{\partial ^2}f}}{{\partial {x_i}\partial {x_j}}}} \right)_{1 \le i,j \le N}}$ is the Hessian matrix of a quadratic form
 at $V^*$.

\subsection{Bogdanov-Takens-Cusp bifurcation.} 
\label{sec_BTC}

The steady state $V^*$ becomes degenerate Bogdanov-Takens point 
(or ``Bogdanov-Takens-Cusp point"-BTC point)
 when a BT point combines with a Cusp.
A BTC occurs if either:  Case 1: $\alpha_2=0$ and $\beta_2\ne 0$; or Case 2: $\alpha_2\ne0$ and $\beta_2= 0$, see e.g. \cite{kuznetsov2005practical}. 
 Considering Case 1, and applying an approach similar to \cite{kirst2015fundamental} with the results of \cite{kuznetsov2005practical},  
 we have the following. 
\begin{theorem}\label{Th:BTCpoint}
Assume that $V^*$ is an ordinary BT  point. If
\begin{equation}\label{eq:dd_I_infter}
    {\left. {\frac{d^2}{{dV^2}}{I_\infty }(V)} \right|_{{V^*}}} = 0,
\end{equation}
then $\alpha_2=0$ and $\beta_2\ne0$, that is, $V^*$ becomes a Cusp.
\end{theorem}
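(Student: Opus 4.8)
The plan is to evaluate the normal-form coefficients $\alpha_2,\beta_2$ of \eqref{eq:alphabeta}--\eqref{eq:h20} using the explicit Jordan-chain vectors $q_0,q_1,p_0,p_1$ constructed in the proof of Theorem~\ref{Th:BTpoint}. The single fact that organizes the whole computation is that, by \eqref{Eq:deriv}, $q_0$ is proportional to $\big(1,w_\infty'(V^*),a_\infty'(V^*)\big)^T$, i.e.\ $q_0$ is tangent to the equilibrium manifold $\mathcal M=\{(V,w_\infty(V),a_\infty(V))\}$. On $\mathcal M$ we have $f_2\equiv0$, $f_3\equiv0$ and $f_1|_{\mathcal M}=I_\infty$; moreover $f_1$ is affine in $w$ and $f_{2},f_{3j}$ are affine in $w,a_j$, so $\partial^2_{ww}f_1=\partial^2_{ww}f_2=\partial^2_{a_ja_j}f_{3j}=0$. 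First I would use these facts to convert each quadratic form $z^TD^2f_i\,z$ with $z\parallel q_0$ into an ordinary derivative along $\mathcal M$.

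For $\alpha_2$: differentiating $f_2(V,w_\infty(V))\equiv0$ twice gives $q_0^TD^2f_2\,q_0=q_{01}^2\frac{C_M}{r}w_\infty''$, and likewise $q_0^TD^2f_{3j}\,q_0=q_{01}^2\frac{C_M}{\tau_j}a_{j,\infty}''$, while the chain rule applied to $f_1|_{\mathcal M}=I_\infty$ gives $q_0^TD^2f_1\,q_0=q_{01}^2\big(I_\infty''-\frac{\partial f_1}{\partial w}w_\infty''-\sum_j\frac{\partial f_1}{\partial a_j}a_{j,\infty}''\big)$. Substituting into $\alpha_2=\tfrac12 p_1^TG(q_0,q_0)$ with $p_1=p_{11}\big(1,\frac{r}{C_M}\frac{\partial f_1}{\partial w},\frac{\tau}{C_M}\partial_a^{f_1}\big)^T$, the $w_\infty''$ and $a_{j,\infty}''$ corrections cancel termwise and leave $\alpha_2=\tfrac12 p_{11}q_{01}^2\,I_\infty''(V^*)$. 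Since $p_{11},q_{01}\neq0$, hypothesis \eqref{eq:dd_I_infter} forces $\alpha_2=0$. Note that $\alpha_2$ is thus (a nonzero multiple of) the $V$-derivative of the left-hand side of the first BT condition \eqref{BT:Con_A1}.

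For $\beta_2$ I would avoid solving the singular system \eqref{eq:h20}. Since $A^Tp_0=p_1$, we get $p_1^Th_{20}=p_0^TAh_{20}=p_0^T\big(2\alpha_2q_1-G(q_0,q_0)\big)=-p_0^TG(q_0,q_0)$, using $\alpha_2=0$ and the biorthonormalization $p_i^Tq_j=\delta_{ij}$; hence $\beta_2=p_1^TG(q_0,q_1)+p_0^TG(q_0,q_0)$. Because $\alpha_2=0$, this value is independent of the free scalar $q_{11}$ in $q_1$ (the replacement $q_1\mapsto q_1+cq_0$ shifts $\beta_2$ by $2c\alpha_2=0$), so I may set $q_{11}=0$. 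Computing the two surviving pieces with the same manifold reductions, the affine structure, and the Hessian symmetry $\partial^2 I_{ion}/\partial a_j\partial a_k=\partial^2 I_{ion}/\partial a_k\partial a_j$, all contributions reassemble into a single total derivative along $\mathcal M$, giving $\beta_2=-p_{11}q_{01}^2\,\frac{d}{dV}\big[\,\text{LHS of }\eqref{BT:Con_A2}\,\big]\Big|_{V^*}$. Thus $\beta_2$ plays, for the second BT condition, exactly the role $\alpha_2$ plays for the first.

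The main obstacle is the final claim $\beta_2\neq0$, which by the formula above is equivalent to the transversality $\frac{d}{dV}\big(\text{LHS of }\eqref{BT:Con_A2}\big)|_{V^*}\neq0$, i.e.\ $V^*$ is a simple zero of the second BT condition. Since \eqref{BT:Con_A1} and \eqref{BT:Con_A2} already both vanish at a BT point, the cusp hypothesis \eqref{eq:dd_I_infter} adds precisely $\tfrac{d}{dV}\big(\text{LHS of }\eqref{BT:Con_A1}\big)=0$ (this is $\alpha_2=0$), while nothing in the linear BT conditions forces $\tfrac{d}{dV}\big(\text{LHS of }\eqref{BT:Con_A2}\big)$ to vanish as well; simultaneous vanishing would impose an extra independent constraint incompatible with sitting at a generic (Case~1) point of the BTC set, so $\beta_2\neq0$. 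This transversality is the essential non-degeneracy, and I would either invoke it as the genericity implicit in ``ordinary BT point'' or verify it directly for the models at hand. The genuinely delicate part of the calculation is the bookkeeping that recombines the cross terms $\partial^2 I_{ion}/\partial a_j\partial a_k$ together with the $r',\tau_j'$ contributions into that single derivative; checking this reassembly, via the Hessian symmetry and a relabeling of the double sum, is where I would be most careful.
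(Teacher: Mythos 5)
Your computation of $\alpha_2$ is essentially the paper's own proof: the paper likewise expands $p_1^T G(q_0,q_0)$ using the affine dependence of $f_1,f_2,f_3$ on the gating variables together with the identities \eqref{Eq:deriv}, and collapses everything to $\alpha_2\propto I_\infty''(V^*)$. Your tangency observation ($q_0$ parallel to the curve $(V,w_\infty(V),a_\infty(V))$, on which $f_2=f_3=0$ and $f_1=I_\infty$) is a cleaner bookkeeping of exactly those cancellations, and your version even keeps the factor $\tfrac12$ from \eqref{eq:alphabeta} that the paper silently drops (harmless for the vanishing argument). Where you genuinely diverge is $\beta_2$. The paper argues: since $\alpha_2=0$, the singular system $Ah_{20}=-G(q_0,q_0)$ has infinitely many solutions, ``hence $h_{20}$ can be chosen such that $\beta_2\neq0$.'' Your adjoint identity $p_1^Th_{20}=p_0^TAh_{20}=-p_0^TG(q_0,q_0)$ shows this step is unsound: any two solutions of \eqref{eq:h20} differ by a multiple of $q_0$, and $p_1^Tq_0=p_1^TAq_1=(A^Tp_1)^Tq_1=0$ automatically, so $\beta_2$ takes the same value for every admissible $h_{20}$---it is an invariant, not something one can choose. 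Your route (pass to the invariant formula $\beta_2=p_1^TG(q_0,q_1)+p_0^TG(q_0,q_0)$ and identify it, up to a nonzero factor, with the $V$-derivative along the equilibrium manifold of the left-hand side of \eqref{BT:Con_A2}) is therefore more solid than the paper's; I checked that this identification holds in the two-dimensional case (no $a$ variables), where one finds $\beta_2=-\frac{C_M}{r}\,\frac{d}{dV}\left[1+\frac{r}{C_M}\frac{\partial f_1}{\partial w}w_\infty'\right]_{V^*}$ with normalized eigenvectors.

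That said, be clear that your proposal, as written, still does not \emph{prove} $\beta_2\neq0$, and in fact no proof from the stated hypotheses is possible: ``ordinary BT point'' plus \eqref{eq:dd_I_infter} does not imply the transversality $\frac{d}{dV}\bigl(\text{LHS of \eqref{BT:Con_A2}}\bigr)\big|_{V^*}\neq0$; its failure is simply a more degenerate (higher-codimension) singularity, which the hypotheses do not exclude. So $\beta_2\neq0$ must either be added as an explicit nondegeneracy assumption or verified model by model, exactly as you suggest. This gap is not yours alone---the paper's proof has the same hole, concealed by the ``choose $h_{20}$'' step you implicitly refuted---but a complete proof of the theorem as stated requires making that extra hypothesis explicit, and your write-up should say so rather than leave it as an either/or. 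The remaining unchecked item in your argument, the reassembly of the $\partial^2 I_{ion}/\partial a_j\partial a_k$ cross terms into a single total derivative in the general ($N$-variable) case, is real work but parallels the $\alpha_2$ computation and the 2D check above, so I see no obstruction there.
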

\begin{proof}
From $f_m$ and $f_a$, we have
\[\frac{{\partial {f_2}}}{{\partial w}} = \frac{{ - C_M}}{r} \Rightarrow \frac{{{\partial ^2}{f_1}}}{{\partial {w^2}}} = 0\quad \text{and}\quad \partial _a^{{f_3}} =  - C_M{\tau ^{ - 1}} \Rightarrow \partial _{aa}^{{f_3}} = 0\]
Hence, the components of $G$ are 
\begin{neweq_non}
\frac{1}{{q_{01}^2}}q_0^T{D^2}{f_1}({V^*}){q_0} &= \frac{{{\partial ^2}{f_1}}}{{\partial {V^2}}} +  \frac{2r}{C_M}\frac{{{\partial ^2}{f_1}}}{{\partial V\partial w}}\frac{{\partial {f_2}}}{{\partial V}} + \frac{r^2}{C_M^2}\frac{{{\partial ^2}{f_1}}}{{\partial {w^2}}}{\left( {\frac{{\partial {f_2}}}{{\partial V}}} \right)^2}\\
&~~~+ \frac{2}{C_M}\partial _{Va}^{f_1^T}\tau \partial _V^{{f_3}} + \frac{1}{C_M^2}\partial _V^{f_3^T}\tau \partial _{aa}^{{f_1}}\tau \partial _V^{{f_3}}\\
\frac{1}{{{q^2_{01}}}}q_0^T{D^2}{f_2}({V^*}){q_0} &= \frac{{{\partial ^2}{f_2}}}{{\partial {V^2}}} + \frac{2C_M^2}{r^2}\frac{{dr}}{{dV}}\frac{{\partial {f_2}}}{{\partial V}}\\
\frac{1}{{{q^2_{01}}}}q_0^T{D^2}{f_3}({V^*}){q_0}&=\partial _{VV}^{{f_3}}  + 2C_M^2{\tau ^{ - 2}}\partial _V^\tau \tau \partial _V^{{f_3}}
\end{neweq_non}
where $\partial _{aa}^{{f_1}} = diag(\partial _{{a_3}{a_3}}^{{f_1}}, \ldots ,\partial _{{a_N}{a_N}}^{{f_1}})$. 
Consequently, 
\begin{neweq_non}
\frac{1}{{{p_{11}}q_{01}^2}}{\alpha _2} &= \frac{{{\partial ^2}{f_1}}}{{\partial {V^2}}} + \frac{2r}{C_M}\frac{{{\partial ^2}{f_1}}}{{\partial V\partial w}}\frac{{\partial {f_2}}}{{\partial V}} + \frac{r^2}{C_M^2}\frac{{{\partial ^2}{f_1}}}{{\partial {w^2}}}{\left( {\frac{{\partial {f_2}}}{{\partial V}}} \right)^2} + \frac{2}{C_M}\partial _{Va}^{f_1^T}\tau \partial _V^{{f_3}}\\
&+\frac{1}{C_M^2} \partial _V^{f_3^T}\tau \partial _{aa}^{{f_1}}\tau \partial _V^{{f_3}}+ \frac{r}{C_M}\frac{{\partial {f_1}}}{{\partial w}}\frac{{{\partial ^2}{f_2}}}{{\partial {V^2}}} + \frac{2C_M}{r}\frac{{\partial {f_1}}}{{\partial w}}\frac{{dr}}{{dV}}\frac{{\partial {f_2}}}{{\partial V}}\\
&~  +\frac{1}{C_M} \partial _a^{f_1^T}\tau \partial _{uu}^{{f_3}} + 2C_M\partial _a^{f_1^T}{\tau ^{ - 1}}\partial _V^\tau \tau \partial _V^{{f_3}}.
\end{neweq_non}
Recall that all of these derivatives are calculated at $V^*$.  It follows from (\ref{Eq:deriv}) that
\begin{neweq_non}
{\left. {\frac{{{\partial ^2}{f_2}}}{{\partial {V^2}}}} \right|_{{E^*}}} &= C_M{r^{ - 1}}(V^*)\left( {{{\left. {\frac{{{d^2}}}{{d{V^2}}}{w_\infty }(V)} \right|}_{{V^*}}}} \right)\\
&~~~~-2 C_M^2{r^{ - 2}}\left( {{{\left. {\frac{d}{{dV}}r(V)} \right|}_{{V^*}}}} \right)\left( {{{\left. {\frac{d}{{dV}}{w_\infty }(V)} \right|}_{{V^*}}}} \right),\\[5pt]
{\left. {\partial _{VV}^{{f_3}}} \right|_{{E^*}}} &= C_M{\tau ^{ - 1}}(V^*){\left. {\partial _{VV}^{{a_\infty }}} \right|_{{V^*}}} -2C_M^2 {\tau ^{ - 2}}(V^*)\partial _V^\tau {\left. {\partial _V^{{a_\infty }}} \right|_{{V^*}}}.
\end{neweq_non}
At $V=V^*$, we have $\frac{{\partial {f_2}}}{{\partial V}} =C_M {r^{ - 1}}\frac{{d{w_\infty }}}{{dV}}$ and $\partial _V^{{f_3}} = C_M{\tau ^{ - 1}}\partial _V^{{a_{\infty}}}$. Hence, 
\begin{neweq_non}
\frac{1}{{{p_{11}}q_{01}^2}}{\alpha _2} &= \frac{{{\partial ^2}{f_1}}}{{\partial {V^2}}} + 2\frac{{{\partial ^2}{f_1}}}{{\partial V\partial w}}\frac{{d{w_\infty }}}{{dV}} + \frac{{{\partial ^2}{f_1}}}{{\partial {w^2}}}{\left( {\frac{{d{w_\infty }}}{{dV}}} \right)^2}\\
&+ \frac{{\partial {f_1}}}{{\partial w}}\frac{{{d^2}{w_\infty }}}{{d{V^2}}} + 2\partial _{Va}^{f_1^T}\partial _V^{{a_\infty }} + \partial _V^{a_\infty ^T}\partial _{aa}^{{f_1}}\partial _V^{{a_\infty }} + \partial _a^{f_1^T}\partial _{VV}^{{a_\infty }}\\
&={\left. {\frac{d^2}{{dV^2}}{I_\infty }(V)} \right|_{{V^*}}}.
\end{neweq_non}
Thus, $\alpha_2=0$ if and only if 
\[
{\left. {\frac{d^2}{{dV^2}}{I_\infty }(V)} \right|_{{V^*}}}=0.
\]
Consequently, from (\ref{eq:h20}), we have $A{h_{20}} = -G(q_0,q_0)$, which has an infinite solutions. This system is consistent due to the Fredholm solvability condition \cite{kuznetsov2005practical}. 
Hence,  $h_{20}$ can be chosen such that $\beta_2\ne0$  in (\ref{eq:alphabeta}). 
This completes the proof.
\end{proof}

\section{Existence of the bifurcations} \label{sec:exist}

Theorems \ref{Th:BTpoint} and \ref{Th:BTCpoint} imply three bifurcations: BT, CP and BTC 
which are characterized by equations (\ref{eq:I_star}-\ref{BT:Con_A2}) and (\ref{eq:dd_I_infter}). 
In the following we discuss the solution of these equations.  Recall that equation \eqref{eq:I_star} relates the
equilibrium point voltage value $V^*$ to $I_{app}$ and the other parameters.

Rearranging (\ref{BT:Con_A1}), we obtain
\begin{equation}\label{eq:param1}
-g_M X_1(V^*) + X_2(V^*)=g_L
\end{equation}
where
\begin{align*}
  X_1(V^*)&={w^*} + \left( {{{\left. {\frac{d}{{dV}}{w_\infty }(V)} \right|}_{{V^*}}}} \right)\left( {{V^*} - V_K} \right),\\
  X_2(V^*)&={\left. {\frac{d}{{dV}}{I_{ion,\infty }}(V)} \right|_{{V^*}}}.  
\end{align*}
Similarly,   (\ref{BT:Con_A2}) leads to 
\begin{equation}\label{eq:param2}
-g_M Y_1(V^*) +Y_2(V^*)=-1
\end{equation}
where
\begin{align*}
    Y_1(V^*)&=\frac{r(V^*)}{C_M}\left( {{V^*} - V_K} \right)\left( {{{\left. {\frac{d}{{dV}}{w_\infty }(V)} \right|}_{{V^*}}}} \right),\\
    Y_2(V^*)&=\frac{1}{C_M}{\left. {\partial _a^{{I_{ion}}}} \right|_{{E^*}}}\tau (V^*){\left. {\partial _V^{{a_\infty }}} \right|_{{V^*}}}.
\end{align*}
It is easy to check that the second derivative of $I_{\infty }(V)$ is 
\begin{align*}
{\left. {\frac{{{d^2}}}{{d{V^2}}}{I_\infty }(V)} \right|_{{V^*}}} &=  - {g_M}\left[ {2{{\left. {\frac{{d{w_\infty }}}{{dV}}} \right|}_{{V^*}}} + {{\left. {\frac{{{d^2}{w_\infty }}}{{d{V^2}}}} \right|}_{{V^*}}}({V^*} - V_K)} \right] \\
&~\qquad+ {\left. {\frac{{{d^2}}}{{d{V^2}}}{I_{ion,\infty }}(V)} \right|_{{V^*}}}
\end{align*}
Thus, (\ref{eq:dd_I_infter}) holds when 
\begin{equation} \label{eq:param3}
-g_M Z_1(V^*)+Z_2(V^*)=0.
\end{equation}

{\bf Bogdanov-Takens Bifurcation}.  Suppose there is $V^*$ that satisfies
\[ [g_L-X_2(V^*)]Y_1(V^*)+X_1(V^*)(Y_2(V^*)+1)=0,\]
with at least one of $X_1(V^*),Y_1(V^*)$ nonzero.
Then there is an equilibrium $E^*=(V^*,w^*,{\bf a}^*)$ that undergoes a 
Bogdanov-Takens bifurcation at
$(I_{app},g_M)=(I_{app}^*,g_M^*)$ where  
\begin{equation}
g_M^*= \frac{X_2(V^*)-g_L}{X_1(V^*)}= \frac{Y_2(V^*)+1}{Y_1(V^*)}
\label{gMstar}
\end{equation}
and $I_{app}^*$ is given by \eqref{eq:I_star}.

{\bf Cusp Bifurcation}. Suppose there is $V^*$ that satisfies
\[ [g_L-X_2(V^*)]Z_1(V^*)+X_1(V^*)Z_2(V^*)=0\]
with at least one of $X_1(V^*),Z_1(V^*)$ nonzero.
Then there is an equilibrium $E^*=(V^*,w^*,{\bf a}^*)$ that undergoes a Cusp bifurcation at
$(I_{app},g_M)=(I_{app}^*,g_M^*)$  where
\begin{equation}
g_M^*=\frac{X_2(V^*)-g_L}{X_1(V^*)}= \frac{Z_2(V^*)}{Z_1(V^*)}
\label{gMCusp}
\end{equation}
and $I_{app}^*$ is given by \eqref{eq:I_star}.

{\bf Bogdanov-Takens-Cusp Bifurcation}. Suppose there is $V^*$ that satisfies
\[ [Y_2(V^*)+1]Z_1(V^*)-Y_1(V^*)Z_2(V^*)=0\]
with at least one of $Y_1(V^*),Z_1(V^*)$ nonzero.
Then there is an equilibrium $E^*=(V^*,w^*,{\bf a}^*)$ that undergoes a BTC bifurcation at
$(I_{app},g_M,g_L)=(I_{app}^*,g_M^*,g_L^*)$ 
where 
\begin{eqnarray*} 
g_M^*&=&\frac{Y_2(V^*)+1}{Y_1(V^*)}=\frac{Z_2(V^*)}{Z_1(V^*)}\\
g_L^*&=&X_2(V^*)-g_M^*X_1(V^*)
\end{eqnarray*}
and $I_{app}^*$ is given by \eqref{eq:I_star}.

\begin{remark}
We have explicitly included the leak current in our formulation. 
The leak current is not necessary for the occurrence of the BT and CP 
bifurcations. 
If $g_L=0$ then equations \eqref{eq:param1} becomes
\[ -g_M X_1(V^*) + X_2(V^*)=0 \]
and the solution will go through as above.  However for the BTC bifurcation 
to occur we must have another parameter to vary. We have shown that this 
third parameter can be the leak conductance, $g_L$.  Solving the equations
in a different way shows that the capacitance, $C_M$ could also be used.
\end{remark}
     
\subsection{Implications.}

In the previous section we gave conditions which guarantee  that a BT bifurcation 
can be induced by the variation of two parameters found in our general model: the applied current, $I_{app}$, and the maximal conductance of the M-current $g_M$. Further, if the conditions are met, we gave explicit expressions for the bifurcation point in terms of $g_M$ and $I_{app}$.
Near this bifurcation point the behaviour of the system will be described by the unfolding of the normal form \eqref{normal_form_1} in terms of two parameters.  The normal form and unfolding were first studied by 
\cite{bognadov1975versal,takens1974singularities}. The details can be found also be found in  \cite{Guck_Holmes,kuznetsov2013elements}. A key point for our work is that emanating out of the BT point are three codimension-one bifurcation curves: Hopf bifurcation, saddle homoclinic bifurcation and saddle node (fold) of equilibria. A periodic orbit exists between
the Hopf and homoclinic bifurcation curves, the stability of which depends on the sign of the coefficients $\alpha_2,\beta_2$ in \eqref{normal_form_1}. Thus the emergence of periodic solutions via a Hopf bifurcation can be linked to the presence of the BT point.

In the previous section, we also gave conditions which guarantee that a BTC bifurcation can be induced by $I_{app}$, $g_M$ and the conductance of the leak current, $g_L$. 
The normal form and unfolding for the case considered in Theorem~\ref{Th:BTCpoint} was first studied in \cite{dumortier2006bifurcations}; see also \cite{kuznetsov2005practical,kirst2015fundamental}. There are various possibilities for the bifurcations in the unfolding which are determined by the higher order terms in the normal form. The key results for our analysis are that in the three dimensional parameter space there are two curves of cusp bifurcations and two curves of BT bifurcations with a surface of Hopf bifurcation starting at one BT curve and ending at the other. Near one BT bifurcation the Hopf bifurcation is supercritical (produces an asymptotically stable periodic orbit), while at the other it is subcritical. There is a saddle-node (fold) of limit cycles bifurcation associated with the change in criticality of the Hopf bifurcation. Fixing the value of one parameter (such as the leak conductance, $g_L$,) amounts to taking a two dimensional slice in the three dimensional parameter space. Thus, in general one should expect to see some subset of the bifurcations we have just described.

\section{Numerical Examples}
\label{sec_Numerical}

In this section, we implement three examples with different ranges of $g_M$ corresponding to the range between the BT  and Cusp points. 
We apply our theoretical results and compare them with computations that carried out with \textsf{MATCONT}   \cite{dhooge2003matcont}.
We also construct bifurcation diagrams using \textsf{MATCONT} to explain  the possible behaviour of each example. The labels used these bifurcation diagrams  are summarized in Table~\ref{Table:labels}.
Furthermore, we use the numerical solution of the model in each example to measure the frequency-current (${\rm{ F/I}}$) curves which illustrate the neuronal excitability class.

\begin{table}
    \centering
    \begin{tabular}{|c|c|}
    \hline
    Label & Bifurcation\\
    \hline
    LP   & limit point (fold/saddle-node) of equilibria \\
      red/black H  & super/subcritical Andronov-Hopf  \\
      LPC &  limit point (fold) of cycles\\
    \hline
    BT & Bogdanov-Takens \\ 
    CP & Cusp \\ 
    GH & generalized Hopf (Bautin) \\
    \hline
    \end{tabular}
    \caption{Labels used to mark bifurcation points in one and two parameter bifurcation diagrams.}
    \label{Table:labels}
\end{table}

\pdfbookmark[subsection]{Example 1: Wang--Buzsaki model}{Example 1: Wang--Buzsaki model}
\subsection*{Example 1.} In \cite{wang1996gamma}, Wang and Buz\'saki proposed a model to study the fast neuronal oscillations in the neocortex and hippocampus during behavioral arousal.  The model is based on an inhibitory basket cell in rat hippocampus.
The model with the inclusion of the $M-$current can be written as 
\begin{align}
	\label{WB_model}
	C_m\frac{{dV}}{{dt}} &= {I_{app}} - {g_L}(V - {V_L}) - {g_M}w(V - {V_K}) - {g_{Na}}m_\infty ^3\left( V \right)h(V - {V_{Na}})\nonumber\\
	&~ - {g_K}{n^4}(V - {V_K}),\nonumber\\
	\frac{{dw}}{{dt}} &= \frac{1}{{{\tau _w}(V)}}\left( {{w_\infty }(V) - w} \right),\\
	\frac{{d\sigma}}{{dt}} &= \frac{\phi}{{{\tau _{\sigma}}(V)}}\left( {{{\sigma}}_\infty }(V) - \sigma \right),\quad \sigma\in\{h,n\},\nonumber
\end{align}
supplemented by the dynamics for the gating variables $h$ and $n$ as in (\ref{Model}). 
Parameter values and other details of the model are given in the Appendix.

\sloppy Figure~\ref{fig:WB_BT_CP} shows the contour plot of  equations~\eqref{BT:Con_A1}, \eqref{BT:Con_A2} and \eqref{eq:dd_I_infter}. In Figure~\ref{fig:WB_BT}, there are two intersections 
of equations \eqref{BT:Con_A1} and \eqref{BT:Con_A2} at
$g_M=-0.0368$ and $g_M=0.1455$. Consequently, there are two BT points: $(V^*,I_{app}^*,g_M^*)=(-40.9926,-6.7925,-0.0368)$ and $(-59.6978,0.2000,0.1455)$. The bio-physically permissible point is the latter  one where $g_M>0$. 
Moreover,  there is one intersection of (\ref{BT:Con_A2}) and (\ref{eq:dd_I_infter}) implying the  Cusp point is $(\widehat{V},\widehat{I}_{app},\widehat{g}_M)=(-51.5531,1.2382,2.3316)$, see Figure \ref{fig:WB_BT}. 

The analysis of section~\ref{sec:exist} shows that of the three curves, only the one defined by eq.~\ref{BT:Con_A1} depends on $g_L$. Further, the representation \eqref{eq:param1} of this equation and the properties of the M-current show that increasing $g_L$ will move this curve downward. Given the shape of the curves in
Figure~\ref{fig:WB_BT}, it is clear that increasing $g_L$ will move the BT and CP points closer together, and for sufficiently large $g_L$ we should obtain a single intersection point of all three curves, corresponding to a BTC point.
Figure \ref{fig:WB_BTC_A} confirms that when we increase $g_L$ to $0.7507$, we find the (approximate) BTC  point  $(-46.6416,7.75907,-0.0166046)$.

\begin{figure}[htb!]
\centering
\begin{subfigure}{.45\textwidth}
  \centering
  \includegraphics[width=0.95\linewidth]{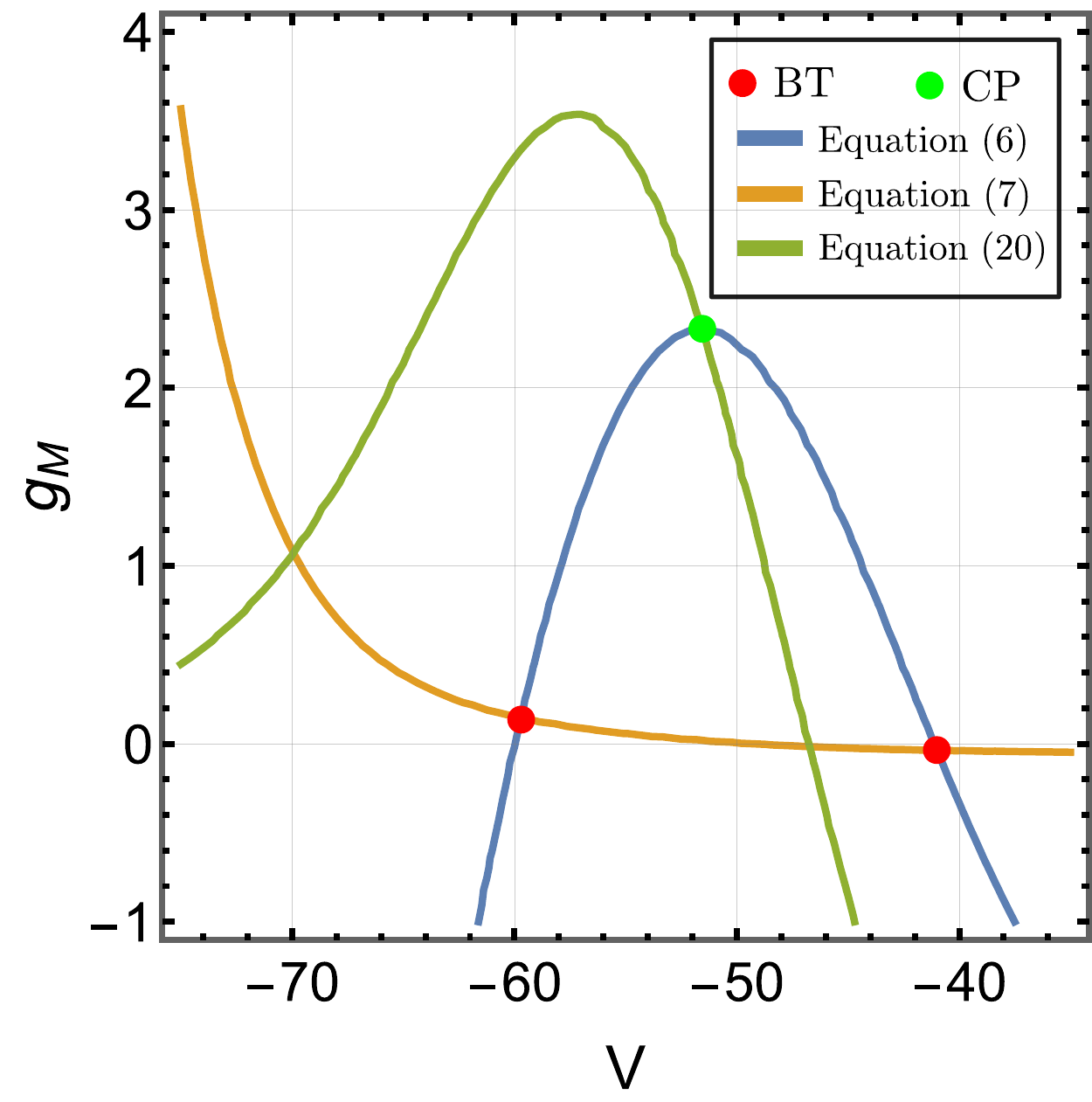}
 \caption{BT and CP points}
 \label{fig:WB_BT}
\end{subfigure}
\begin{subfigure}{.45\textwidth}
  \centering
  \includegraphics[width=0.94\linewidth]{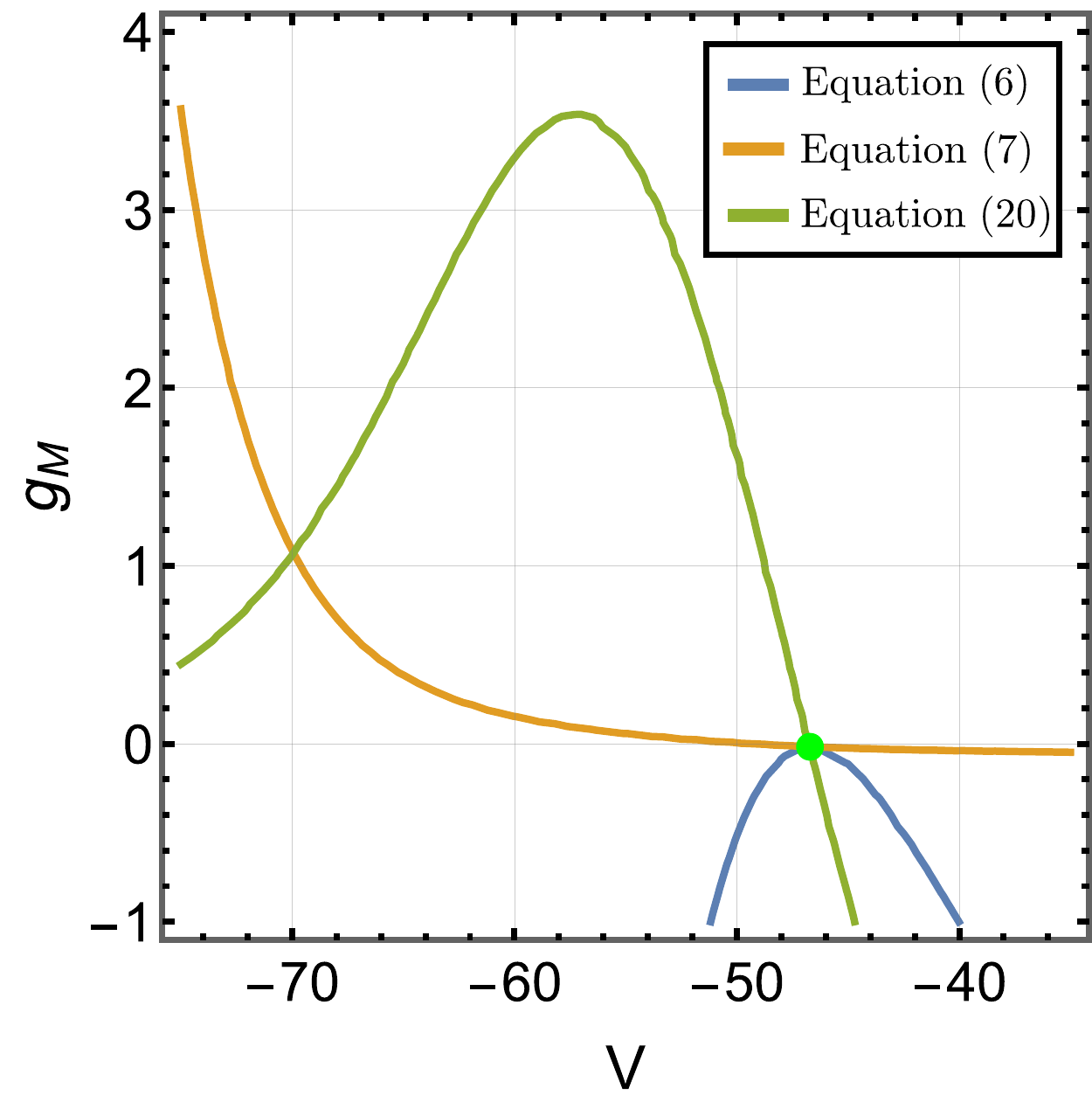}
 \caption{BTC point}
   \label{fig:WB_BTC_A}
\end{subfigure}
\caption{Existence of codimension two and three bifurcation points in the Wang-Buz\'saki model (\ref{WB_model}), with the parameter values given in Table \ref{WB_parameters}. (a) The conditions given by equations~ \eqref{BT:Con_A1}, \eqref{BT:Con_A2} and \eqref{eq:dd_I_infter} are plotted in the $V,g_M$ space. The two intersection points (red dots) of the conditions in Theorem~\ref{Th:BTpoint} show that there are two BT points in the model. 
The one intersection point (green dot) of the conditions in Theorem~\ref{Th:BTCpoint} show the existence of one Cusp point; (b) The three conditions are plotted when the leak conductance is increased to $g_L=0.7507$. The intersection point (green dot) corresponds to the BTC point.}
\label{fig:WB_BT_CP}
\end{figure}

\begin{figure}[htb!]
\centering
\begin{subfigure}{.40\textwidth}
  \centering
  \includegraphics[width=0.95\linewidth]{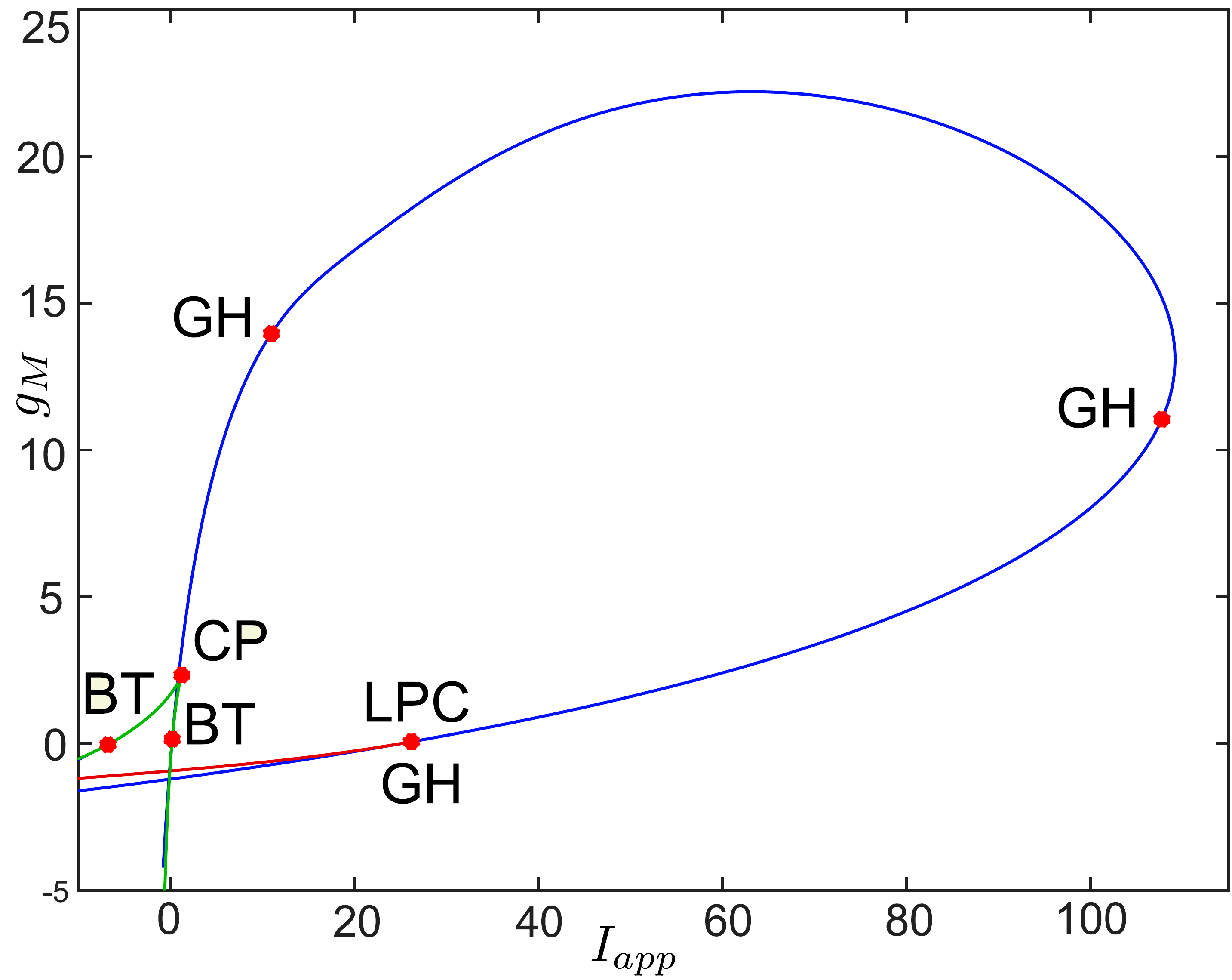}
  \caption{}
  \label{fig:WB_model_gMIapp_a}
\end{subfigure}
\begin{subfigure}{.40\textwidth}
  \centering
  \includegraphics[width=0.95\linewidth]{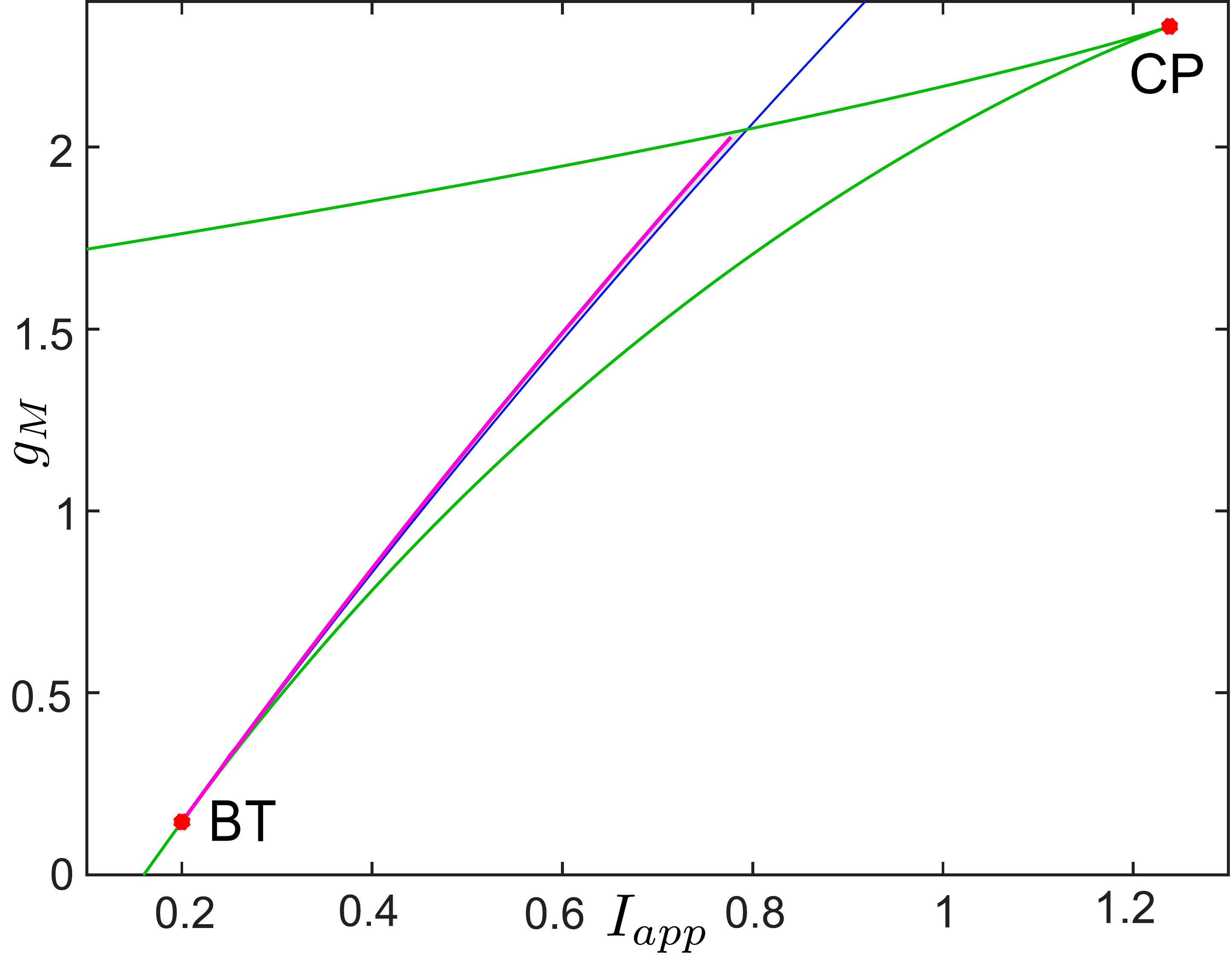}
   \caption{}
 \label{fig:WB_model_gMIapp_b}
\end{subfigure}
\caption{Bifurcation diagram in $I_{app},g_M$ parameter space for the Wang--Buz\'saki model (\ref{WB_model}). Green curves are limit point (fold/saddle-node) bifurcations of equilibria, blue are Andronov-Hopf bifurcations, magenta are homoclinic bifurcations and red  are limit point (fold) bifurcations of limit cycles. Co-dimension two bifurcation point labels are described in Table~\ref{Table:labels}.
}
\label{fig:WB_model_gMIapp}
\end{figure}

We use  the MATLAB numerical continuation package \textsf{MATCONT}   \cite{dhooge2003matcont} to verify the theoretical results and supplement them by  numerical bifurcation diagrams. 
From  \textsf{MATCONT}, we find two BT  points $(V^*,I_{app}^*,g_M^*)=(-59.698,0.2,0.146)$ and $(-40.992,-6.792,-0.036)$ (we omit this point) and one  Cusp point $(\widehat{V},\widehat{I}_{app},\widehat{g}_M)=(-51.553,1.238,2.332)$ with parameter values in Table \ref{WB_parameters}. 
This is consistent with our results in Figure \ref{fig:WB_BT_CP}.

Now, we discuss  the switch in the model neuronal excitability class as $g_M$ increases. 
We plot a bifurcation diagram in $I_{app},g_M$ parameter space for Wang--Buz\'saki model (\ref{WB_model}) in Figure \ref{fig:WB_model_gMIapp}.  As expected from normal form analysis, there is a curve of homoclinic bifurcations, a curve of Hopf bifurcation and a curve of saddle-node of equilibria emanating from the BT point. The Hopf is subcritical and thus an unstable periodic orbit exists for any parameters between the homoclinic and Hopf curves.
See Figure \ref{fig:WB_model_gMIapp_b}.
These curves are associated with the transition in the neuronal excitability class
and show three cases.  
\begin{itemize}
    \item $g_M<g_M^*$: In Figure \ref{fig:WB_model_V_Iapp_a}, when  $g_M=0$ and $I_{app}<0.16$, there exists a stable equilibrium point, that determines the resting state, and two unstable equilibria. 
As the applied current increases, the stable and one unstable
fixed points collide in a saddle-node bifurcation point (``LP").
Consequently,  a limit cycle is born simultaneously and emanates from the LP , that is, the limit cycle is created via a
saddle-node on invariant circle bifurcation  (``SNIC"). 
As expected, the oscillations on the limit cycle appear with arbitrarily slow frequency (see Figure \ref{fig:WB_model_FI_Im1}a),  indicating  Class-I excitability \cite{izhikevich2007dynamical,rinzel1998analysis}.

  \item  $g_M>\widehat{g}_M$: For large enough $g_M$, a different sequence of bifurcations is observed.
  In Figure \ref{fig:WB_model_V_Iapp_c}, when  $g_M=3$, 
  at $I_{app}=1.$, a limit point bifurcation of cycles, ``LPC", occurs giving rise to one unstable and one stable periodic orbit. Then, at $I_{app}=1.1416$, the unstable periodic orbit disappears in a subcritical Hopf bifurcation (subHopf) of the lone equilibrium point, destabilizing it.
  Consequently, firing with a positive frequency appears via LPC, and hence, neuronal excitability  Class-II   occurs \cite{izhikevich2007dynamical,rinzel1998analysis}. See Figures  \ref{fig:WB_model_V_Iapp_c}-\ref{fig:WB_model_FI_Im1}c; 

 \item  $g_M^*<g_M<\widehat{g}_M$: In this case, both subHopf and LP  exist. The stable equilibrium point disappears by subHopf and  the LP   occurs when two unstable equilibria collide.
The model dynamics exhibit two different patterns, which are only distinguished by the bifurcations of the unstable periodic orbit(s). (i) When $g_M^*<g_M<2.1$, 
see Figures \ref{fig:WB_model_FI_Im2}a-\ref{fig:WB_model_FI_Im2}c and in Figure \ref{fig:WB_model_V_Iapp_b},
an unstable limit cycle is created via a homoclinic bifurcation (the magenta curve in Figure~\ref{fig:WB_model_gMIapp_b}) and disappears in the subHopf. 
In this case, the stable limit cycle appears via an LPC with a different unstable limit cycle which disappears via homoclinic orbit bifurcation (not shown in  Figure~\ref{fig:WB_model_gMIapp_b}). 
(ii) When $2.1\lesssim g_M<\widehat{g}_M$, the sequence
of bifurcations is very similar to that for $g_M>\widehat{g}_M$. An LPC bifurcation creates and
unstable and stable periodic orbit. The former is lost
in the subHopf, see  Figure \ref{fig:WB_model_FI_Im2}c.
For all $g_M\in (g_M^*,\widehat{g}_M)$ there is a region of bistability between a stable limit cycle and a stable equilibrium point, between the LPC and subHopf bifurcations. 
Consequently, when  $g_M^*<g_M<\widehat{g}_M$, a neuronal excitability  Class-II      occurs \cite{izhikevich2007dynamical,rinzel1998analysis}.  
\end{itemize}
Therefore, 
the model neuronal excitability type switches from  Class-I  to Class-II when the conductance of the M-current $g_M$ passes through the BT point.

Now let us consider the effect of the leak conductance, $g_L$. As shown above, increasing $g_L$ monotonically decreases the $g_M$ value  at the bio-physically permissible BT point. This means that the range of values of $g_M$ where the model has class I excitability will be decreased. Equivalently, smaller changes of $g_M$ are needed to switch the model from class I to class II.  If $g_L$ is increased enough then $g_M^*$ may become negative, in which case the model will exhibit class II excitability regardless of the value of $g_M$.

 \begin{figure}[htb!]
\centering
\begin{subfigure}{.315\textwidth}
  \centering
  \includegraphics[width=0.95\linewidth]{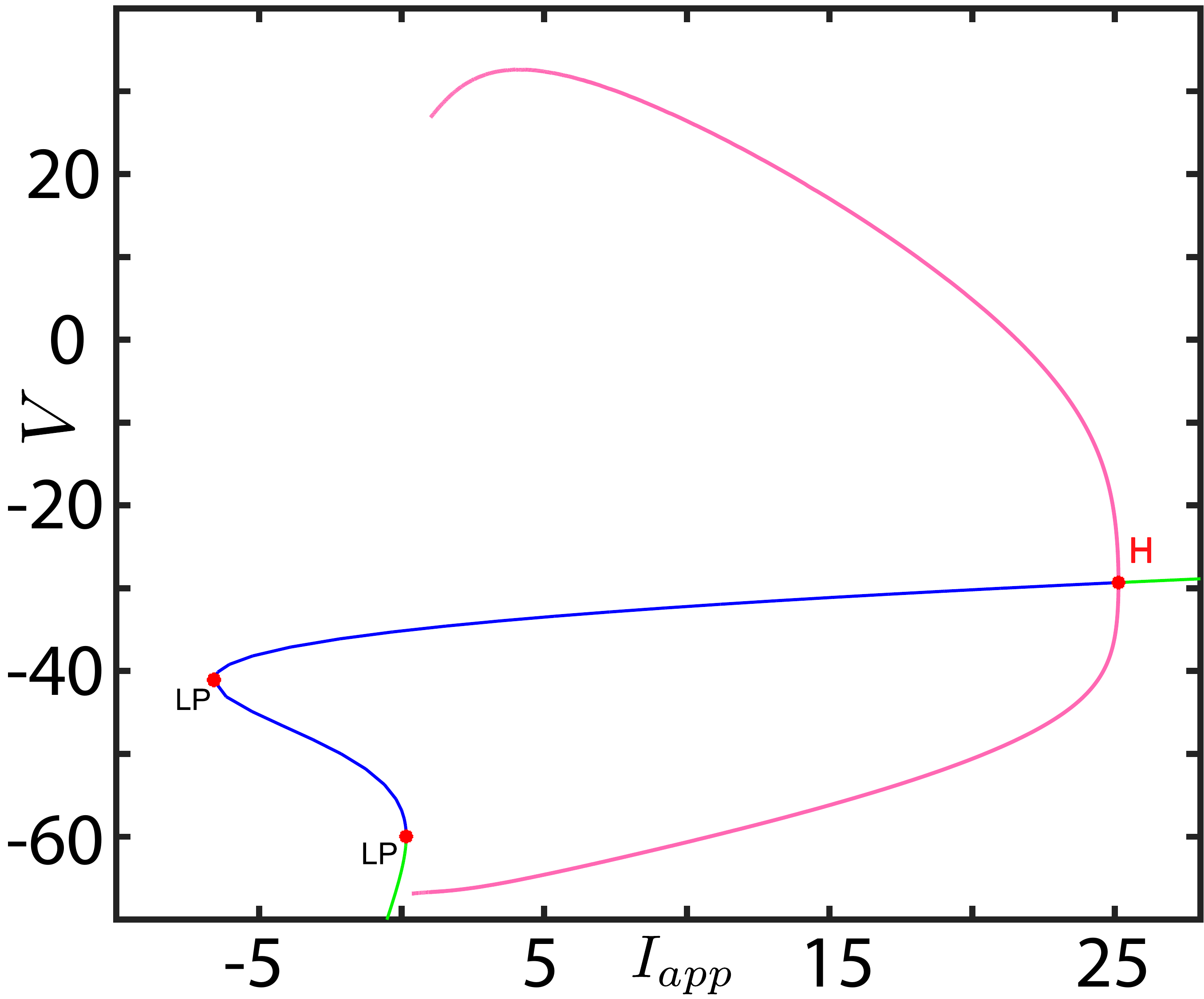}
    \caption{$g_M=0$}
    \label{fig:WB_model_V_Iapp_a}
\end{subfigure}
\begin{subfigure}{.315\textwidth}
  \centering
  \includegraphics[width=0.95\linewidth]{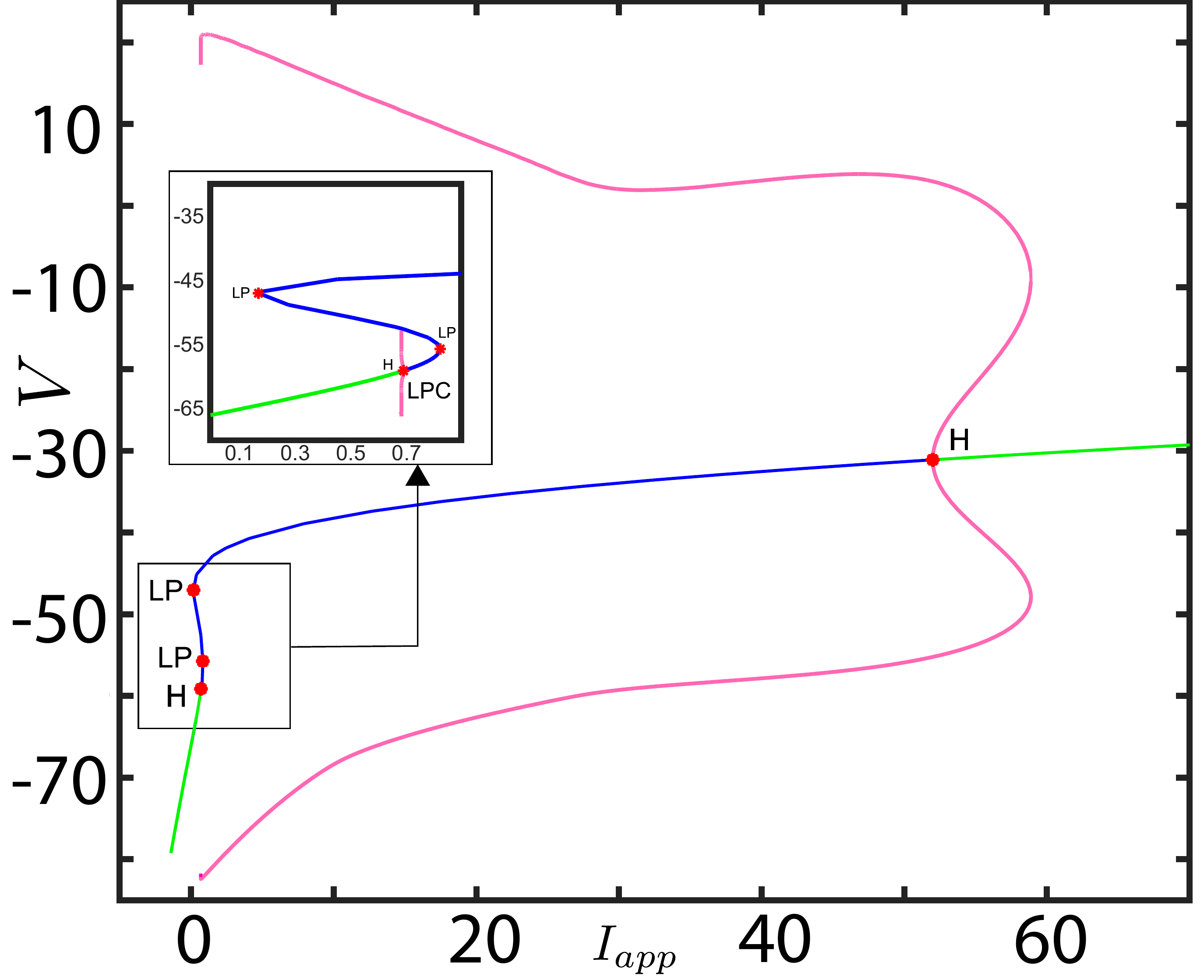}
   \caption{$g_M=1.75$}
       \label{fig:WB_model_V_Iapp_b}
\end{subfigure}
\begin{subfigure}{.315\textwidth}
  \centering
  \includegraphics[width=0.95\linewidth]{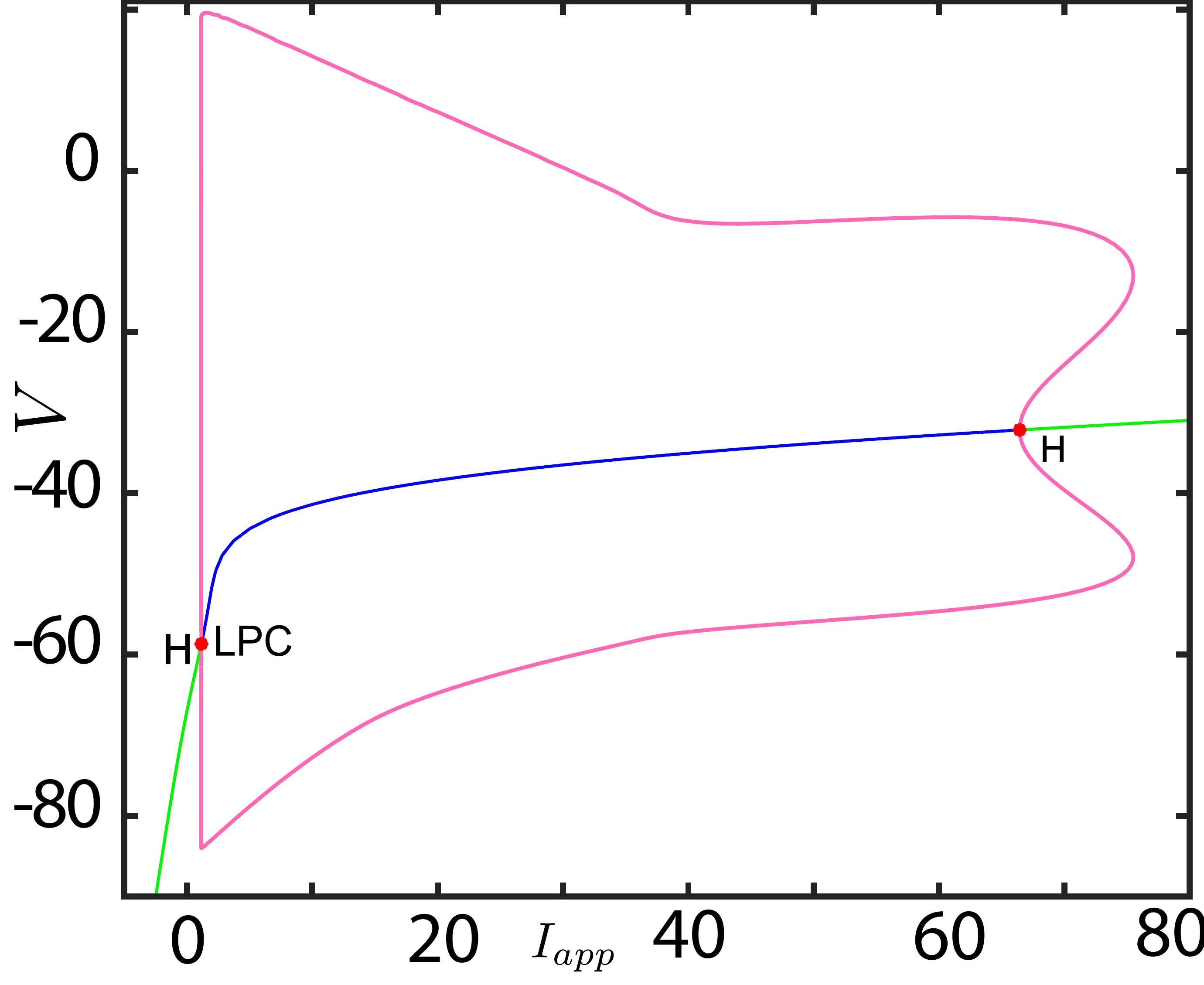}
   \caption{$g_M=3$}
       \label{fig:WB_model_V_Iapp_c}
\end{subfigure}

\caption{One parameter bifurcation diagrams for Wang--Buz\'saki model (\ref{WB_model}), showing the change in bifurcation structure as $g_M$ is varied. (a) $g_M<g_M^*$ (the value at the BT point); (b) $g_M^*<g_M<\widehat{g}_M$ 
(c) $g_M>\widehat{g}_M$ (the value at the CP point). Green/blue curves show stable/unstable equilibria. Pink curves show maxima/minima of periodic orbits. Co-dimension one bifurcation point labels are described in Table~\ref{Table:labels}.
}
\label{fig:WB_model_V_Iapp}
\end{figure}

\begin{figure}[htb!]
\centering
  \includegraphics[width=0.95\linewidth]{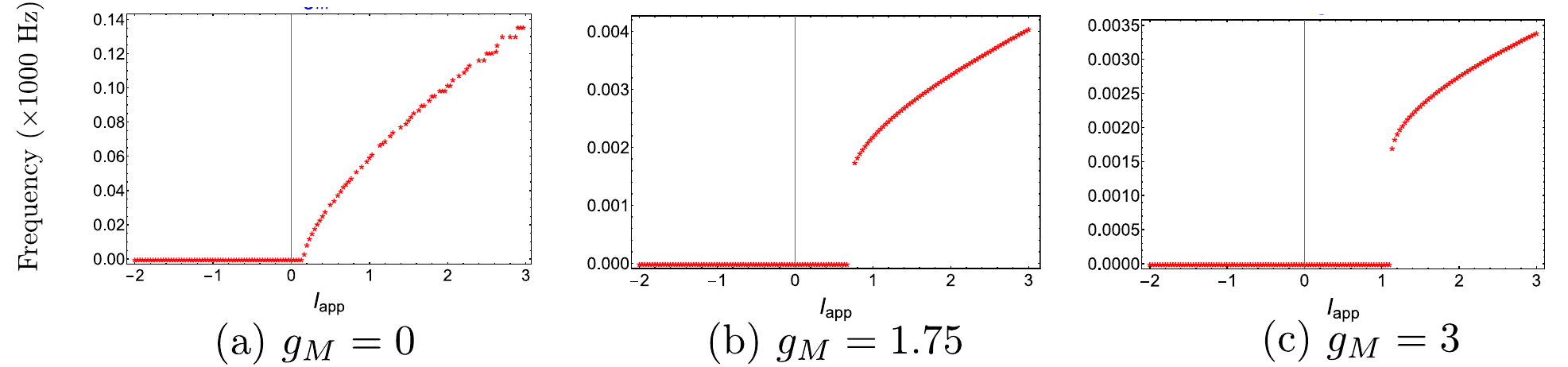}
    \caption{${\rm{ F/I}}$ curves of Wang--Buz\'saki model (\ref{WB_model}) corresponding to Figure~\ref{fig:WB_model_V_Iapp}.
    (a) $g_M<g_M^*$ (the value at the BT point); (b) $g_M^*<g_M<\widehat{g}_M$;
(c) $g_M>\widehat{g}_M$ (the value at the CP point).}
    \label{fig:WB_model_FI_Im1}
\end{figure}

\begin{figure}[htb!]
\centering
  \includegraphics[width=0.95\linewidth]{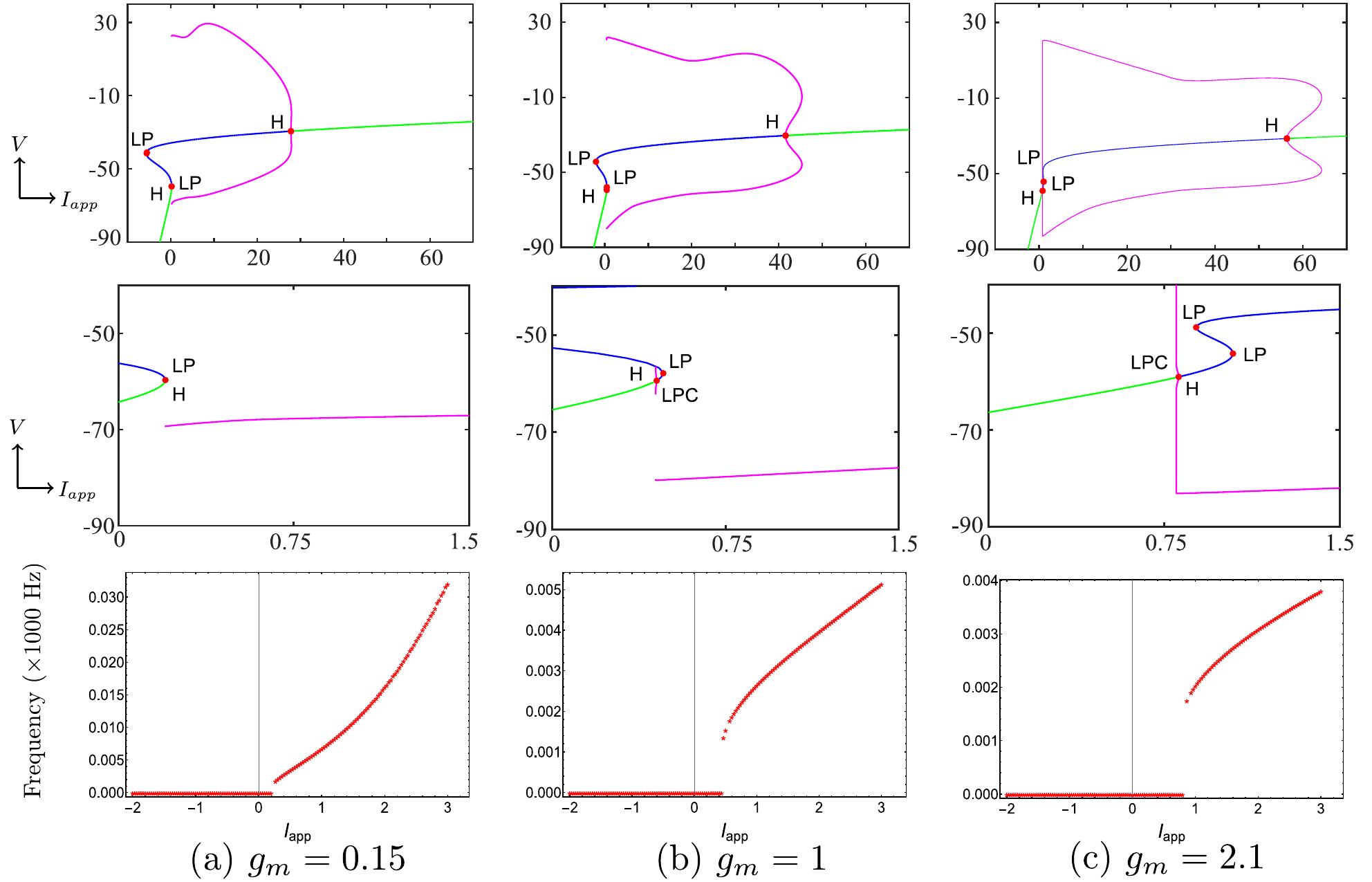}
    \caption{Top row/middle row: Details of the change in the bifurcation structure of the Wang--Buz\'saki model (\ref{WB_model}) when $g_M$ is varied between the BT point and the cusp point. Green/blue curves show stable/unstable equilibria. Pink curves show maxima/minima of periodic orbits. Co-dimension one bifurcation point labels are described in Table~\ref{Table:labels}. Bottom row: corresponding F/I curves.}
    \label{fig:WB_model_FI_Im2}
\end{figure}

\pdfbookmark[subsection]{Example 2: Stiefel Model}{Example 2: Stiefel Model}
\subsection*{Example 2.} In     \cite{stiefel2009effects}, Stiefel et al. proposed 
a single-compartmental neuron model that included biophysically realistic mechanisms for
neuronal spiking based on Hodgkin and Huxley ionic currents. 
The single-compartment  Stiefel Model can be written as: 
\begin{align}
	\label{Stiefel_model}
	C_m\frac{{dV}}{{dt}}& = {I_{app}} - {g_L}(V - {V_L}) - {g_M}w(V - {V_K}) - {g_{Na}}m_\infty ^3\left( V \right)h(V - {V_{Na}})\nonumber\\
	&~ - {g_{K}}{n^4}(V - {V_K}),\\
	\frac{{d\sigma}}{{dt}} &= \frac{\phi_{\sigma} }{{{\tau _{\sigma}}(V)}}\left( {{{\sigma}}_\infty }(V) - \sigma \right),\quad \sigma\in\{w,h,n\},\nonumber
\end{align}
Parameter values and other details can be found in the Appendix.

\begin{figure}[htb!]
\centering
\begin{subfigure}{.45\textwidth}
  \centering
  \includegraphics[width=0.95\linewidth]{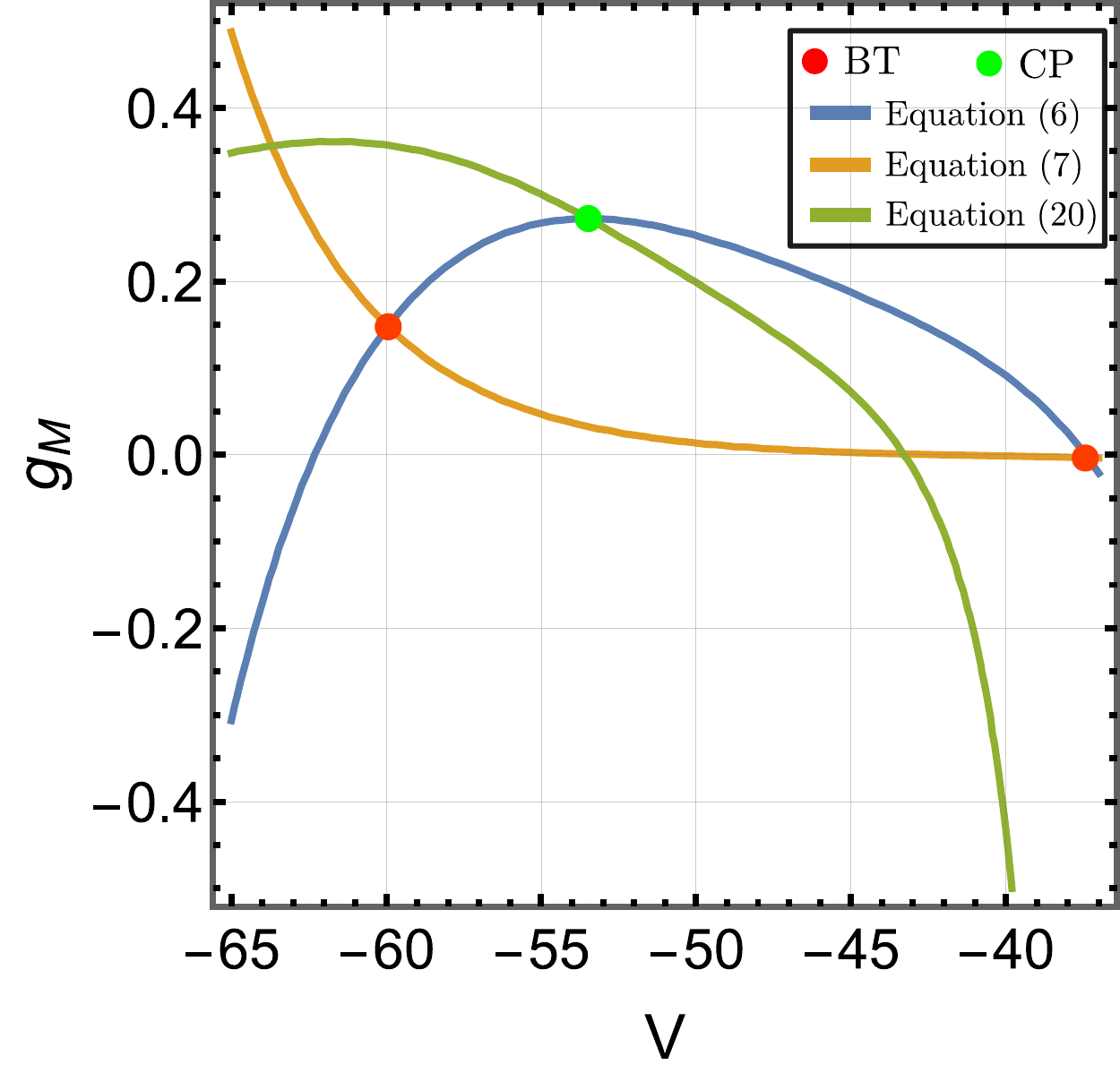}
 \caption{BT and CP points}
 \label{fig:Stiefel_BT}
\end{subfigure}
\begin{subfigure}{.46\textwidth}
  \centering
  \includegraphics[width=0.92\linewidth]{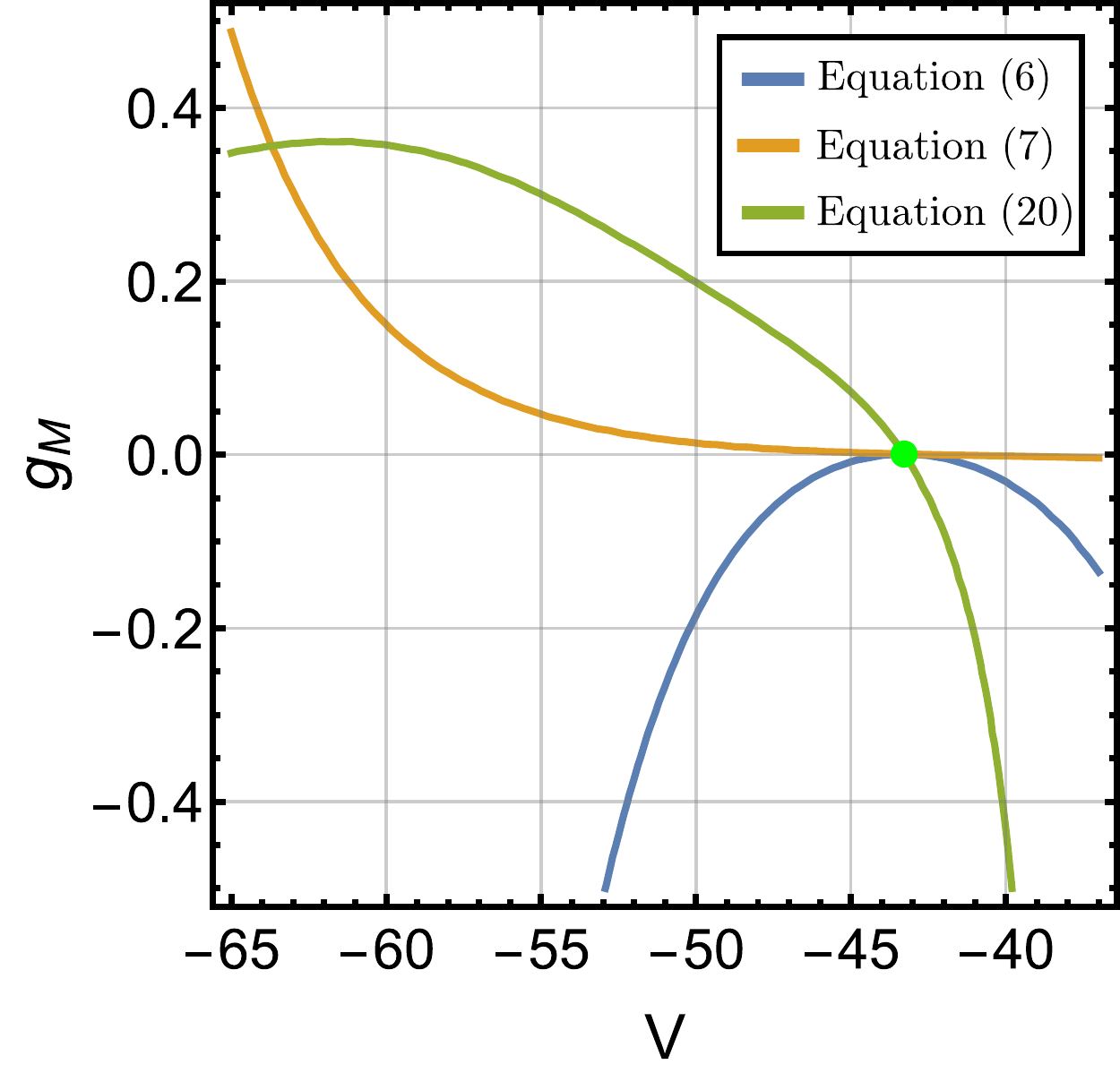}
 \caption{BTC point}
   \label{fig:Stiefel_BTC_A}
\end{subfigure}
\caption{Existence of codimension two and three bifurcation points in the Stiefel model \eqref{Stiefel_model}, with the parameter values given in Table \ref{Stiefel_parameters}. (a) The conditions given by equations~ \eqref{BT:Con_A1}, \eqref{BT:Con_A2} and \eqref{eq:dd_I_infter} are plotted in the $V,g_M$ space. The two intersection points (red dots) of the conditions in Theorem~\ref{Th:BTpoint} show that there are two BT points in the model. 
The one intersection point (green dot) of the conditions in Theorem \ref{Th:BTCpoint} shows the existence of one Cusp point; (b) The three conditions are plotted when the leak conductance is increased to $g_L=0.3785$. The intersection point (green dot) corresponds to the BTC point.}
\label{fig:Stiefel_BT_CP}
\end{figure}

Solving equations (\ref{BT:Con_A1}), (\ref{BT:Con_A2}) and (\ref{eq:dd_I_infter}) leads to the BT  point $(V^*,I_{app}^*,g_M^*)=(-59.9344,-0.0707,0.1482)$ and  Cusp point  $(\widehat{V},\widehat{I}_{app},\widehat{g}_M)=(-53.4754,0.0216,0.2724)$, see Figures \ref{fig:Stiefel_BT}. A second BT point occurs for $g_M<0$. These results are consistent with those found in \textsf{MATCONT}. Applying the analysis of section~\ref{sec:exist} to this model also shows that increasing $g_L$ should lead to a BTC point.
This is confirmed in Figure \ref{fig:Stiefel_BTC_A}.
We find the  BTC  point $(-43.1385,2.9461,0.0008)$ when we increase $g_L$ to $0.3785$.
As in the previous example, the  neuronal excitability type switches from Class-I to II as the conductances of the M-current increases, Class-I when $g_M<g_M^*$ and Class-II otherwise, see Figures  \ref{fig:Stiefel_model_gMIapp}, \ref{fig:Stiefel_model_V_Iapp} and \ref{fig:Stiefel_model_FI_Im1}. 
Although the range $(g_M^*,\widehat{g}_M)$  is much smaller than for Example 1, the model (\ref{Stiefel_model}) exhibits a similar behaviour in this range, see Figures   \ref{fig:WB_model_FI_Im2}.

\begin{figure}[htb!]
\centering
\begin{subfigure}{.40\textwidth}
  \centering
  \includegraphics[width=0.98\linewidth]{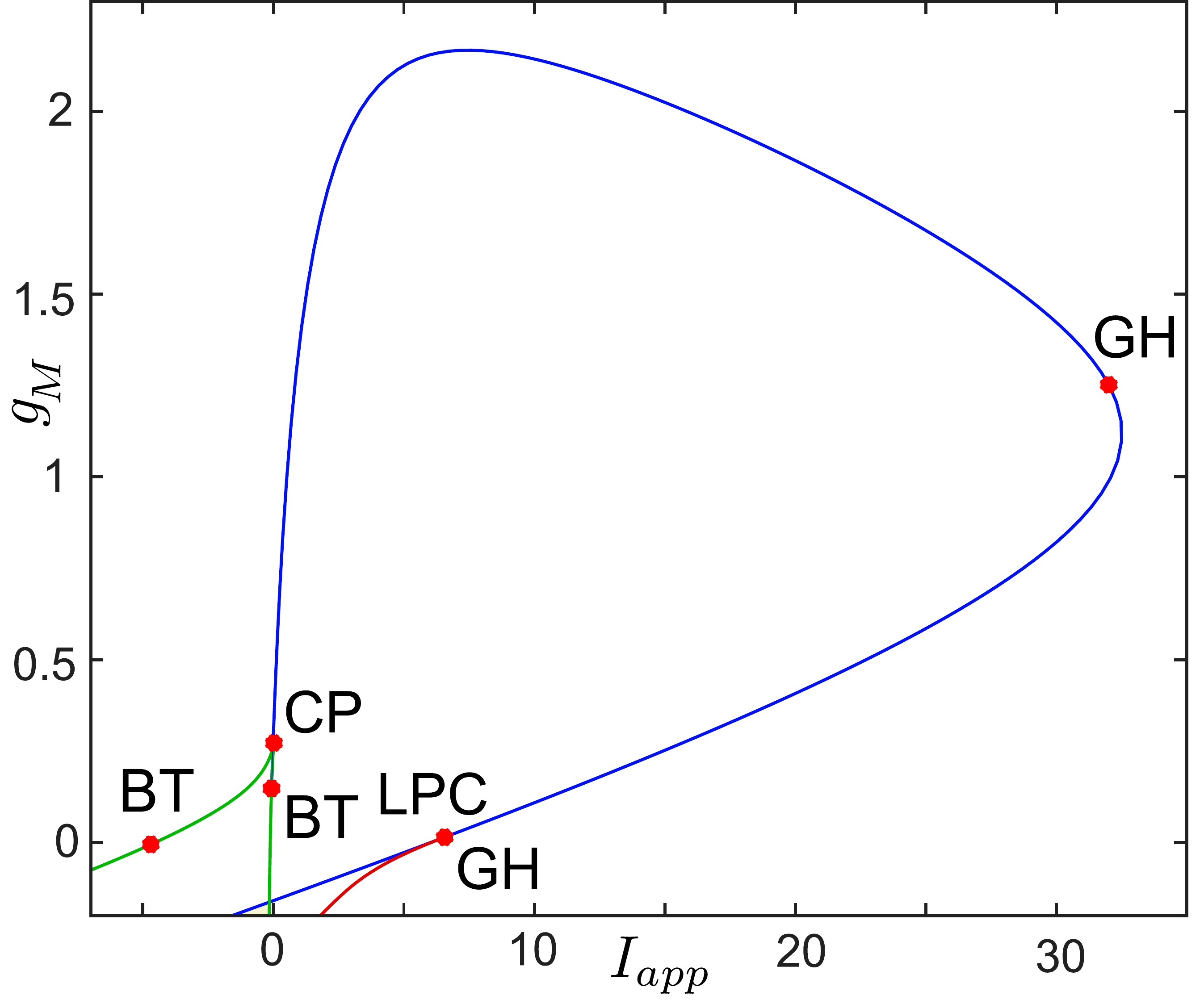}
    \caption{}
    \label{fig:Stiefel_model_gMIapp_a}
\end{subfigure}
\begin{subfigure}{.40\textwidth}
  \centering
  \includegraphics[width=0.95\linewidth]{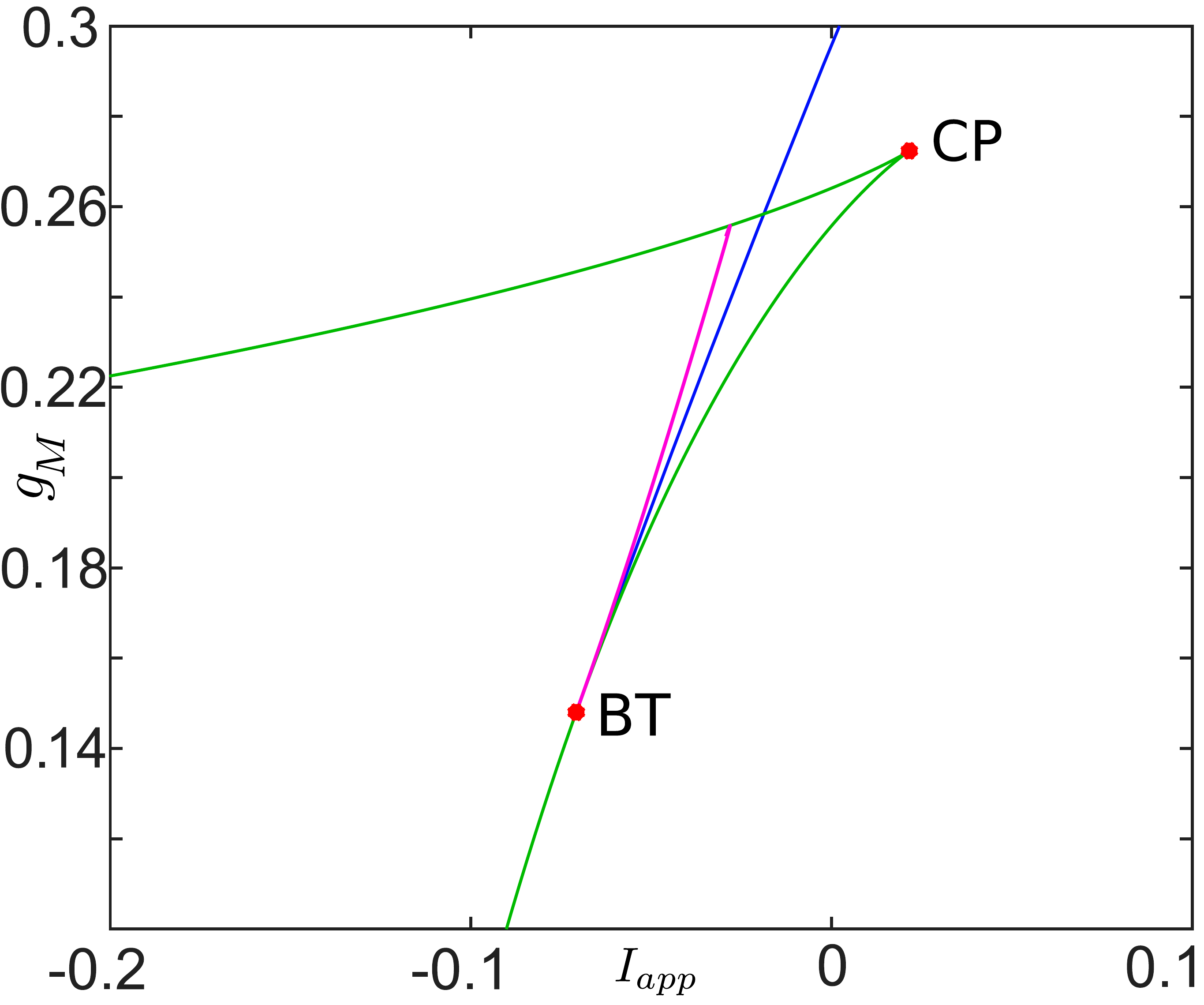}
   \caption{}
      \label{fig:Stiefel_model_gMIapp_b}
\end{subfigure}
\caption{Bifurcation diagram in $I_{app},g_M$ parameter space for Stiefel model  (\ref{Stiefel_model}). Green curves are limit point (fold/saddle-node) bifurcations of equilibria, blue are Andronov-Hopf bifurcations, magenta are homoclinic bifurcations and red  are limit point (fold) bifurcations of limit cycles (LPC). Co-dimension two bifurcation point labels are described in Table~\ref{Table:labels}.
}
\label{fig:Stiefel_model_gMIapp}
\end{figure}

 \begin{figure}[htb!]
\centering
\begin{subfigure}{.315\textwidth}
  \centering
  \includegraphics[width=0.95\linewidth]{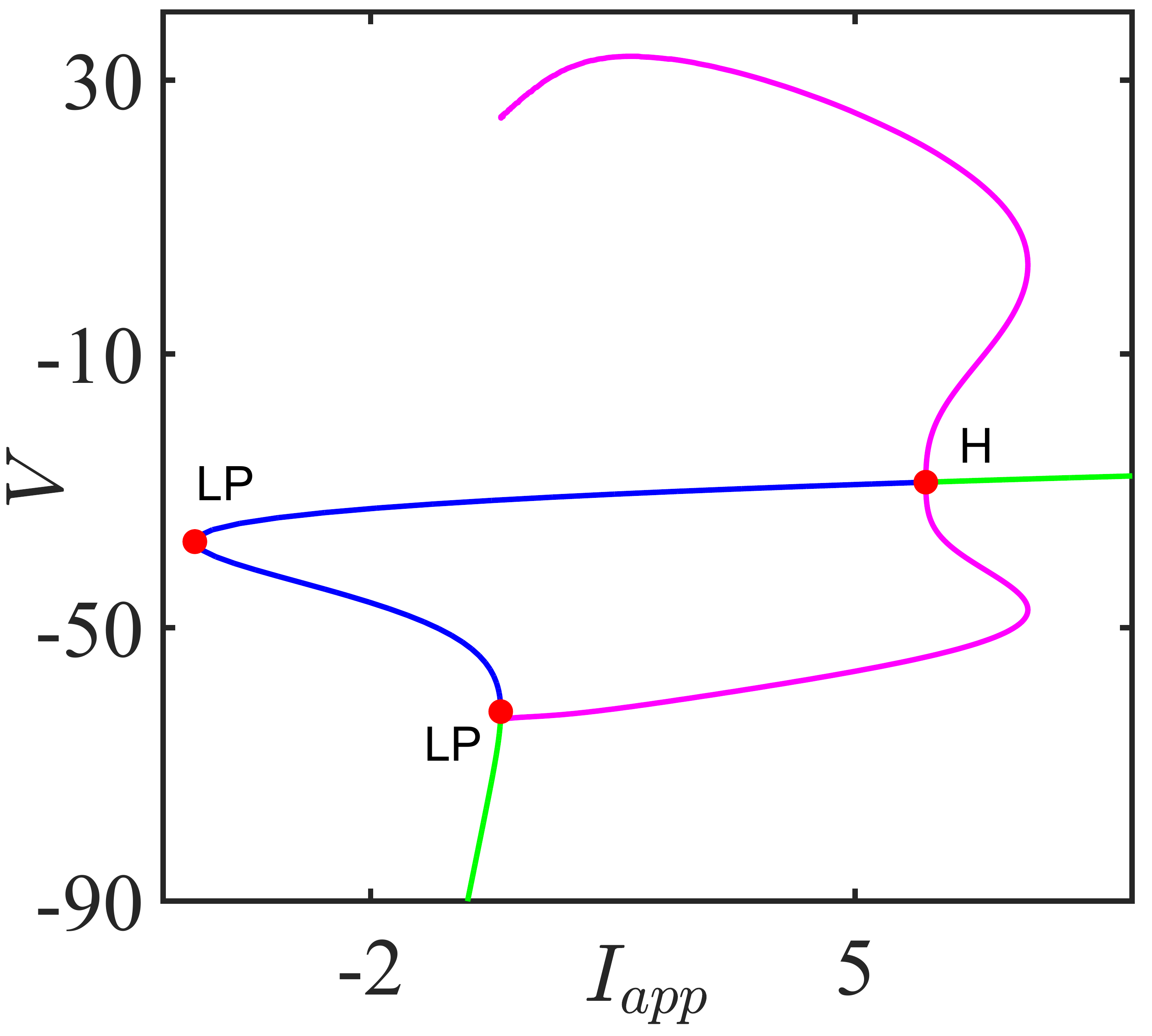}
    \caption{$g_M=0$}
    \label{fig:Stiefel_model_V_Iapp_a}
\end{subfigure}
\begin{subfigure}{.315\textwidth}
  \centering
  \includegraphics[width=0.95\linewidth]{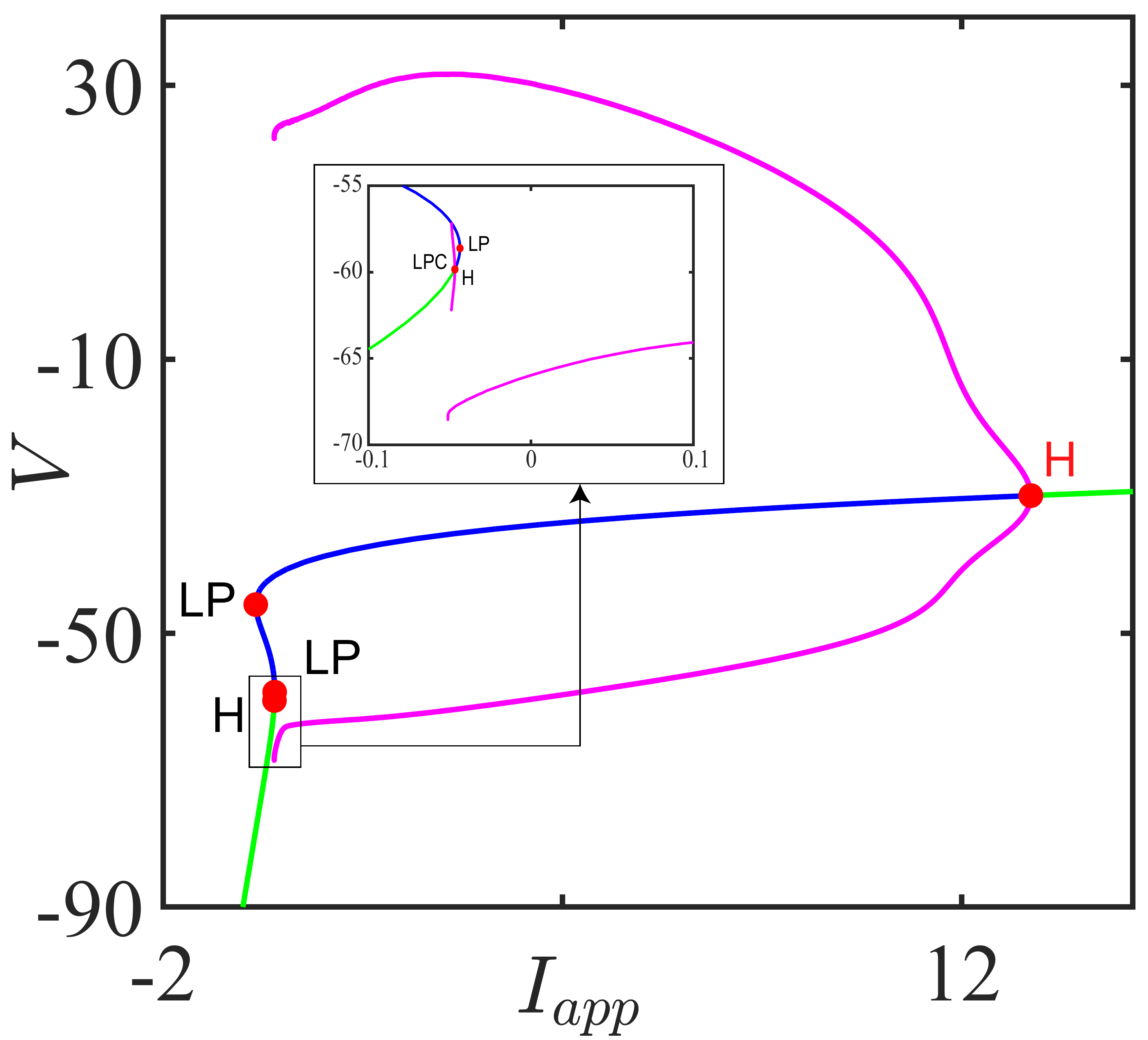}
  \caption{$g_M=0.2$}
      \label{fig:Stiefel_model_V_Iapp_b}
\end{subfigure}
\begin{subfigure}{.315\textwidth}
  \centering
  \includegraphics[width=0.95\linewidth]{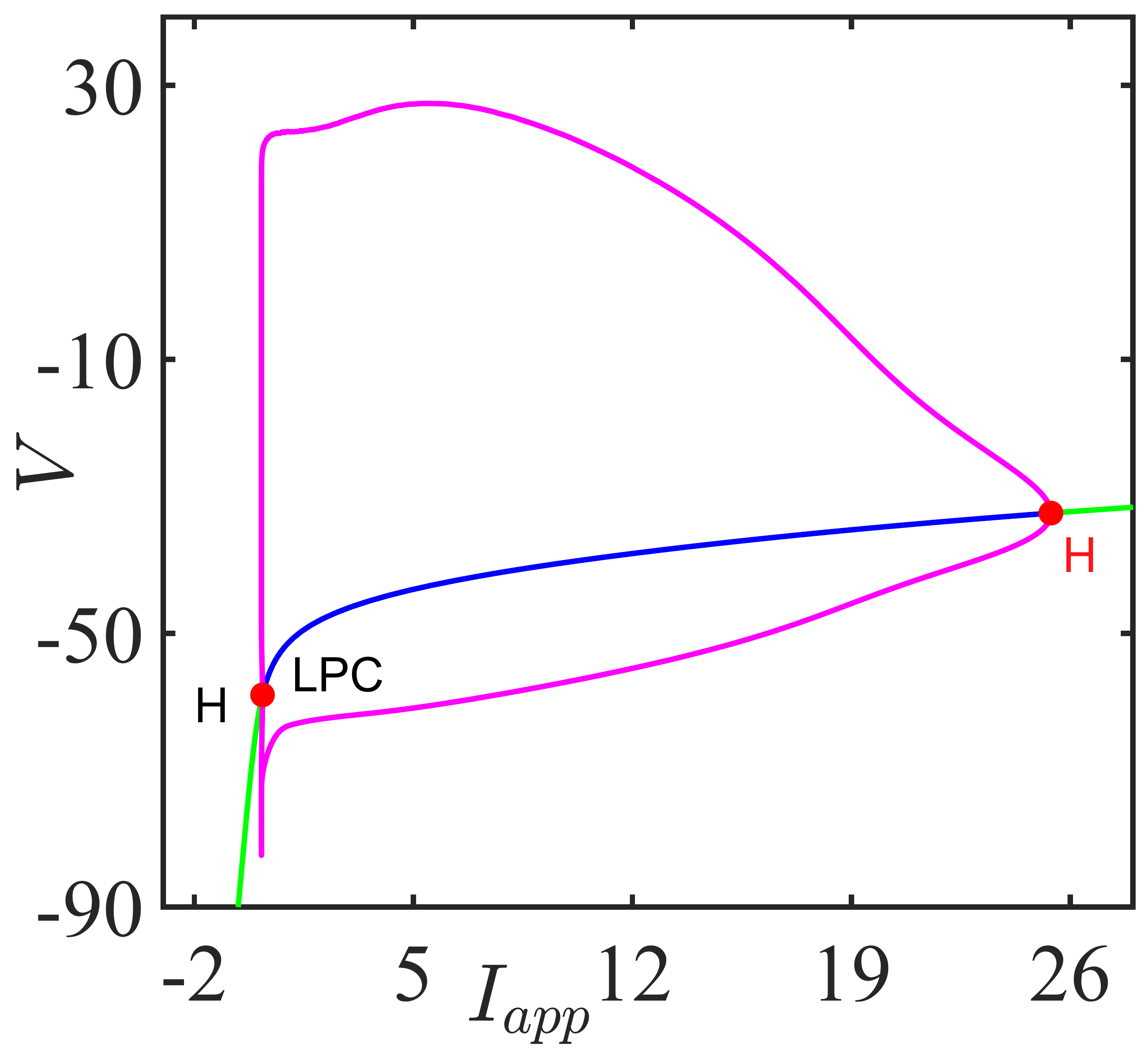}
  \caption{$g_M=0.6$}
      \label{fig:Stiefel_model_V_Iapp_c}
\end{subfigure}

\caption{One parameter bifurcation diagrams for the Stiefel model  (\ref{Stiefel_model}), showing the change in bifurcation structure as $g_M$ is varied. (a) $g_M<g_M^*$ (the value at the BT point); (b) $g_M^*<g_M<\widehat{g}_M$;
(c) $g_M>\widehat{g}_M$ (the value at the CP point).  Green/blue curves show stable/unstable equilibria. Pink curves show maxima/minima of periodic orbits. Co-dimension one bifurcation point labels are described in Table~\ref{Table:labels}.
}
\label{fig:Stiefel_model_V_Iapp}
\end{figure}

\begin{figure}[htb!]
\centering
  \includegraphics[width=0.95\linewidth]{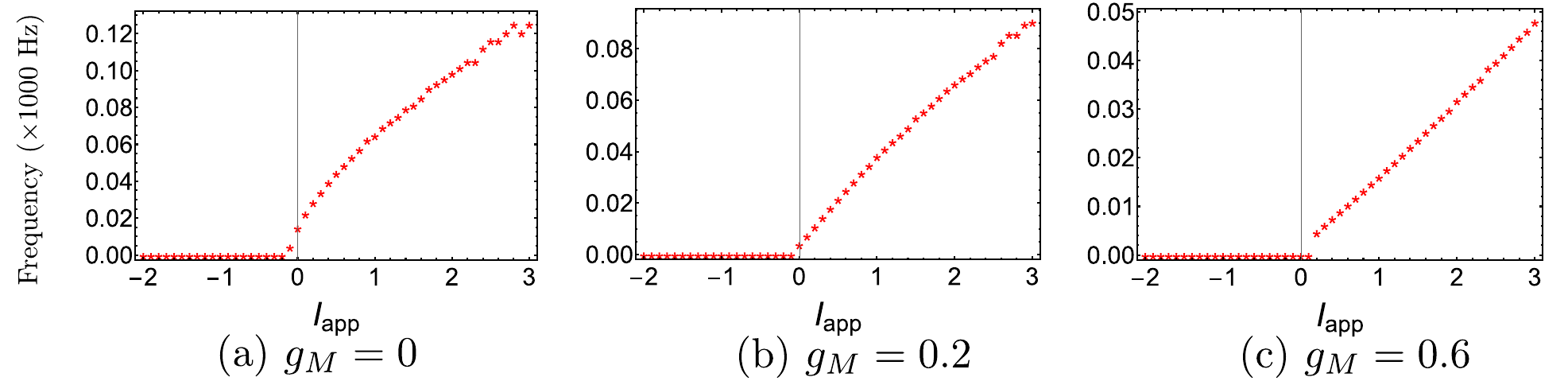}
   \caption{${ \rm F/I}$ curves of Stiefel model   (\ref{Stiefel_model}) corresponding to Figure~\ref{fig:Stiefel_model_V_Iapp}. (a) $g_M<g_M^*$ (the value at the BT point); (b) $g_M^*<g_M<\widehat{g}_M$;
(c) $g_M>\widehat{g}_M$ (the value at the CP point).}
    \label{fig:Stiefel_model_FI_Im1}
\end{figure}

\pdfbookmark[subsection]{Example 3: Reduced Traub-Miles model}{Example 3: Reduced Traub-Miles model}
\subsection*{Example 3.}  The Reduced Traub-Miles  (RTM) Model is a substantial simplification of a model of a pyramidal excitatory cell in rat hippocampus due to Traub and Miles \cite{traub1991neuronal}.
The RTM model with the M-current can be written as \cite{olufsen2003new} 
\begin{align}
	\label{RTM_model}
	C_m\frac{{dV}}{{dt}} &= {I_{app}} - {g_L}(V - {V_L}) - {g_M}w(V - {V_K}) - {g_{Na}}m^3 h(V - {V_{Na}})\nonumber\\
	&~ - {g_K}{n^4}(V - {V_K}),\\
	\frac{{d\sigma}}{{dt}} &= \frac{1}{{{\tau _{\sigma}}(V)}}\left( {{{\sigma}}_\infty }(V) - \sigma \right),\quad \sigma\in\{w,h,n,m\}.\nonumber
\end{align}
	Parameter values and other details are given in the Appendix.

 Both the analytical results and  \textsf{MATCONT} give the bio-physically permissible BT point $(V^*,I_{app}^*,g_M^*)=(-63.7386,0.2449,.0659)$ and Cusp point  $(\widehat{V},\widehat{I}_{app},\widehat{g}_M)=(-50.8204,71.9395,14.5123)$, see  Figure~\ref{fig:RTM_BT}. 
Applying the analysis of section~\ref{sec:exist} again shows that increasing $g_L$ should lead to a BTC point.
This is confirmed in Figure \ref{fig:RTM_BTC_A}. When we increase $g_L$ to $13.79$, the BT and CP points collide producing the BTC point  $(-49.8762,166.25,-0.6745)$. 
 In this example, we notice that the range $(g_M^*,\widehat{g}_M)$  is bigger than those   in Example 1 and 2 but  
 the transition in the neuronal excitability type is consistent with previous examples:  Class-I when $g_M<g_M^*$ and Class-II otherwise, see Figures  \ref{fig:RTM_model_gMIapp} and \ref{fig:RTM_model__V_Iapp}.

 \begin{figure}[htb!]
\centering
\begin{subfigure}{.45\textwidth}
  \centering
  \includegraphics[width=0.95\linewidth]{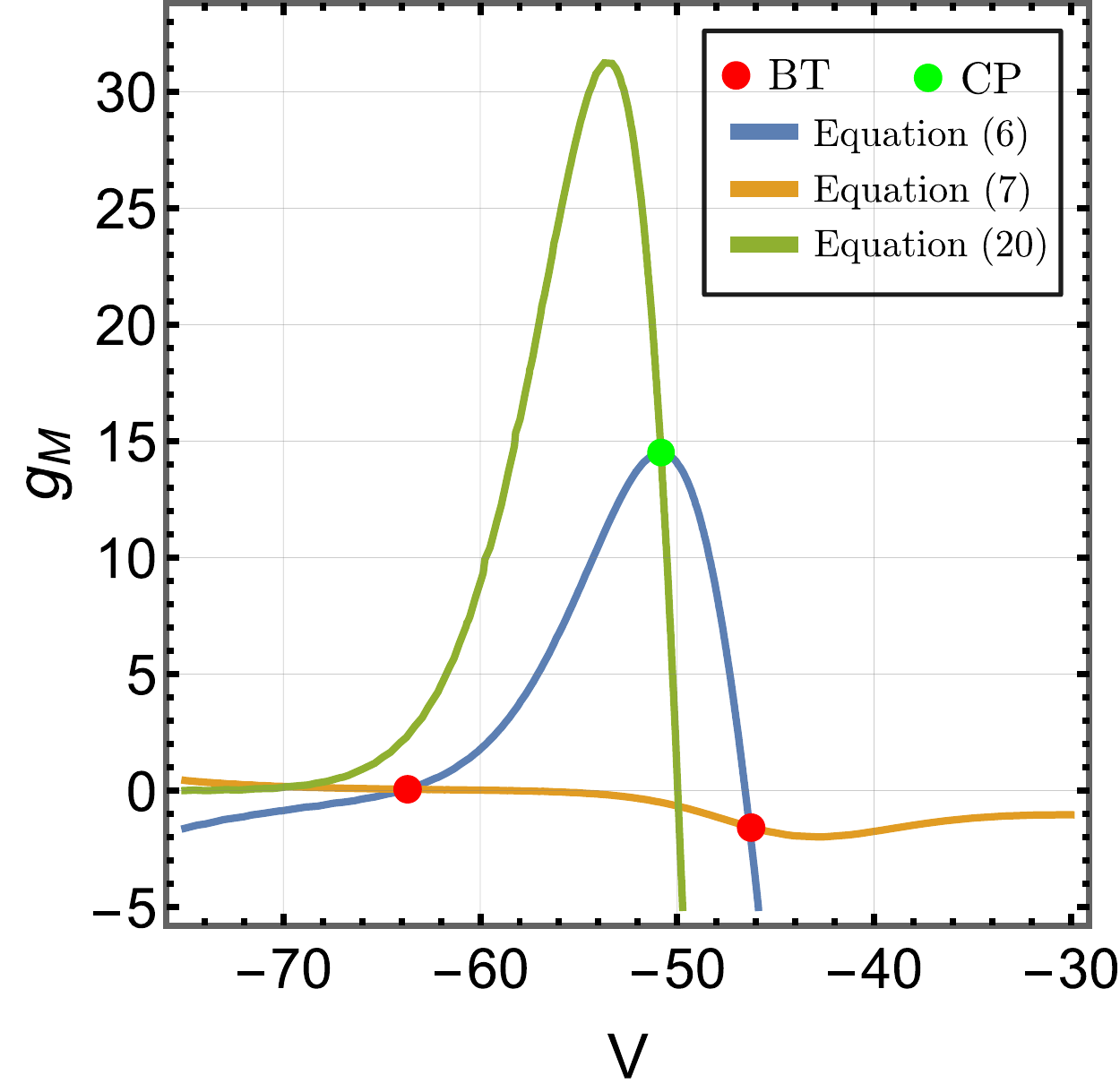}
 \caption{BT and CP points}
 \label{fig:RTM_BT}
\end{subfigure}
\begin{subfigure}{.45\textwidth}
  \centering
  \includegraphics[width=0.95\linewidth]{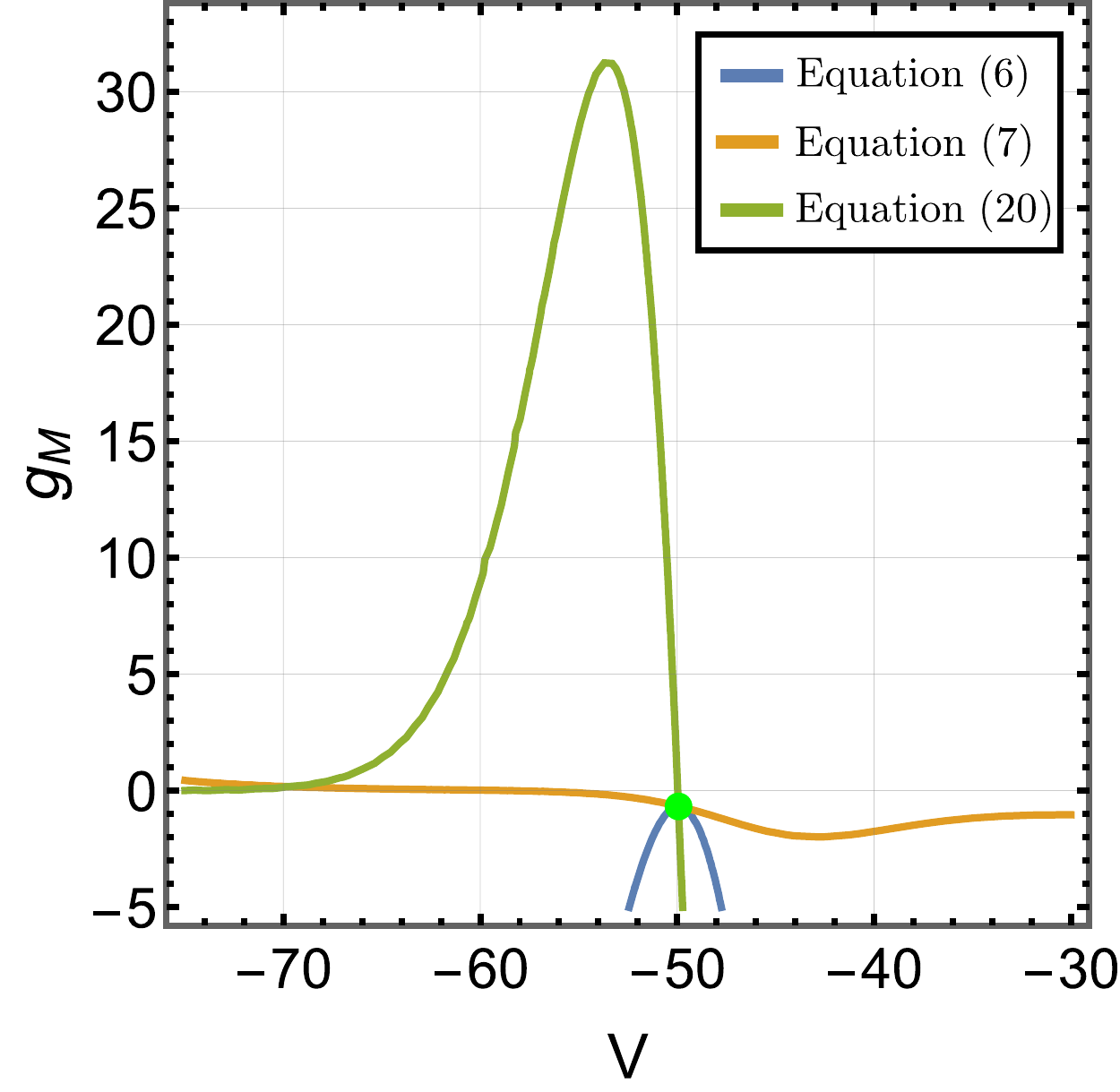}
 \caption{BTC point}
   \label{fig:RTM_BTC_A}
\end{subfigure}
\caption{Existence of codimension two and three bifurcation points in the Wang-Buz\'saki model (\ref{RTM_model}), with the parameter values given in Table \ref{RTM_parameters}. (a) The conditions given by equations~ \eqref{BT:Con_A1}, \eqref{BT:Con_A2} and \eqref{eq:dd_I_infter} are plotted in the $V,g_M$ space. The two intersection points (red dots) of the conditions in Theorem~\ref{Th:BTpoint} show that there are two BT points in the model. 
The one intersection point (green dot) of the conditions in Theorem \ref{Th:BTCpoint} shows the existence of one Cusp point; (b) The three conditions are plotted when the leak conductance is increased to $g_L=13.79$. The intersection point (green dot) corresponds to the BTC point.}
\label{fig:RTM_BT_CP}
\end{figure}

\begin{figure}[htb!]
\centering
\begin{subfigure}{.40\textwidth}
  \centering
  \includegraphics[width=0.95\linewidth]{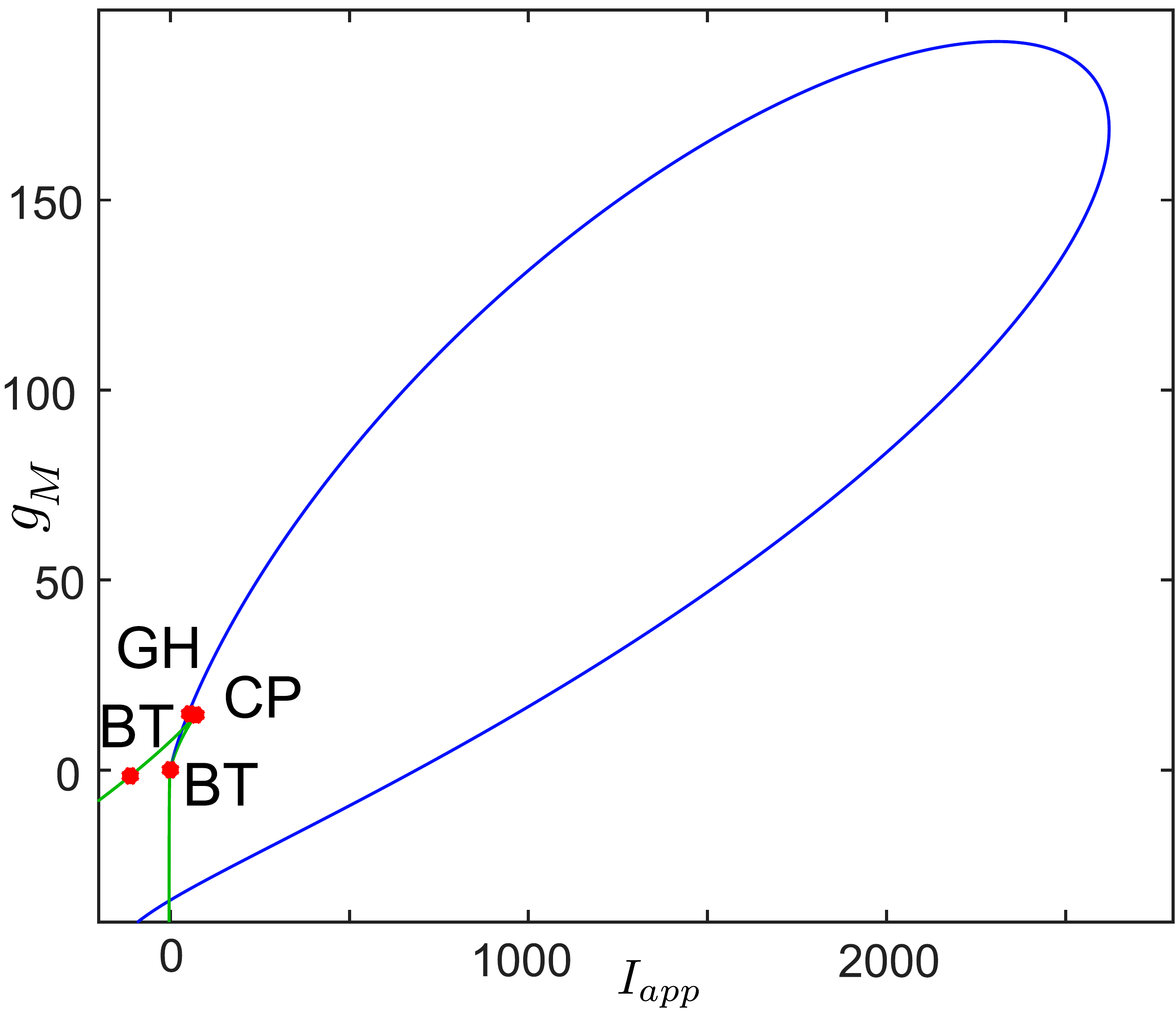}
    \caption{}
    \label{fig:RTM_model_gMIapp_a}
\end{subfigure}
\begin{subfigure}{.40\textwidth}
  \centering
  \includegraphics[width=0.95\linewidth]{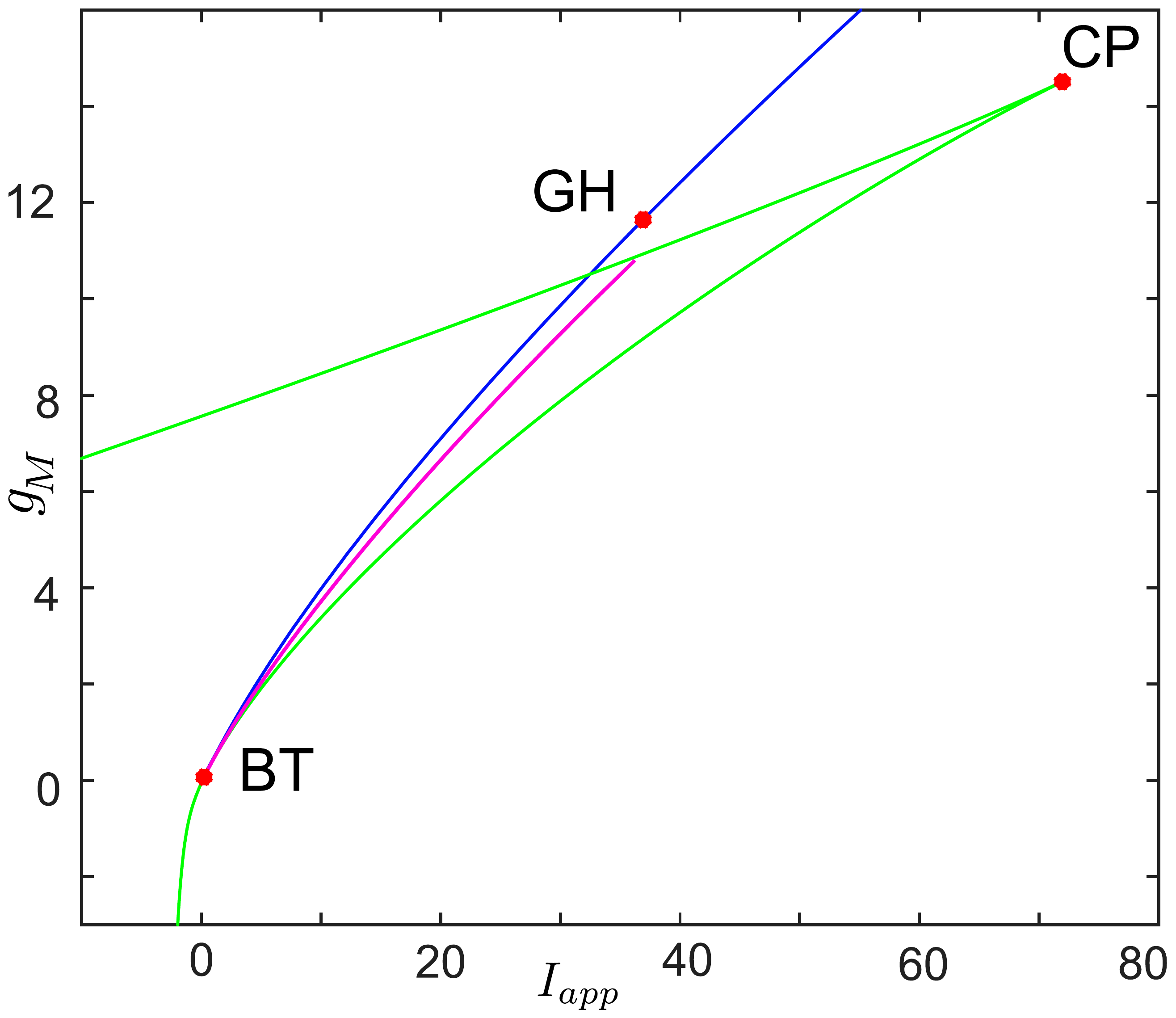}
   \caption{}
       \label{fig:RTM_model_gMIapp_b}
\end{subfigure}
\caption{Bifurcation diagram in $I_{app},g_M$ parameter space for the RTM model  (\ref{RTM_model}). Green curves are limit point (fold/saddle-node) bifurcations of equilibria, blue are Andronov-Hopf bifurcations, magenta are homoclinic bifurcations and red  are limit point (fold) bifurcations of limit cycles (LPC). Co-dimension two bifurcation point labels are described in Table~\ref{Table:labels}.}
\label{fig:RTM_model_gMIapp}
\end{figure}

 \begin{figure}[htb!]
\centering
\begin{subfigure}{.315\textwidth}
  \centering
  \includegraphics[width=0.95\linewidth]{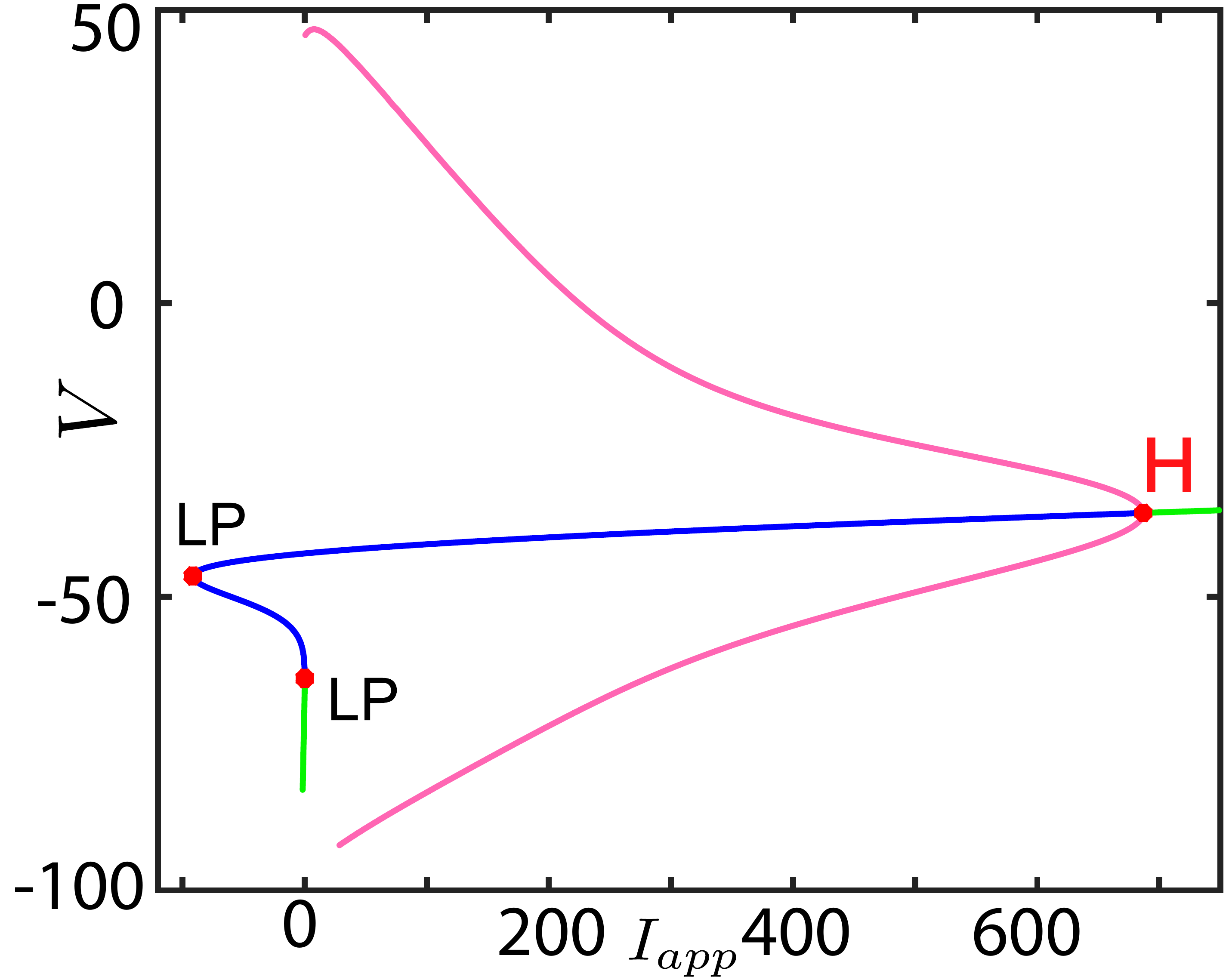}
    \caption{$g_M=0$}
    \label{fig:RTM_model_V_Iapp_a}
\end{subfigure}
\begin{subfigure}{.315\textwidth}
  \centering
  \includegraphics[width=0.95\linewidth]{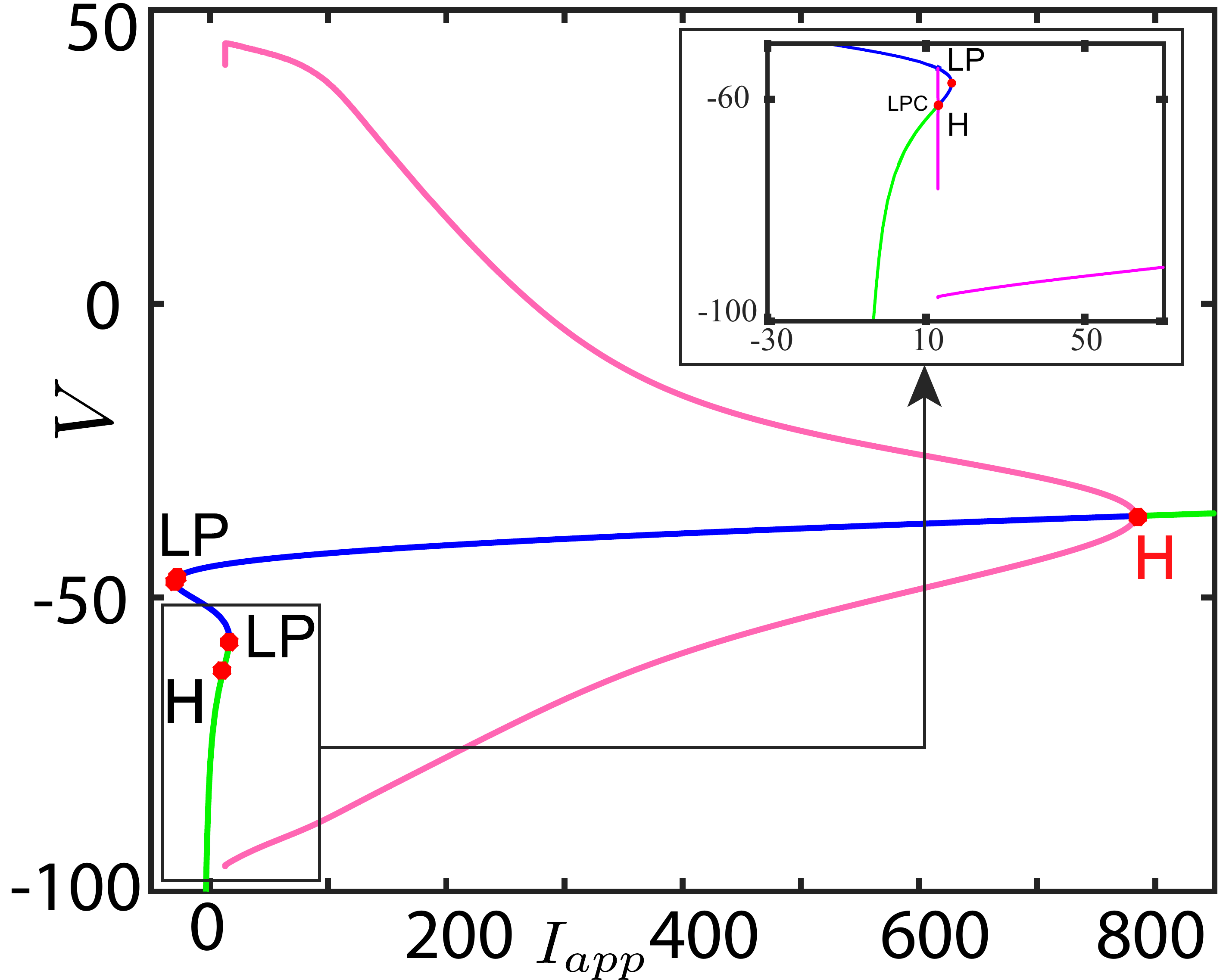}
   \caption{$g_M=5$}
       \label{fig:RTM_model_V_Iapp_b}
\end{subfigure}
\begin{subfigure}{.315\textwidth}
  \centering
  \includegraphics[width=0.95\linewidth]{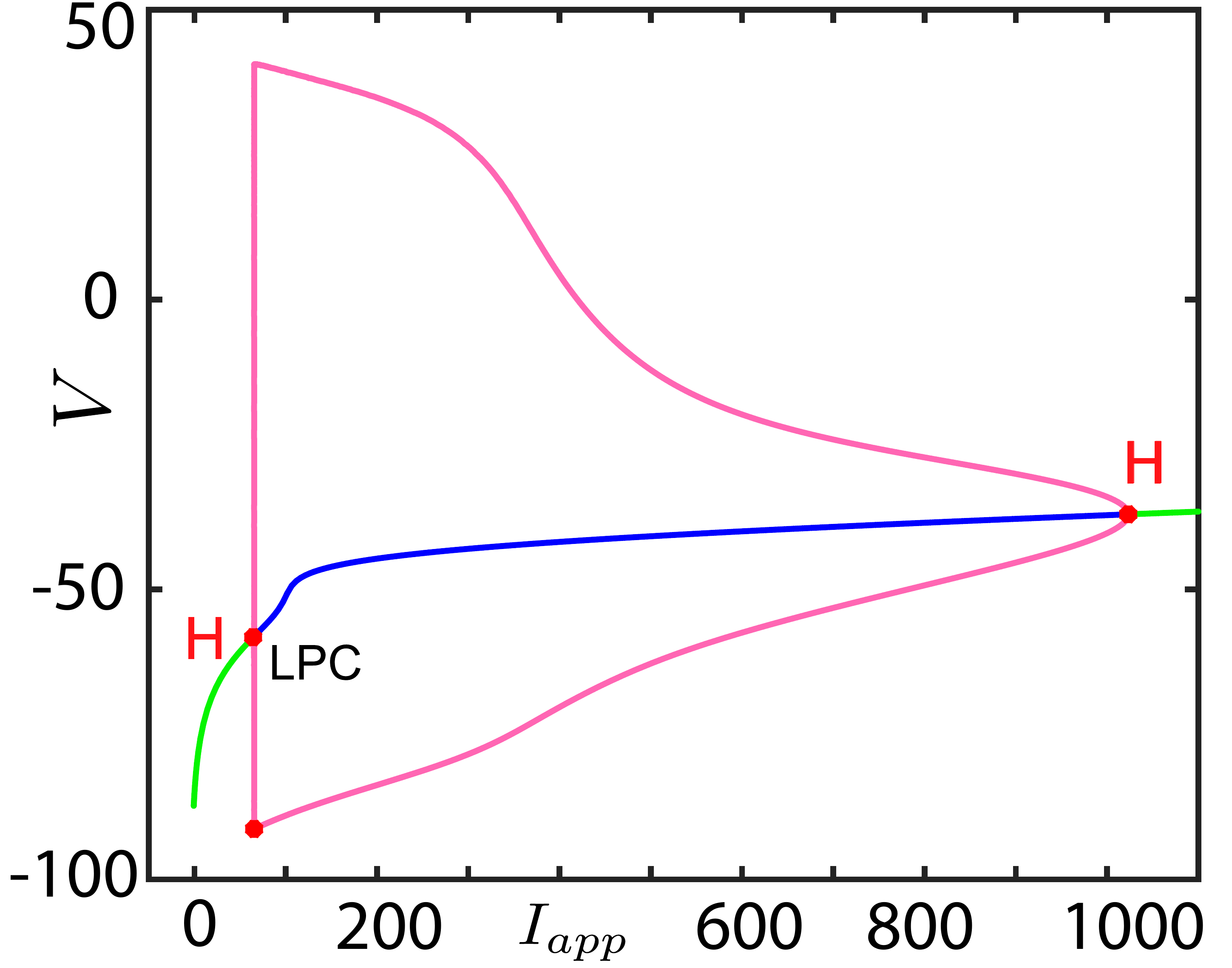}
   \caption{$g_M=18$}
       \label{fig:RTM_model_V_Iapp_c}
\end{subfigure}
\caption{One parameter bifurcation diagrams for the RTM model    (\ref{RTM_model}), showing the change in bifurcation structure as $g_M$ is varied. (a) $g_M<g_M^*$ (the value at the BT point); (b) $g_M^*<g_M<\widehat{g}_M$;
(c) $g_M>\widehat{g}_M$ (the value at the CP point).  Green/blue curves show stable/unstable equilibria. Pink curves show maxima/minima of periodic orbits. Co-dimension one bifurcation point labels are described in Table~\ref{Table:labels}.
}
\label{fig:RTM_model__V_Iapp}
\end{figure}

\section{Implications for Synchronization}
\label{sec:Implications}
In section~\ref{sec:exist} we have shown that the M-current will give rise to a BT bifurcation in any conductance-based neural model, when certain conditions are met. In  section~\ref{sec_Numerical} we showed in three examples that these conditions are met and a BT bifurcation occurs. Further, we showed that
this BT bifurcation induces a transition from Class-II to Class-I excitability in these models as the conductance of the M-current is decreased (as would be the case in the presence of acetylcholine).
In this section we will explore one implication of this transition.
There are many studies in the literature describing the relationship between the synchronization of coupled neurons and their neuronal excitability type, see e.g., \cite{ermentrout1996type,hansel1995synchrony}.  
The classic result is that the in-phase solution of a pair of weakly coupled Class-I oscillators model 
with synaptic coupling is stable when there are inhibitory coupling and unstable for excitatory 
coupling, while the anti-phase solution exhibits the opposite stability \cite{ermentrout1996type}. The synchronization of Class-II oscillators is less clear, and other factors such as the synaptic
time constants and firing frequency may affect these conclusions \cite{ermentrout1996type,stiefel2009effects}.
By in-phase solution, we mean both oscillators reach their highest peak at the same time,  whereas an anti-phase solution means one oscillator reaches its highest peak one half-period after the other oscillator. 

To study the stability of phase-locked solutions and the correspondence with the neuronal excitability type as $g_M$ varies, we write two coupled neurons  with synaptic coupling  as 
 \begin{neweq}
\label{coupled_N}
{C_m}\frac{{d{V_i}}}{{dt}} &= {I_{app}} - {g_L}({V_i} - {V_L}) - {g_M}w({V_i} - {V_K}) -I_{ion}(V)- {g_{syn}}{s_j}({V_i} - {V_{syn}})),\\
\frac{{dw}}{{dt}} &= \frac{{{1}}}{{{\tau _w}(V)}}\left( {{w_\infty }({V_i}) - w} \right),\\
\frac{{d{s_i}}}{{dt}} &={a_{{e_0}}} {a_e}(V) (1 - {s_i})-\frac{{  {s_i}}}{{{\tau _s}}},
 \end{neweq}
for $i,j=1,2$ such that $i\ne j$, where $I_{ion}$ are ionic currents in Examples 1-3. The synaptic coupling function and parameters are given Table \ref{Table:synaptic}. 
 
\begin{table}[hbt!]
\centering
\begin{tabular}{|l|c|c|c|c|}
\hline
          & ${a_{{e_0}}}$ & ${\tau _s}$  & ${a_e}(V)$& Reference \\ \hline
Example 1: $V_{syn}=0,-75$ & $6.25$ & $5$ & ${{{\left( {1 + \exp \left( {\frac{{ - V}}{2}} \right)} \right)}^{ - 1}}} $&\cite{skinner2005two}\\ \hline
Example 2: $V_{syn}=0,-80$  & $4$ & $8$ & ${{{\left( {1 + \exp \left( {\frac{{ - V}}{5}} \right)} \right)}^{ - 1}}} $ &\cite{stiefel2008cholinergic}\\ \hline
Example 3: $V_{syn}=0$& $5$ & $2$ & $(1 + \tanh(V/4))$& \cite{olufsen2003new}\\ \hline
Example 3: $V_{syn}=-80$& $2$ & $10$ & $(1 + \tanh(V/4))$ &\cite{olufsen2003new}\\ \hline
\end{tabular}
\caption{Synaptic coupling function and parameters in (\ref{coupled_N}).}
\label{Table:synaptic}
\end{table}

To determine  the stable phase-locked solution(s), first, we solve   (\ref{coupled_N}) numerically with ten random initial conditions at each step of $g_M$ then we calculate the period of the oscillators  ($T_1$ and $T_2$) in the numerical solution. Finally, we  approximate the phase shift as  
\beqn
\varphi  = 2\pi \left( {\frac{\tau }{\mathcal{T}} - \left\lfloor {\frac{\tau }{\mathcal{T}}} \right\rfloor } \right)
\label{appo}
\eeqn
where $\left\lfloor  \cdot  \right\rfloor$ is the floor function, $\mathcal{T}=(T_1+T_2)/2$ and $\tau$ is the argument shift satisfying $V_1(t)=V_2(t+\tau)$ for all $t$. 
Figure  \ref{fig:TWOneurons} shows a bifurcation diagrams for (\ref{coupled_N}) with excitatory and  inhibitory synaptic coupling in  Examples 1-3. 
For  instance, for coupled Wang-- Buzsaki model (Example 1), we notice in
Figure  \ref{fig:TWOneurons}a  that when $g_M<g_M^*$ (Class-I dynamics in (\ref{WB_model})), the in-phase solution is unstable  and the anti-phase solution is stable with excitatory coupling $V_{syn}=0$. 
The reverse is true for inhibitory coupling $V_{syn}=-75$.
This is consistent with \cite{ermentrout1996type}.
When there is an excitatory synaptic connection, as the M-current reaches $g_M\approx 0.5$, the anti-phase solution loses its stability  and two stable out-of-phase solutions (neither in-phase nor anti-phase) appear.  
As the conductance of the M-current  is increased any further, a stable in-phase solution appears. 
Hence, there is a transition from stable anti-phase solution to stable in-phase solution 
via stable out-of-phase solutions. 
The transition also occurs at $g_M\approx 0.5$ when there is the coupling is inhibitory.
We observe a similar dynamical behaviour in Examples 2 and 3, see Figure \ref{fig:TWOneurons}b-\ref{fig:TWOneurons}c, although the transition is not as clear in all cases. 
Although the relationship of the transition point to the codimension two bifurcations varies with
the different models, in all cases it occurs at some $g_M\in(g_M^*,\widehat{g}_M)$, that is, 
when the model has Class-II dynamics.

\begin{figure}[htb!]
  \centering
  \includegraphics[width=0.95\linewidth]{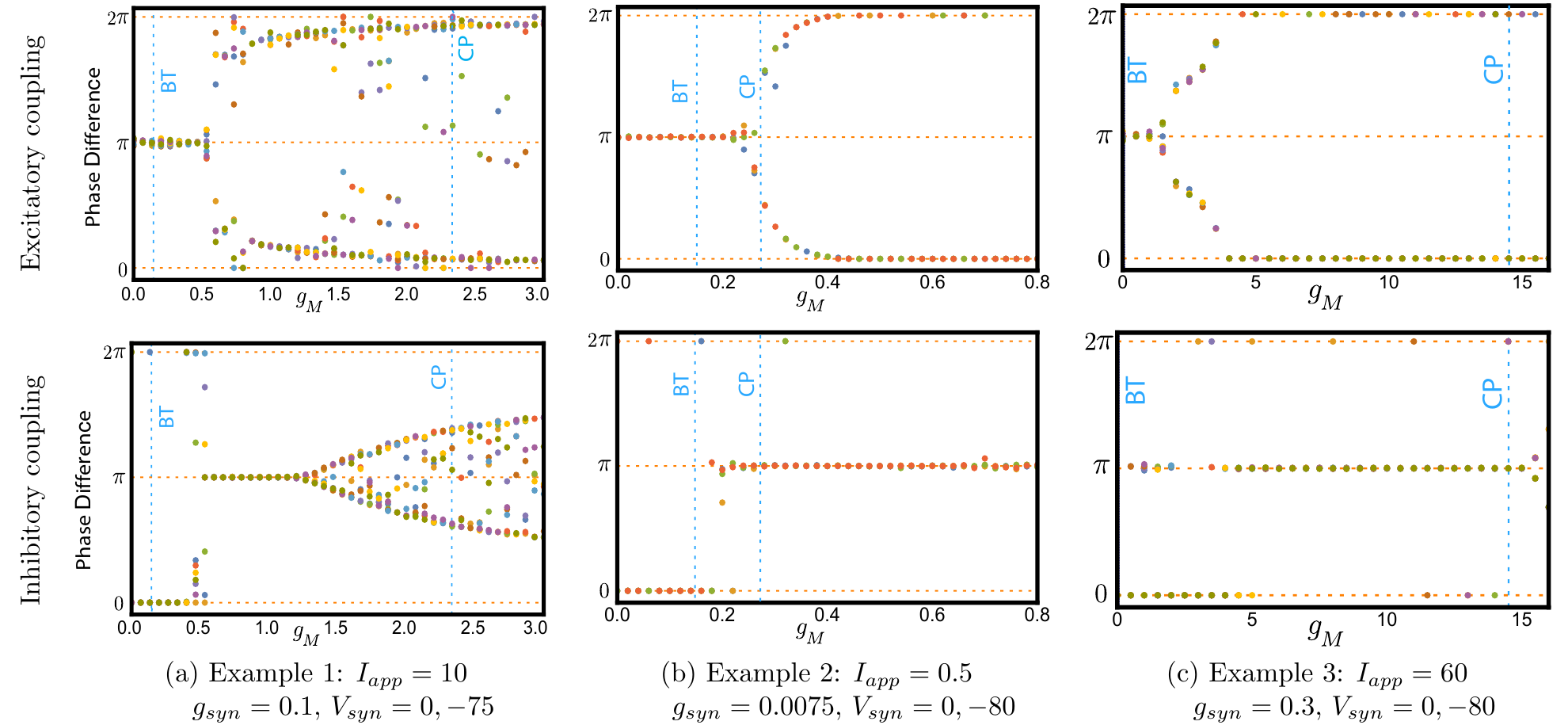}
   \caption{Bifurcation diagrams showing the change of synchronization of two identical, synaptically coupled neurons as $g_M$ is varied. For all examples, with $g_M<g_M^*$ (the BT point) excitatory coupling leads to phase-locking in anti-phase (phase difference $\pi$), while inhibitory coupling leads to in-phase (phase difference $0$). In all cases $g_M$ has to be increased significantly past the $BT$ value before the solution switches to in-phase for excitatory coupling and anti-phase for inhibitory coupling.}
 \label{fig:TWOneurons}
\end{figure}

As indicated above, other factors may affect the synchronization of neurons. 
We focus here on the firing frequency of the neuron.
In \cite{stiefel2009effects} it was shown that increasing the firing frequency 
by increasing the applied current could switch the PRC of a model neuron with an M-current
from Type-II to closer to Type-I.
In \cite{fink2011cellularly}, the authors reproduced this result for other neural models and studied how 
changing the firing frequency 
modulates the synchronization properties induced by the M-current. They found that synchrony in excitatory networks of neurons with a Type I PRC (low $g_M$) was largely unaffected by frequency modulation, whereas networks of Type II PRC neurons (high $g_M$) synchronized much better at lower frequencies. 
In \cite{ermentrout2010mathematical}, the authors  studied how the stability of in-phase and anti-phase phase-locked solutions in Wang--Buz\'saki model (with no M-current) varied with firing frequency.  
At low frequencies with inhibitory coupling, they showed that  both in-phase and anti-phase phase-locked solutions are stable. However, at higher frequencies only the in-phase solution  is stable. 
In contrast, with excitatory coupling, they showed that the in-phase solution  is unstable for both high and low frequencies. Recalling that the Wang--Buz\'saki model is a Class-I oscillator, this latter
result is consistent with that of \cite{fink2011cellularly}.

To consider if firing frequency has an effect in our results, we determined the variation of 
firing frequency with the conductance of the M-current, $g_M$, for our example models,
see Figure \ref{fig:model_F_g_M}. In all cases the firing frequency decreases rapidly as
$g_M$ increases. When the models are in the Class-I excitability regime (below the BT
point), the frequency
change does not affect the sychronization properties. This is consistent with the results described above~\cite{stiefel2009effects,fink2011cellularly}, given that neurons with Class-I excitability typically have Type-I PRCs~\cite{ermentrout1996type}.
Recalling that the main switch in synchronization behaviour in all cases occurs within 
the Class-II regime, we conclude that this switch is likely due to the decrease in the
frequency as $g_M$ increases.

In summary, while the excitability class of the model changes exactly at the BT point, the synchronization property of the models switches at $g_M$ value larger than the BT
point, when the frequency of the intrinsic oscillations is small enough.

\begin{figure}[htb!]
\centering
  \includegraphics[width=0.95\linewidth]{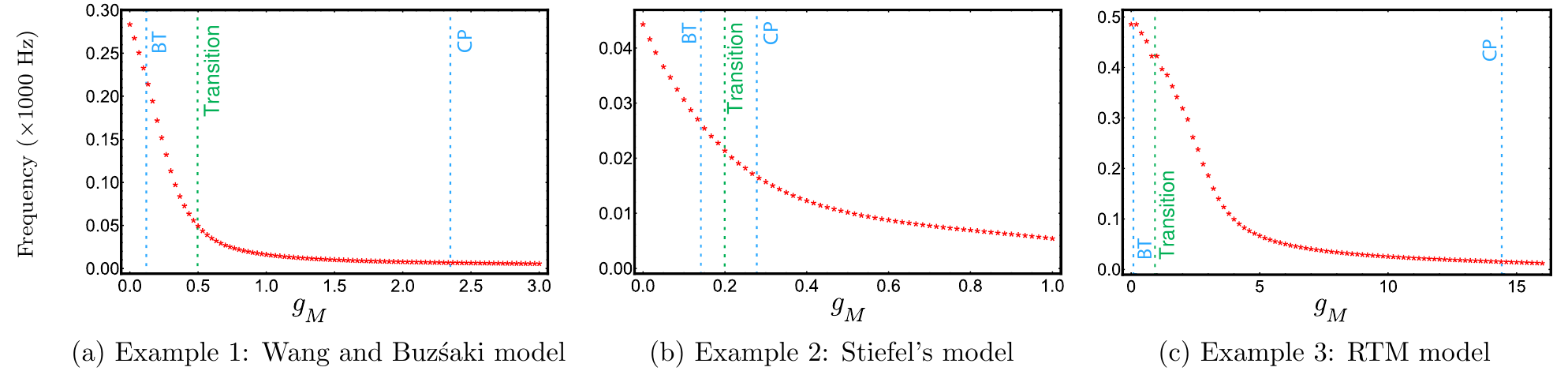}
    \caption{${\rm F/g_{M}}$ curve of the models in Examples 1-3. For each model neuron, the applied current was fixed at a value which yielded stable periodic solutions for all $g_M$ in the given range, then the frequency of the periodic solution was plotted against $g_M$. The blue dashed lines show the $g_M$ values corresponding to the BT and CP points. The green dashed line shows the $g_M$ value where the change in sychronization occurs for the coupled neurons in Figure~\ref{fig:TWOneurons}.}
\label{fig:model_F_g_M}
\end{figure}

 \section{Discussion}
 \label{sec_Conclusions}

In this paper, we studied Bogdanov-Takens (BT) bifurcation in  a general conductance-based neuron model with the inclusion of the M-current.
We started by showing the existence of equilibrium points. Then, we derived the necessary and sufficient conditions for the  equilibrium point to become a BT point. 
A degenerate Bogdanov-Takens (BTC) point appears when BT and Cusp points merge.
To discuss the occurrence of such point, we provided the condition for a Cusp bifurcation. We then showed that the conditions for the BT and Cusp bifurcation may be satisfied by varying the applied
current and the maximal conductance of the M-current and that for the BTC point by additionally varying the conductance of the leak current. 
 
As previously noted, our theoretical work was inspired by two recent papers. 
In \cite{kirst2015fundamental} they show that the BTC point can occur in any conductance-based model in the parameter space of the applied current, leak conductance and capacitance. They use this to study the effect of the leak current on the excitability properties of models for single neurons and synchronization properties for networks of neurons.
In \cite{pereira2015bogdanov} they study a general conductance-based neural model. They show that if the model has an equilibrium point with a double zero eigenvalue for some parameter values, then it is a BT point. Further, they give conditions on the gating variables and time constants for a BT bifurcation to occur. They propose the BT normal form as a generic minimal model for a single neuron.

Numerically, we applied our analytical results to three examples and compared them with the computations of \textsf{MATCONT}, a numerical bifurcation analysis toolbox in \textsf{Matlab}.   
Furthermore, we constructed  bifurcation  diagrams  using \textsf{MATCONT} to explain the possible behaviour of each example and discuss the switches in the neuronal excitability class with respect to the M-current $g_M$. As predicted by normal form theory~\cite{bognadov1975versal,takens1974singularities,Guck_Holmes,kuznetsov2013elements} in all examples a curve of homoclinic bifurcation, a curve of Hopf bifurcation  and a curve of saddle-node of equilibria emanate from the BT point. These latter two curves particularly affect the
neuronal excitability class. 
We found that a transition is determined by the BT point which occurs at $(g_M,I_{app})=(g_M^*,I_{app}^*)$. The model is a Class-I oscillator when $g_M<g_M^*$ and Class-II  when $g_M>g_M^*$.
More precisely, when $g_M<g_M^*$ as $I_{app}$ is increased oscillations with arbitrarily slow frequency appear via a saddle-node on invariant circle bifurcation  while when $g_M>g_M^*$ oscillations with a positive frequency
appear via a fold bifurcation of cycles, followed by a subcritical Hopf bifurcation.

Using systems of two synaptically coupled cells, we explored how the change in excitability class with the variation of $g_M$ affects synchronization in the example models. We found that while the excitability class of the model changes exactly at the BT point, the synchronization property of all the models switches at $g_M$ value larger than the BT point. We attributed this to the change to the fact that the M-current also affects the frequency of the intrinsic oscillations and that the synchronization of class II oscillators has been shown to be sensitive to intrinsic frequency. Thus the necessary condition for the switch of synchronization, we observed is that the system be class II and the frequency be sufficiently small.

We also considered the effect of the leak conductance, $g_L$, showing that, in the examples we considered, increasing $g_L$ decreases the $g_M$ value of the BT point. This means that the range of values of $g_M$ where the model has class I excitability will be decreased. Equivalently, smaller changes of $g_M$ are needed to switch the model from class I to class II.  If $g_L$ is increased enough then $g_M^*$ may become negative, in which case the model will exhibit class II excitability regardless of the value of $g_M$. Since the switch of synchronization occurs at a higher value of $g_M$ than the BT point, this does not necessarily mean the system will not exhibit changes in synchronization associated with a change in $g_M$, it just means that smaller changes in $g_M$ are needed to switch the synchronization property. We note that Prescott et al.~\cite{prescott2006nonlinear,prescott2008pyramidal} represented the increase in membrane conductance due to background synaptic input using a leak current with a reversal potential near rest in a Morris-Lecar model with an M-current. The one parameter bifurcation diagrams in \cite{prescott2008pyramidal} are consistent with what we have seen in  our analysis.
Our analysis of the effect of $g_L$ on the BT point relies on understanding how the intersection points of two curves vary with $g_L$. Only one curve depends on $g_L$ and we can show in general (i.e., for any model) that the curve will move downward as $g_L$ increases. This effect depends on two aspects of the M-current: the reversal potential is a large negative
value (since it is a potassium current) and the current is noninactivating, see eq. \eqref{gMstar}.

The implications of these results for the action of acetylcholine are as follows. If the neuron is of Class-II in the absence of
acetylcholine (corresponding to high $g_M$) then the presence of
acetylcholine may push the system past the BT bifurcation point
and change the neural excitability type to Class-I. The expected synchronization in the presence of sufficient acetylcholine is then clear: neurons with excitable coupling will likely desynchronize while those with inhibitory coupling will synchronize. This is consistent with the changes to the PRCs induced by acetylcholine
observed in \cite{stiefel2008cholinergic}.
Whether or not acetylcholine induces a change in synchronization may depend on intrinsic firing frequencies of cells. 
Expanding on the idea of Prescott et al.\cite{prescott2006nonlinear,prescott2008pyramidal}, an increase in membrane input conductance would make the system more sensitive to the effects of acetylcholine,
so that switches of synchronization could occur more easily.

These conclusions, of course, assume that the only affect of acetylcholine is to down-regulate the M-current. However, acetycholine has been observed to have other effects, including down-regulating an afterhyperpolarization current $I_{AHP}$ \cite{madison1987voltage,mccormick1993actions} and the leak current \cite{stiefel2008cholinergic}. As indicated above, our work indicates that decreasing $g_L$ will increase the value of $g_M^*$. Thus the simultaneous downregulation of the leak and M-currents would cause the switch of excitability class at higher values of $g_M$. The net effect would be to increase the sensitivity of the model to acetylcholine.
We leave the exploration of the effect of the $I_{AHP}$ current for future work.

The effect of acetylcholine, through the M-current, on the synchronization of cells has been explored using numerical simulations and phase response curves \cite{stiefel2008cholinergic,stiefel2009effects,fink2011cellularly,ermentrout2010mathematical}. We have linked these effects to a particular bifurcation structure of conductance-based models with an M-current and given conditions for this to occur in any conductance-based model. This approach allows us to generalize previous results and to easily explore the effect of multiple parameters in these models.

\section*{Appendix: Parameters, Units, and Functions in Section \ref{sec_Numerical}}

 \label{sec_App}
\begin{itemize}
    \item \textbf{Example 1: Wang--Buz\'saki model.} The infinity and $\tau_{\sigma}$, $\sigma\in\{m,h,n\}$, functions are
	\begin{neweq_non}
		~&{m_\infty }(V) = \frac{{{\alpha_m}(V)}}{{{\alpha_m}(V) + {\beta_m}(V)}},\quad {h_\infty }(V) = \frac{{{\alpha_h}(V)}}{{{\alpha_h}(V) + {\beta_h}(V)}},\\
		~&{n_\infty }(V) = \frac{{{\alpha_n}(V)}}{{{\alpha_n}(V) + {\beta_n}(V)}},\quad {w_\infty }(V) =\frac{1}{{{e^{ - \frac{{V + 27}}{7}}} + 1}} ,\\
	~&{\tau _w}(V) = \frac{1}{0.003\left(e^{\frac{V+63}{15}}+e^{\frac{-(V+63)}{15}}\right)},\quad{\tau _h}(V) = \frac{1}{{{\alpha_h(V)} + {\beta_h(V)}}},\\
	~&	{\tau _n}(V) = \frac{1}{{{\alpha_n(V)} + {\beta_n(V)}}},
	\end{neweq_non}
	where the rate constants $\alpha_{\sigma}$ and $\beta_{\sigma}$ are:
	\begin{neweq_non}
		~&{\alpha _m}(V) =  - \frac{{0.1(V + 35)}}{{{e^{ - 0.1(V + 35)}} - 1}},\quad{\alpha _h}(V) = 0.07{e^{ - \frac{{V + 58}}{{20}}}},\\
		~&{\alpha _n}(V) =  - \frac{{0.01(V + 34)}}{{{e^{ - 0.1(V + 34)}} - 1}},\quad \quad{\beta _n}(V) = 0.125{e^{ - \frac{{V + 44}}{{80}}}},\\
		~&{\beta _m}(V) = 4{e^{ - \frac{{V + 60}}{{18}}}},\quad{\beta _h}(V) = \frac{1}{{{e^{ - 0.1(V + 28)}} + 1}}.
	\end{neweq_non}
	Parameter values are listed in Table \ref{WB_parameters}.

	\begin{table}[htb!]
		\centering
		\begin{tabular}{|c|c|c|c|}
			\hline
			Conductance ($m\mathrm{S}/\mathrm{cm}^2$)& Reversal potential ($m\mathrm{V}$)& Capacitance ($\mu\mathrm{F}/\mathrm{cm}$) & Others  \\ \hline
			$g_L=0.1$      & $V_L=-65$             & $C_M=1$    & $\phi=5$ \\
			$g_{Na}=35$   & $V_{Na}=55$          &             &         \\
			$g_{K}=9$    & $V_K=-90$          &             &         \\ \hline
		\end{tabular}
		\caption{Parameter values for Example 1: Wang--Buz\'saki model (\ref{WB_model}).}
		\label{WB_parameters}
	\end{table}

        \item \textbf{Example 2:  Stiefel model.} The functions are 
\begin{neweq_non}
		~&{m_\infty }(V) = \frac{1}{e^{\frac{-(V+30)}{9.5}}+1},\quad {w_\infty }(V) = \frac{1}{e^{\frac{-(V+39)}{5}}+1},\\
		~& {h_\infty }(V) = \frac{1}{e^{\frac{V+53}{7}}+1},\quad \tau_{w}(V)=75,\\
		~&{n_\infty }(V) = \frac{1}{e^{\frac{-(V+30)}{10}}+1},\quad \tau_{h}(V)=0.37+\frac{2.78}{e^{\frac{V+40.5}{6}}+1},\\ ~&\tau_{n}(V)=0.37+\frac{1.85}{e^{\frac{V+27}{15}}+1}.
	\end{neweq_non}

	\begin{table}[htb!]
		\centering
		\begin{tabular}{|c|c|c|c|}
			\hline
			Conductance ($m\mathrm{S}/\mathrm{cm}^2$)& Reversal potential ($m\mathrm{V}$)& Capacitance ($\mu\mathrm{F}/\mathrm{cm}$) & Others  \\ \hline
			$g_L=0.02$      & $V_L=-60$             & $C_M=1$    & $\phi_w=1$ \\
			$g_{Na}=24$   & $V_{Na}=55$          &             &   $\phi_h=1$      \\
			$g_{K}=3$    &    $V_K=-90$     &             &    $\phi_n=1$     \\  \hline
		\end{tabular}
		\caption{Parameter values for Example 2: Stiefel model.}
		\label{Stiefel_parameters}
	\end{table}

        \item \textbf{Example 3: Reduced Traub-Miles model.} 
	The infinity and $\tau_{\sigma}$, $\sigma\in\{m,h,n,w\}$, 
	functions are
	\begin{neweq_non}
		~&{m_\infty }(V) = \frac{{{\alpha_m}(V)}}{{{\alpha_m}(V) + {\beta_m}(V)}},\quad {h_\infty }(V) = \frac{{{\alpha_h}(V)}}{{{\alpha_h}(V) + {\beta_h}(V)}},\\
		~&{n_\infty }(V) = \frac{{{\alpha _n}(V)}}{{{\alpha _n}(V) + {\beta _n}(V)}},\quad {w_\infty }(V) = \frac{1}{{{e^{\frac{{ - \left( {V + 35} \right)}}{{10}}}} + 1}},\\
		~& {\tau _w}(V) = \frac{{400}}{{3.3{e^{\frac{{V + 35}}{{20}}}} + {e^{\frac{{ - (V + 35)}}{{20}}}}}},\quad {\tau _n}(V) = \frac{1}{{{\alpha_n(V)} + {\beta_n(V)}}},\\
		~&{\tau _m}(V) = \frac{1}{{{\alpha_m(V)} + {\beta_m(V)}}}\quad{\tau _h}(V) = \frac{1}{{{\alpha_h(V)} + {\beta_h(V)}}},
	\end{neweq_non}
	where the rate constants $\alpha_{\sigma}$ and $\beta_{\sigma}$ are:
	\begin{neweq_non}
		~&{\alpha _m}(V) = \frac{{0.32(V + 54)}}{{1 - {e^{ - \frac{{V + 54}}{4}}}}},\quad {\alpha _h}(V) = 0.128{e^{ - \frac{{V + 50}}{{18}}}},\\
		~& {\alpha _n}(V) = \frac{{0.032(V + 52)}}{{1 - {e^{ - \frac{{V + 52}}{5}}}}}\quad {\beta _n}(V) = 0.5{e^{ - \frac{{V + 5}}{{40}}}},\\
		~&{\beta _m}(V) = \frac{{0.28(V + 27)}}{{{e^{\frac{{V + 27}}{5}}} - 1}},\quad {\beta _h}(V) = \frac{4}{{{e^{ - \frac{{V + 27}}{5}}} + 1}}.
	\end{neweq_non}

	\begin{table}[htb!]
		\centering
		\begin{tabular}{|c|c|c|}
			\hline
			Conductance ($m\mathrm{S}/\mathrm{cm}^2$)& Reversal potential ($m\mathrm{V}$)& Capacitance ($\mu\mathrm{F}/\mathrm{cm}$)   \\ \hline
			$g_L=0.1$      & $V_L=-67$             & $C_M=1$    \\
			$g_{Na}=100$   & $V_{Na}=50$          &                     \\
			$g_{K}=80$    & $V_K=-100$          &                     \\ \hline
		\end{tabular}
		\caption{Parameter values for Example 3: RTM model (\ref{RTM_model}).}
		\label{RTM_parameters}
	\end{table}

\end{itemize}

\bibliographystyle{ieeetr}
\bibliography{references}

\begin{thebibliography}{10}

\bibitem{stiefel2008cholinergic}
K.~M. Stiefel, B.~S. Gutkin, and T.~J. Sejnowski, ``Cholinergic neuromodulation
  changes phase response curve shape and type in cortical pyramidal neurons,''
  {\em PloS one}, vol.~3, no.~12, p.~e3947, 2008.

\bibitem{lawrence2006somatodendritic}
J.~J. Lawrence, F.~Saraga, J.~F. Churchill, J.~M. Statland, K.~E. Travis, F.~K.
  Skinner, and C.~J. McBain, ``Somatodendritic {K}v7/{KCNQ}/{M} channels
  control interspike interval in hippocampal interneurons,'' {\em J.
  Neurosci.}, vol.~26, no.~47, pp.~12325--12338, 2006.

\bibitem{mccormick1993actions}
D.~A. McCormick, ``Actions of acetylcholine in the cerebral cortex and thalamus
  and implications for function,'' {\em Prog. Brain Res.}, vol.~98,
  pp.~303--308, 1993.

\bibitem{hodgkin1948}
A.~Hodgkin, ``The local electric charges associated with repetitive action in a
  non-medullated axon,'' {\em J. Physiology}, vol.~107, pp.~165--181, 1948.

\bibitem{izhikevich2007dynamical}
E.~M. Izhikevich, {\em Dynamical Systems in Neuroscience}.
\newblock Cambridge, MA: MIT press, 2007.

\bibitem{canavier2006phase}
C.~C. Canavier, ``Phase response curve,'' {\em Scholarpedia}, vol.~1, no.~12,
  p.~1332, 2006.

\bibitem{ermentrout2010mathematical}
G.~B. Ermentrout and D.~H. Terman, {\em Mathematical Foundations of
  Neuroscience}, vol.~35.
\newblock New York: Springer Science \& Business Media, 2010.

\bibitem{hansel1995synchrony}
D.~Hansel, G.~Mato, and C.~Meunier, ``Synchrony in excitatory neural
  networks,'' {\em Neural computation}, vol.~7, no.~2, pp.~307--337, 1995.

\bibitem{brown2009neural}
D.~A. Brown and G.~M. Passmore, ``Neural kcnq (kv7) channels,'' {\em Br. J.
  Pharmacol.}, vol.~156, no.~8, pp.~1185--1195, 2009.

\bibitem{hu2002two}
H.~Hu, K.~Vervaeke, and J.~F. Storm, ``Two forms of electrical resonance at
  theta frequencies, generated by m-current, h-current and persistent na+
  current in rat hippocampal pyramidal cells,'' {\em J. Physiol.}, vol.~545,
  no.~3, pp.~783--805, 2002.

\bibitem{prescott2006nonlinear}
S.~A. Prescott, S.~Ratt{\'e}, Y.~De~Koninck, and T.~J. Sejnowski, ``Nonlinear
  interaction between shunting and adaptation controls a switch between
  integration and coincidence detection in pyramidal neurons,'' {\em Journal of
  Neuroscience}, vol.~26, no.~36, pp.~9084--9097, 2006.

\bibitem{prescott2008pyramidal}
S.~A. Prescott, S.~Ratt{\'e}, Y.~De~Koninck, and T.~J. Sejnowski, ``Pyramidal
  neurons switch from integrators in vitro to resonators under in vivo-like
  conditions,'' {\em Journal of neurophysiology}, vol.~100, no.~6,
  pp.~3030--3042, 2008.

\bibitem{fink2011cellularly}
C.~G. Fink, V.~Booth, and M.~Zochowski, ``Cellularly-driven differences in
  network synchronization propensity are differentially modulated by firing
  frequency,'' {\em PLoS Comput. Biol.}, vol.~7, no.~5, p.~e1002062, 2011.

\bibitem{zhou2018m}
Y.~Zhou, T.~Vo, H.~G. Rotstein, M.~M. McCarthy, and N.~Kopell, ``M-current
  expands the range of gamma frequency inputs to which a neuronal target
  entrains,'' {\em J. Math. Neurosci.}, vol.~8, no.~1, p.~13, 2018.

\bibitem{chartove2020biophysical}
J.~A. Chartove, M.~M. McCarthy, B.~R. Pittman-Polletta, and N.~J. Kopell, ``A
  biophysical model of striatal microcircuits suggests delta/theta-rhythmically
  interleaved gamma and beta oscillations mediate periodicity in motor
  control,'' {\em PLoS Comput. Biol.}, vol.~16, no.~2, p.~e1007300, 2020.

\bibitem{guckenheimer1993bifurcation}
J.~Guckenheimer and J.~Labouriau, ``Bifurcation of the {H}odgkin and {H}uxley
  equations: a new twist,'' {\em Bull. Math. Biol.}, vol.~55, no.~5, p.~937,
  1993.

\bibitem{mohieddine2008chaos}
R.~Mohieddine, {\em Chaos in the {H}odgkin-{H}uxley Equations: The
  {T}akens-{B}ogdanov Cusp Bifurcation}.
\newblock PhD thesis, Cornell University, 2008.

\bibitem{tsumoto2006bifurcations}
K.~Tsumoto, H.~Kitajima, T.~Yoshinaga, K.~Aihara, and H.~Kawakami,
  ``Bifurcations in {M}orris--{L}ecar neuron model,'' {\em Neurocomputing},
  vol.~69, no.~4-6, pp.~293--316, 2006.

\bibitem{Mor_Lec}
C.~Morris and H.~Lecar, ``Voltage oscillations in the barnacle giant muscle
  fibre,'' {\em Biophysical Journal}, vol.~35, pp.~193--213, 1981.

\bibitem{wang1996gamma}
X.-J. Wang and G.~Buzs{\'a}ki, ``Gamma oscillation by synaptic inhibition in a
  hippocampal interneuronal network model,'' {\em Journal of neuroscience},
  vol.~16, no.~20, pp.~6402--6413, 1996.

\bibitem{kirst2015fundamental}
C.~Kirst, J.~Ammer, F.~Felmy, A.~Herz, and M.~Stemmler, ``Fundamental structure
  and modulation of neuronal excitability: Synaptic control of coding,
  resonance, and network synchronization,'' {\em bioRxiv}, p.~022475, 2015.

\bibitem{pereira2015bogdanov}
U.~Pereira, P.~Coullet, and E.~Tirapegui, ``The {B}ogdanov--{T}akens normal
  form: a minimal model for single neuron dynamics,'' {\em Entropy}, vol.~17,
  no.~12, pp.~7859--7874, 2015.

\bibitem{rinzel1998analysis}
J.~Rinzel and G.~B. Ermentrout, ``Analysis of neural excitability and
  oscillations,'' {\em Methods in neuronal modeling}, vol.~2, pp.~251--292,
  1998.

\bibitem{ermentrout1996type}
B.~Ermentrout, ``Type i membranes, phase resetting curves, and synchrony,''
  {\em Neural Comput}, vol.~8, no.~5, pp.~979--1001, 1996.

\bibitem{Guck_Holmes}
J.~Guckenheimer and P.~Holmes, {\em Nonlinear Oscillations, Dynamical Systems
  and Bifurcations of Vector Fields}.
\newblock New York: Springer-Verlag, 1983.

\bibitem{kuznetsov2005practical}
Y.~A. Kuznetsov, ``Practical computation of normal forms on center manifolds at
  degenerate {B}ogdanov--{T}akens bifurcations,'' {\em Int. J. Bifurcation
  Chaos}, vol.~15, no.~11, pp.~3535--3546, 2005.

\bibitem{kuznetsov2013elements}
Y.~A. Kuznetsov, {\em Elements of Applied Bifurcation Theory}.
\newblock New York: Springer Science \& Business Media, 2013.

\bibitem{bernstein2009matrix}
D.~S. Bernstein, {\em Matrix Mathematics: Theory, Facts, and Formulas}.
\newblock Princeton, NJ: Princeton university press, 2009.

\bibitem{bognadov1975versal}
R.~Bognadov, ``Versal deformations of a singular point on the plane in the case
  of zero eigenvalues,'' {\em Funct Anal Appl}, vol.~9, pp.~144--145, 1975.

\bibitem{takens1974singularities}
F.~Takens, ``Singularities of vector fields,'' {\em Publications
  Math{\'e}matiques de l'Institut des Hautes {\'E}tudes Scientifiques},
  vol.~43, no.~1, pp.~47--100, 1974.

\bibitem{dumortier2006bifurcations}
F.~Dumortier, R.~Roussarie, J.~Sotomayor, and H.~Zoladek, {\em Bifurcations of
  planar vector fields: Nilpotent Singularities and Abelian Integrals}.
\newblock Berlin: Springer-Verlag, 2006.

\bibitem{dhooge2003matcont}
A.~Dhooge, W.~Govaerts, and Y.~A. Kuznetsov, ``{MATCONT}: a {MATLAB} package
  for numerical bifurcation analysis of {ODE}s,'' {\em ACM Trans Math Softw},
  vol.~29, no.~2, pp.~141--164, 2003.

\bibitem{stiefel2009effects}
K.~M. Stiefel, B.~S. Gutkin, and T.~J. Sejnowski, ``The effects of cholinergic
  neuromodulation on neuronal phase-response curves of modeled cortical
  neurons,'' {\em J. Comput. Neurosci.}, vol.~26, no.~2, pp.~289--301, 2009.

\bibitem{traub1991neuronal}
R.~D. Traub and R.~Miles, {\em Neuronal Networks of the Hippocampus}, vol.~777.
\newblock New York: Cambridge University Press, 1991.

\bibitem{olufsen2003new}
M.~S. Olufsen, M.~A. Whittington, M.~Camperi, and N.~Kopell, ``New roles for
  the gamma rhythm: population tuning and preprocessing for the beta rhythm,''
  {\em J Comput Neurosci}, vol.~14, no.~1, pp.~33--54, 2003.

\bibitem{skinner2005two}
F.~K. Skinner, H.~Bazzazi, and S.~A. Campbell, ``Two-cell to n-cell
  heterogeneous, inhibitory networks: precise linking of multistable and
  coherent properties,'' {\em J Comput Neurosci}, vol.~18, no.~3, pp.~343--352,
  2005.

\bibitem{madison1987voltage}
D.~Madison, B.~Lancaster, and R.~Nicoll, ``Voltage clamp analysis of
  cholinergic action in the hippocampus,'' {\em Journal of Neuroscience},
  vol.~7, no.~3, pp.~733--741, 1987.

\end{thebibliography}

\end{document}